\newcommand*{\addFileDependency}[1]{
  \typeout{(#1)}
  \@addtofilelist{#1}
  \IfFileExists{#1}{}{\typeout{No file #1.}}
}
\newtheorem{theorem}{Theorem}[section]
\newtheorem{lemma}[theorem]{Lemma}
\newtheorem{definition}[theorem]{Definition}
\newtheorem{example}[theorem]{Example}
\newtheorem{corollary}{Corollary}
\newtheorem{proposition}{Proposition}
\newtheorem{remark}{Remark}
\newcommand{\RR}{\mathbb{R}}
\newcommand{\R}{\mathbb{R}}
\newcommand{\Tn}{\mathcal{U}_m}
\newcommand{\Trop}{\text{Trop}}
\newcommand{\trtor}{\mathbb{R}^e/\mathbb{R}{\bf 1}}
\DeclareMathOperator*{\argmin}{arg\,min}
\DeclareMathOperator*{\argmax}{arg\,max}
\definecolor{mygreen}{rgb}{0, 0.82 , 0.0}
\definecolor{myorange}{rgb}{0.85, 0.5, 0.0}
\definecolor{mymagenta}{rgb}{1,0,1}
\definecolor{mycyan}{rgb}{0.0, 1.0, 1.0}
\definecolor{mypink}{rgb}{0.97, 0.68, 0.64}
\definecolor{myblue}{rgb}{0.57, 0.61, 0.90}
\definecolor{mylightgreen}{rgb}{0.66, 1, 0.66}
\definecolor{mygrey}{rgb}{0.6, 0.6, 0.6}
\title{Tropical Logistic Regression Model on Space of Phylogenetic Trees}
\author{Georgios Aliatimis \and Ruriko Yoshida \and Burak Boyaci \and James A. Grant}
\date{}
\begin{document}

\maketitle

\abstract{
\noindent
Classification of gene trees is an important task both in the analysis of multi-locus phylogenetic data, and assessment of the convergence of Markov Chain Monte Carlo (MCMC) analyses used in Bayesian phylogenetic tree reconstruction.
The logistic regression model is one of the most popular classification models in statistical learning, thanks to its computational speed and interpretability.
However, it is not appropriate to directly apply the standard logistic regression model to a set of phylogenetic trees, as the space of phylogenetic trees is non-Euclidean and thus contradicts the standard assumptions on covariates.
\\
It is well-known in tropical geometry and phylogenetics that the space of phylogenetic trees is a tropical linear space in terms of the max-plus algebra.  
Therefore, in this paper, we propose an analogue approach of the logistic regression model in the setting of tropical geometry.

Our proposed method outperforms classical logistic regression in terms of {\color{black}Area under the ROC Curve (AUC)} in numerical examples, including with
data generated by the multi-species coalescent model.
Theoretical properties such as statistical consistency 
have been proved and generalization error rates have been 
derived. 
Finally, our classification algorithm is proposed as 
an MCMC convergence criterion for \texttt{Mr Bayes}. 
Unlike {\color{black}the} convergence metric {\color{black}used by \texttt{Mr Bayes}} which is only dependent on 
tree topologies, our method is sensitive 
to branch lengths and therefore provides a more robust 
metric for convergence. In a test case, it is illustrated 
that the tropical logistic regression can differentiate
between two independently run MCMC chains, even when the 
standard metric cannot. 
  }

\section*{Introduction} \label{sec:intro}
Phylogenomics is a new field that applies tools from phylogenetics to genome datasets.  
The multi-species coalescent model is often used to model the distribution of gene trees under a given species tree \cite{maddison2008mesquite}. 
The first step in statistical analysis of phylogenomic data 
is to 
analyze sequence alignments to determine whether their evolutionary histories are congruent with each other. In this step, evolutionary biologists aim to identify genes with unusual evolutionary events, such as duplication, horizontal gene transfer, or hybridization \cite{Ane}. To accomplish this, they compare multiple sets of {\em gene trees}, that is, phylogenetic trees reconstructed from alignments of genes, with each gene tree characterised by
the aforementioned evolutionary events.
The classification of gene trees into different categories is therefore important for analyzing multi-locus phylogenetic data.

Tree classification can also help in assessing the convergence of Markov Chain Monte Carlo (MCMC) analyses for Bayesian inference on phylogenetic tree reconstruction. Often, we apply MCMC samplers to estimate the posterior distribution of a phylogenetic tree given an observed alignment. These samplers typically run multiple independent Markov chains on the same observed dataset. The goal is to check whether these chains converge to the same distribution. This process is often done by comparing summary statistics computed from sampled trees. These statistics often only depend on the tree topologies, and so they naturally lose information about the branch lengths of the sampled trees. Alternatively, we propose the use of a classification model that classifies trees from different chains and uses statistical measures such as the {\color{black}Area under the ROC Curve (AUC)} to indicate how distinguishable the two chains are. Consequently, high values of AUCs indicate that the chains have not converged to the equilibrium distribution. Currently, there is no classification model over the space of phylogenetic trees, the set of all possible phylogenetic trees with a fixed number of leaves. In this paper, we propose a classifier that is appropriate for the tree space and is sensitive to branch lengths, unlike the summary statistics of most MCMC convergence diagnostic tools.

In Euclidean geometry, the logistic regression model is
the simplest generalized linear model for classification.
It is a supervised learning method that classifies data points by modeling the log-odds of having a response variable in a particular class as a linear combination of predictors. 
This model is very popular in statistical learning 
due to its simplicity, computational speed and interpretability.
However, directly applying such classical supervised models to a set of sampled trees 
may be misleading, since the space of phylogenetic trees 
does not conform to Euclidean geometry. 

The space of phylogenetic trees with labeled leaves $[m]$ is a union of lower dimensional polyhedral cones with dimension $m - 1$ over $\mathbb{R}^e$ where $e = \binom{m}{2}$ \cite{SS,LSTY}.
This space  
is not Euclidean and even lacks convexity \cite{LSTY}. 
In fact,  \cite{SS} showed that the space of phylogenetic trees 
is a {\em tropicalization} of linear subspaces defined by 
a system of tropical linear equations
\cite{10.1093/bioinformatics/btaa564} and is therefore a tropical linear space.

Consequently, many researchers have applied tools from tropical geometry to statistical learning methods in phylogenomics,
such as principal component analysis over the {space of phylogenetic trees} with a given set of leaves $[m]$ \cite{YZZ, 10.1093/bioinformatics/btaa564},  kernel density estimation \cite{YBMH}, MCMC sampling \cite{yoshida2022hit}, and support vector machines \cite{YTMM}. 
Recently, \cite{Akian2021} proposed a tropical linear regression over the tropical projective space as the best-fit tropical hyperplane. 
However, our logistic regression model is built from first 
principles and is not a trivial extension of the aforementioned tropical regression model.

In this paper, an analog of the logistic regression is developed over the tropical projective
space, which is the quotient space $\trtor$ where ${\bf 1}:= (1, 1, \ldots, 1)$. 
Given a sample of observations within this space, the proposed model finds the 
``best-fit'' tree representative $\omega_Y \in \trtor$
of each class $Y \in \{0,1\}$
and the ``best-fit'' deviation of the 
gene trees. This tree representative is a statistical parameter
and can be interpreted as the corresponding species tree 
of the gene trees.
The deviation parameter is defined in terms of the 
variability of branch lengths of gene trees.
It is established that the median tree, specifically the 
Fermat-Weber point, can asymptotically approximate the 
inferred tree representative of each class. The response variable $Y \in \{0,1\}$ 
has conditional distribution $Y|X \sim {\rm Bernoulli}( S(h(X)))$, 
where $h(x)$ is small when $x$ is close to $\omega_0$ and far away from $\omega_{1}$ and
vice versa. 

In Section \ref{sec:basics_trees} 
an overview of tropical geometry and
its connections to phylogenetics is presented. The one-species
and two-species
tropical logistic models are developed in Section 
\ref{sec:method}. Theoretical results, including
the optimality of the proposed method over 
tropically distributed predictor trees, 
the distance distribution of those trees 
from their representative, the 
consistency of estimators and the generalization error of each 
model are stated in Section \ref{sec:method}
and proved in Supplement \ref{sec:proofs}.
Section \ref{sec:optimization}
explains
the benefit and suitability of using
the Fermat-Weber point approximation for the inferred trees
and a sufficient optimality condition is stated.
Computational results are presented in Section 
\ref{sec:results} where a toy example is considered for 
illustration purposes. Additionally, 
a comparison study between classical, tropical and BHV logistic
regression is conducted on data generated under the coalescent 
model. In both the toy example and the coalescent gene trees
example, our model outperforms the alternative regression
models.
Finally, our model is proposed as an alternative MCMC convergence criterion in Section
\ref{sec:mb}.
The paper concludes with a discussion in Section \ref{sec:discussion}.
The code developed and implemented for the proposed
model can be found in \cite{github_aliatimis}.

The dataset can be found at DRYAD with DOI: 10.5061/dryad.tht76hf65.

\section{Tropical Geometry and Phylogenetic Trees} \label{sec:basics_trees}
\subsection{Tropical Basics} \label{sec:basics}

This section covers the basics of tropical 
geometry and provides the theoretical 
background for the model developed in later 
sections. The concept of a tropical metric will be used 
when defining a suitable distribution for the gene trees.
For more details regarding the basic concepts of 
tropical geometry
covered in this section, 
readers are recommended to consult \cite{MS}.


A key tool from tropical geometry is the {\em tropical metric} also known as the {\em tropical distance} defined as follows:
\begin{definition}[Tropical distance]
\label{tropdist}
The {\em tropical distance}, more formally known as 
the Generalized Hilbert projective metric,
between two vectors 
$v, \, w \in (\mathbb R\cup\{-\infty\})^e$ 
is defined as 
\begin{equation} \label{eq:tropmetric} 
d_{\rm tr}(v,w)  := \| v-w \|_{\rm tr} =
\max_{i} \bigl\{ v_i - w_i \bigr\} - \min_{i} \bigl\{ v_i - w_i \bigr\},
\end{equation}
where $v = (v_1, \ldots , v_e)$ and $w= (w_1, \ldots , w_e)$. 
\end{definition}

\begin{remark}
Consider two vectors $v=(c,\dots,c) = 
c {\bf 1} \in \mathbb R^e$ and 
$w={\bf 0} \in \mathbb R ^ e$. 
It is easy to verify that  
\(
d_{\rm tr}(v,w) =  0 
\) 
and as a result $d_{\rm tr}$ is not a metric in 
$\mathbb R ^ e$. 
The space in which $d_{\rm tr}$ is a metric 
treats all points in 
$ \{ c {\bf 1} : c \in \mathbb R  \} = 
\mathbb{R} {\bf 1}$ as the same point. 
The quotient space 
\( 
    ( \mathbb R \cup \{-\infty\} ) ^ e /
    \mathbb R {\rm 1}
\)
achieves just that. 
\end{remark}

\begin{proposition}
\textit{
The function $d_{\rm tr}$ is a well-defined metric on 
\(
    (\mathbb R\cup\{-\infty\})^e \!/\mathbb R {\bf 1},
\)
where ${\bf 1} \in \mathbb R ^ e$ is the vector of all-ones. 
}
\end{proposition}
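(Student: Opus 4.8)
The plan is to verify the three metric axioms—nonnegativity with identity of indiscernibles, symmetry, and the triangle inequality—for $d_{\rm tr}$ on the quotient space $(\mathbb{R}\cup\{-\infty\})^e/\mathbb{R}\mathbf{1}$, after first checking that the formula is well-defined on equivalence classes. For well-definedness, I would observe that if $v' = v + c\mathbf{1}$ and $w' = w + c'\mathbf{1}$, then $v'_i - w'_i = (v_i - w_i) + (c - c')$ for every $i$, so the added constant $c - c'$ cancels when we subtract the minimum from the maximum in \eqref{eq:tropmetric}; hence $d_{\rm tr}(v',w') = d_{\rm tr}(v,w)$ and the function descends to the quotient. (One should note the mild convention needed to handle $-\infty$ coordinates, e.g.\ restricting to vectors with the same support or adopting the usual extended-arithmetic conventions so that differences are well-defined; I would address this briefly.)

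Next, nonnegativity is immediate since $\max_i\{v_i-w_i\} \geq \min_i\{v_i-w_i\}$. For the identity of indiscernibles, $d_{\rm tr}(v,w) = 0$ means $\max_i\{v_i - w_i\} = \min_i\{v_i-w_i\}$, which forces $v_i - w_i$ to be the same constant $c$ for all $i$, i.e.\ $v = w + c\mathbf{1}$; this is exactly the statement that $v$ and $w$ represent the same point in the quotient, as the Remark already anticipates. Symmetry follows from the identity $\max_i\{v_i-w_i\} - \min_i\{v_i-w_i\} = \max_i\{w_i-v_i\} - \min_i\{w_i-v_i\}$, using $\max_i(-a_i) = -\min_i a_i$ applied to $a_i = v_i - w_i$.

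The triangle inequality is the one step requiring a genuine argument. I would set $a_i = u_i - v_i$ and $b_i = v_i - w_i$, so that $u_i - w_i = a_i + b_i$, and then show
\begin{equation*}
\max_i(a_i+b_i) - \min_i(a_i+b_i) \;\leq\; \Bigl(\max_i a_i - \min_i a_i\Bigr) + \Bigl(\max_i b_i - \min_i b_i\Bigr).
\end{equation*}
This follows from the two subadditivity-type bounds $\max_i(a_i+b_i) \leq \max_i a_i + \max_i b_i$ and $\min_i(a_i+b_i) \geq \min_i a_i + \min_i b_i$, and subtracting the second from the first. I expect this to be the main (though still short) obstacle, essentially because it is the only place where the interaction between the $\max$ and $\min$ terms matters; everything else is bookkeeping. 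Altogether these steps establish that $d_{\rm tr}$ is a well-defined metric on $(\mathbb{R}\cup\{-\infty\})^e/\mathbb{R}\mathbf{1}$.
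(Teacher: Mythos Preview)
Your argument is correct and is exactly the standard verification one finds in the tropical geometry literature. Note, however, that the paper does not actually supply its own proof of this proposition: it is stated as a known background fact in the ``Tropical Basics'' section, with readers referred to \cite{MS} for details. So there is no paper-proof to compare against; your write-up would serve as a self-contained justification where the paper simply cites the result.
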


\subsection{Equidistant Trees and Ultrametrics}
Phylogenetic trees depict the evolutionary relationship 
between different taxa. For example, they may summarise the 
evolutionary history of certain species. 
The leaves of the tree correspond to the species studied, 
while internal nodes represent (often hypothetical) 
common ancestors of those species and their ancestors. 
In this paper, only 
rooted phylogenetic trees are considered, 
with the common ancestor of all taxa based on the root 
of the tree.
The branch lengths of these trees are measured 
in evolutionary units, i.e. the amount of evolutionary change.
Under the molecular clock hypothesis, the rate of 
genetic change between species is constant over time, 
which implies genetic equidistance and allows us to 
treat evolutionary units as proportional to time units. 
Consequently, phylogenetic trees of extant species 
are {\em equidistant trees}. 
\begin{definition}[Equidistant tree]
Let $T$ be a rooted phylogenetic tree with leaf label set $[m]$, where $m \in \mathbb{N}$ is the number of leaves.
If the distance from all leaves $i \in [m]$ to the root is the same, 
then $T$ is an equidistant tree.
\end{definition}
It is noted that the molecular 
clock hypothesis has limitations and the rate of 
genetic change can in fact vary from one species to another. 
However, the assumption that gene trees are equidistant is not 
unusual in phylogenomics; 
the multispecies coalescent model 
makes that assumption
in order to conduct inference on the 
species tree from a sample of gene trees \cite{mesquite}.
The proposed classification method is not restricted to
equidistant trees, but all coalescent model gene trees produced in 
Section \ref{sec:coalescent_model}.
are equidistant.
\par 
To conduct any mathematical analysis, 
a vector representation of trees is needed. 
A common way is to use BHV coordinates \cite{BHV} 
but in this paper  
{\em distance matrices} are used instead, which are 
then transformed into vectors.
The main reason is simplicity and computational efficiency;
it is much easier to compute gradients in the tropical projective torus than in the BHV space. 

\begin{definition}[Distance matrix]
    Consider a phylogenetic tree $T$ with leaf label set $[m]$. 
    Its distance matrix $D \in \mathbb{R}^{m \times m}$
    has components $D_{ij}$ 
    being the pairwise distance between a leaf $i \in [m]$ 
    to a leaf $j \in [m]$. 
    It follows that the matrix is symmetric with zeros on its diagonals.
    For equidistant trees, $D_{ij}$ is equal to 
    twice the difference between the current time and the latest time that
    the common ancestor of $i$ and $j$ was alive.
\end{definition}
To form a vector, 
the distance matrix $D$ is mapped onto $\mathbb{R}^e$ by 
vectorizing the strictly upper triangular part of $D$,
i.e. 
$$D \mapsto (D_{12}, \dots, D_{1m}, D_{23}, \dots, 
D_{2m}, \dots, D_{(m-1)m}) \in \mathbb{R}^e,$$ 
where the dimension of the resulting vector is equal to the 
number of 
all possible pairwise combinations of leaves in $T$. 
Hence the dimension of the phylogenetic tree space is 
$e = \binom{m}{2}$. 
In what follows, the connection between the space 
of phylogenetic trees and tropical linear spaces is established.

\begin{definition}[Ultrametric]
Consider the distance matrix
$D \in \mathbb{R}^{m \times m}$.
Then if 
\begin{eqnarray*}
\max\{D_{ij}, D_{jk}, D_{ik}\} 
\end{eqnarray*}
is attained at least twice for any $i,j,k \in [m]$, 
$D$ is an {\em ultrametric}.  
Note that the distance map $d(i,j) = D_{ij}$ forms a metric in 
$[m]$, with the strong triangular inequality satisfied.
The space of ultrametrics is denoted as 
$\Tn$.
\end{definition}

\begin{theorem}[noted in \cite{Buneman}]\label{thm:3pt}
Suppose we have an equidistant tree $T$ 
with a leaf label set $[m]$ and
$D$ as its distance matrix.
Then, $D$ is an ultrametric if and only if 
$T$ is an equidistant tree. 
\end{theorem}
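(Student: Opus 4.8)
The plan is to prove the two implications of the equivalence separately; I read the stated hypothesis as ``$T$ is a rooted phylogenetic tree with leaf label set $[m]$,'' so that the forward direction asserts that an equidistant $T$ has an ultrametric distance matrix and the reverse that an ultrametric distance matrix forces $T$ to be equidistant. Throughout I use the defining property of the distance matrix of an equidistant tree recorded after the Definition of distance matrix: writing $h(v)$ for the common distance from an internal node $v$ down to the leaves below it and $i\wedge j$ for the most recent common ancestor of leaves $i$ and $j$, one has $D_{ij} = 2\,h(i\wedge j)$.

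For the forward direction, assume $T$ is equidistant and fix leaves $i,j,k\in[m]$. Consider the three internal nodes $i\wedge j$, $j\wedge k$, $i\wedge k$. A short case analysis, according to which of the three leaves (or which cherry) branches off first from the root along the tree, shows that two of these nodes coincide and lie weakly above the third; in the trifurcating case all three coincide. Hence two of the heights $h(i\wedge j)$, $h(j\wedge k)$, $h(i\wedge k)$ are equal and both equal their maximum, so after doubling, two of $D_{ij}, D_{jk}, D_{ik}$ are equal and attain $\max\{D_{ij}, D_{jk}, D_{ik}\}$. This is precisely the three-point condition, so $D\in\Tn$.

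For the reverse direction, assume $D$ is the distance matrix of $T$ and is an ultrametric, and show that all root-to-leaf distances in $T$ agree. I would induct on $m$, the base case being the two-leaf tree, where one takes both pendant edges to have length $D_{12}/2$. For the inductive step, choose a pair $i,j$ with $D_{ij} = \min_{k\ne\ell} D_{k\ell}$. Applying the three-point condition to $i,j$ and an arbitrary third leaf $k$ gives $D_{ik} = D_{jk}$, so $i$ and $j$ lie at equal distance from every other leaf; together with the minimality of $D_{ij}$ this forces $\{i,j\}$ to be a cherry of $T$ whose two pendant edges both have length $D_{ij}/2$. Contracting that cherry to a single leaf placed at its parent node yields a tree $T'$ on $m-1$ leaves whose distance matrix is the corresponding principal submatrix of $D$, which is again an ultrametric; by the inductive hypothesis $T'$ is equidistant, and restoring the two equal pendant edges shows the same for $T$.

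I expect the reverse direction to be the main obstacle, concretely the step identifying the minimizing pair $i,j$ as a cherry of $T$ with equal pendant edge lengths: this is the only point at which the combinatorial structure of $T$, and not merely the metric it induces on $[m]$, is used, and some care is needed to rule out degenerate placements of the root and of internal edges. That step amounts to the assertion that $T$ must coincide with the equidistant tree produced by an agglomerative (UPGMA-style) reconstruction from $D$, which is the sense in which the theorem is intended.
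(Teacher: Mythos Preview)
The paper does not prove this theorem at all; it is quoted as a classical fact from \cite{Buneman} and used without argument, so there is no ``paper's own proof'' to compare against. Your forward direction is the standard argument and is fine.

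The reverse direction, however, is not a matter of ``care'' in the cherry step: under your reading of the hypothesis (``$T$ is a rooted phylogenetic tree with distance matrix $D$''), the implication ``$D$ ultrametric $\Rightarrow$ $T$ equidistant'' is simply false, and your cherry claim fails. Take $T=((1,2),3)$ with edge lengths $r\!-\!p=1$, $p\!-\!1=3$, $p\!-\!2=2$, $r\!-\!3=1$. Then $D_{12}=5$, $D_{13}=5$, $D_{23}=4$, so $D$ is an ultrametric; the minimizing pair is $\{2,3\}$, but $\{2,3\}$ is \emph{not} a cherry of $T$, and the root-to-leaf distances are $4,3,1$. The same metric is realised by the equidistant tree $((2,3),1)$, which is what UPGMA would build, but nothing forces the \emph{given} $T$ to be that tree.

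What Buneman's result actually says, and what the paper uses, is the existence/bijection statement: a dissimilarity $D$ on $[m]$ is an ultrametric if and only if it is the leaf distance matrix of \emph{some} equidistant tree (equivalently, the map from equidistant trees to their distance matrices is a bijection onto $\Tn$). Your inductive construction---pick a closest pair, hang them as a cherry at height $D_{ij}/2$, contract, recurse---is exactly the proof of that existence statement, and your base case already reveals that you are constructing a tree rather than verifying a property of a pre-given $T$. If you reframe the reverse direction as ``$D\in\Tn \Rightarrow$ there exists an equidistant $T$ with distance matrix $D$'', your outline goes through; as written, the target is false.
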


Using Theorem \ref{thm:3pt}, if we wish to consider all possible equidistant trees, then it is equivalent to consider the space of ultrametrics as the space of phylogenetic trees on $[m]$.  Here we define $\Tn$ as the space of ultrametrics with a set of leaf labels $[m]$.
Theorem \ref{thm} (explained in \cite{AK,10.1093/bioinformatics/btaa564})
in Supplement \ref{sec:ult_thm} establishes the connection between 
phylogenetic trees and tropical geometry by stating that the ultrametric space is 
a tropical linear space.

\section{Method} \label{sec:method}
Our logistic regression model is designed to capture the association between a binary response variable 
$Y\in \{0,1\}$ and an explanatory variable vector $X\in\mathbb{R}^n$, 
where $n$ 
is the number of covariates in the model. 
Under the logistic model, 
$Y \sim \text{Bernoulli}(p(x|\omega))$ where
\begin{equation} \label{eq:logistic_model}
    p(x|\omega) = \mathbb{P}(Y=1|x) =
    \frac{1}{1+\exp{(-h_{\omega}(x)})} = 
    \sigma\left( h_{\omega}(x) \right),
\end{equation}
where $\sigma$ is the logistic function and 
$\omega \in\mathbb{R}^n$ is
the model parameter that needs to be estimated and $h$ 
is a function that will be specified later.
The log-likelihood function of logistic regression for $N$ observation pairs
$(x^{(1)},y^{(1)}), \dots, (x^{(N)},y^{(N)})$
is 
\begin{equation}
    \label{eq:log-like}
    l(\omega|x,y) = 
    \frac{1}{N} \sum_{i=1}^N y^{(i)} \log p_{\omega}^{(i)} + (1-y^{(i)}) 
    \log(1-p_{\omega}^{(i)}),
\end{equation}
where $p_{\omega}^{(i)} = p(x^{(i)}|\omega)$. 
It is the negative of the cross entropy loss.
The training model seeks a statistical estimator 
$\Hat{\omega}$ that maximizes this function. 

\subsection{Optimal Model}
The framework described thus far incorporates the tropical, classical and  BHV logistic regression as special cases. {\color{black} In this section, we show that }these can be distinguished through the choice of the function $h$. 
In fact, this function $h$ can be derived from the conditional distributions $X|Y$, 
as stated in Equation \eqref{eq:hu_gen} of 
Lemma \ref{lemma:p_bern}, below, by simple application
of the Bayes' rule.

If $X|Y$ is a Gaussian distribution 
with appropriate parameters, the resulting model is the
classical logistic regression. Alternatively, if $X|Y$ is a {\color{black}``}tropical{\color{black}''}  distribution,
then the resulting classification model is the {\color{black}``}tropical{\color{black}''} 
logistic regression. Examples \ref{ex:clas_log} and \ref{ex:trop_log} {\color{black} illustrate this for non-tropical and tropical distributions respectively, and Remark \ref{rm:tropLap} discusses the choice of tropical distribution in more detail.}

Furthermore, the function $h$ from \eqref{eq:hu_gen}
also minimizes the expected cross-entropy loss according to 
Proposition \ref{prop:p_functional}.
Therefore, the \textit{best model} to fit data that have been generated by 
tropical Laplace distribution \eqref{eq:trop_gauss} is the tropical 
logistic regression. {\color{black} We conclude this section showing how the tropical metric and tropical Laplace distribution may be applied to produce two intuitive variants of tropical logistic regression, our one- and two-species models.}
\vskip 0.2in



\begin{lemma}
    \label{lemma:p_bern}
    Let $Y \sim {\rm Bernoulli}(r)$
    and define the random vector $X \in \mathbb{R}^n$ 
    with conditional distribution
    $X|Y \sim f_Y$, where $f_0, f_1$ are probability density
    functions defined in $\mathbb{R}^n$. Then, 
    $Y | X \sim {\rm Bernoulli}(p(X))$ with 
    $p(x) = \sigma(h(x))$, where 
    \begin{equation}
        \label{eq:hu_gen}
        h(x) =
        \log \left(
            \frac{r f_1(x)}{(1-r)f_0(x)}
        \right).
    \end{equation}
\end{lemma}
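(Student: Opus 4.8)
The plan is to apply Bayes' rule directly; the statement is essentially a change of notation. First I would describe the joint law of the pair $(X,Y)$ by its density with respect to the product of Lebesgue measure on $\mathbb{R}^n$ and counting measure on $\{0,1\}$: this density sends $(x,1)\mapsto r f_1(x)$ and $(x,0)\mapsto (1-r) f_0(x)$. Integrating out $Y$ gives the marginal density of $X$ as the mixture $m(x) = r f_1(x) + (1-r) f_0(x)$, and dividing the joint density by $m(x)$ yields
\[
\mathbb{P}(Y=1 \mid X=x) = \frac{r f_1(x)}{r f_1(x) + (1-r) f_0(x)}.
\]

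Next I would put the right-hand side into logistic form. Assuming $r \in (0,1)$ and dividing numerator and denominator by $(1-r) f_0(x)$ — which is legitimate on the set $\{f_0 > 0\}$, a set of full probability under $X \mid Y = 0$ — gives
\[
\mathbb{P}(Y=1 \mid X=x) = \frac{\exp(h(x))}{1 + \exp(h(x))} = \frac{1}{1 + \exp(-h(x))} = \sigma(h(x)),
\]
with $h$ exactly the expression in \eqref{eq:hu_gen}. Since $\mathbb{P}(Y=0 \mid X=x) = 1 - \sigma(h(x))$ then follows automatically, this establishes $Y \mid X \sim {\rm Bernoulli}(\sigma(h(X)))$, which is the claim.

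The only bookkeeping to watch is the behaviour on the boundary of the supports. If $f_1(x) > 0 = f_0(x)$ then $h(x) = +\infty$ and $\sigma(h(x)) = 1$, which is the correct posterior value; symmetrically $h(x) = -\infty$ gives posterior $0$; and the set $\{x : f_0(x) = f_1(x) = 0\}$ carries zero $X$-mass and may be discarded, so the identity extends by continuity to the extended reals. I do not expect any substantive obstacle here — the content of the lemma is simply that it repackages Bayes' theorem in precisely the form required by \eqref{eq:logistic_model}, so that the subsequent Examples \ref{ex:clas_log} and \ref{ex:trop_log} can read off $h$ for the Gaussian and tropical Laplace choices of $f_0, f_1$.
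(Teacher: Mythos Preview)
Your proposal is correct and follows essentially the same route as the paper: apply Bayes' rule to obtain $\mathbb{P}(Y=1\mid X=x)=\dfrac{r f_1(x)}{r f_1(x)+(1-r)f_0(x)}$ and then rewrite this as $\sigma(h(x))$. Your treatment is in fact slightly more careful than the paper's, which omits any discussion of the boundary cases $f_0(x)=0$ or $f_1(x)=0$.
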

\vskip 0.2in

\begin{proposition}
    \label{prop:p_functional}
    Let $Y \sim \text{Bernoulli}(r)$
    and define the random vector $X \in \mathbb{R}^n$ 
    with conditional distribution
    $X|Y \sim f_Y$, where $f_0, f_1$ are probability density
    functions defined in $\mathbb{R}^n$. 
    The functional $p$ that maximises the 
    expected log-likelihood as given by equation
    \eqref{eq:log-like} is $p(x) = \sigma(h(x))$, with 
    $h$ defined as in equation 
    \eqref{eq:hu_gen} of Lemma \ref{lemma:p_bern}.
\end{proposition}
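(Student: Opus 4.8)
The plan is to reduce the optimization over functionals to a pointwise optimization problem, exploiting the tower property of conditional expectation. Writing the population version of \eqref{eq:log-like} as
\[
L(p) = \mathbb{E}\bigl[ Y \log p(X) + (1-Y)\log(1-p(X)) \bigr]
\]
and conditioning on $X$, one obtains $L(p) = \mathbb{E}_X\bigl[ q(X)\log p(X) + (1-q(X))\log(1-p(X)) \bigr]$, where $q(x) := \mathbb{P}(Y=1 \mid X=x)$. Since $L(p)$ is now an average, over the law of $X$, of a quantity that for each fixed $x$ depends only on the single number $p(x) \in (0,1)$, it suffices to maximize the integrand pointwise.

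First I would fix $x$ and study $t \mapsto q(x)\log t + (1-q(x))\log(1-t)$ on $(0,1)$. This map is strictly concave, and setting its derivative to zero gives the unique stationary point $t = q(x)$; hence the pointwise — and, a fortiori, the global — maximizer is $p(x) = q(x)$. The global step is justified because for any admissible functional $p$ and almost every $x$ the integrand at $p(x)$ is bounded above by the integrand at $q(x)$, so integrating against the law of $X$ preserves the inequality and yields $L(p) \le L(q)$, with equality iff $p = q$ a.e. Finally, Lemma \ref{lemma:p_bern} identifies $q(x) = \mathbb{P}(Y=1\mid X=x)$ with $\sigma(h(x))$ for $h$ as in \eqref{eq:hu_gen}, which closes the argument.

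The only real obstacle is measure-theoretic bookkeeping rather than anything conceptual: one must check that $q$ is a bona fide measurable function (it is, being a conditional expectation), that $L(q)$ is finite so the comparison is not vacuous (it equals the negative conditional entropy of $Y$ given $X$), and that functionals $p$ taking the values $0$ or $1$ on a set of positive $X$-measure are handled — but for these $L(p) = -\infty$ and the inequality $L(p)\le L(q)$ is immediate. I expect this to be the part requiring the most care, while the concavity/derivative computation is routine.
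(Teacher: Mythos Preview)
Your proof is correct and follows the same core idea as the paper: both reduce the functional optimization to a pointwise problem by writing the expected log-likelihood as an integral over $x$ of a function depending only on $p(x)$. The paper frames this via the Euler--Lagrange equation from the calculus of variations (noting that since no derivatives of $p$ appear, stationarity reduces to $\partial_p L = 0$), whereas you condition on $X$ first, invoke strict concavity of $t \mapsto q\log t + (1-q)\log(1-t)$ to guarantee the critical point is the unique maximizer, and then appeal to Lemma~\ref{lemma:p_bern} to identify $q(x)=\mathbb{P}(Y=1\mid X=x)$ with $\sigma(h(x))$. Your route is more elementary and more complete: it certifies that the stationary point is actually a global maximum (the paper's Euler--Lagrange step only locates stationary functionals), and it handles the boundary and measurability issues explicitly. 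The paper's version is terser but correspondingly less rigorous on these points.
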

\vskip 0.2in


\begin{example}[Normal distribution and classical logistic regression] \label{ex:clas_log}
Suppose that the two classes are equiprobable ($r=1/2$)
and that the covariate is multivariate normal 
\begin{equation*}
    X|Y \sim \mathcal{N}(\omega_Y,\sigma^2 I_n),
\end{equation*}
where $n$ is covariate dimension and $I_n$ is the identity
matrix.
Using Lemma \ref{lemma:p_bern}, 
the optimal model has 
\begin{equation}
    \label{eq:h_euclidean}
    h(x) = 
    - \frac{\|x-\omega_1\|^2}{2\sigma^2} + 
    \frac{ \|x-\omega_0\|^2}{2\sigma^2} = 
    \frac{(\omega_1-\omega_0)^T}{\sigma^2}
    (x-\Bar{\omega}), 
\end{equation}
where $\|\cdot \|$ is the Euclidean norm and $\Bar{\omega}=(\omega_0+\omega_1)/2$.
This model is the 
{\emph classical logistic regression} model 
with translated covariate $X-\Bar{\omega}$ and 
$\omega = \sigma^{-2} (\omega_1- \omega_0)$.
\end{example}
\vskip 0.2in

\begin{example}[Tropical Laplace distribution] \label{ex:trop_log}
It may be assumed that the covariates are distributed according to the
tropical version of the Laplace distribution, 
as presented in \cite{yoshida2022hit}, 
with mean $\omega_Y$
and probability density functions
\begin{equation}
    \label{eq:trop_gauss}
    f_Y(x) = \frac{1}{\Lambda} \exp \left(
        - \frac{d_{\rm tr}(x,\omega_Y)}{\sigma_Y}
    \right),
\end{equation}
where $\Lambda$ is the normalizing constant of the distribution.  \end{example}
\vskip 0.2in

\begin{proposition}
    \label{prop:normalizing_factor}
    In distribution \eqref{eq:trop_gauss}, 
    the normalizing factor is
    $\Lambda = e! \sigma_Y^{e-1}$.
\end{proposition}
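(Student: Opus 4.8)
The plan is to use translation invariance to put the mean at the origin, turn the defining integral for $\Lambda$ into a one‑dimensional integral against the volume function of tropical balls, evaluate that volume explicitly, and finish with a Gamma integral. First, both $d_{\rm tr}(\,\cdot\,,\omega_Y)$ and the volume form on $\trtor$ are invariant under translation by $-\omega_Y$, so we may take $\omega_Y=\mathbf{0}$. Representing each class of $\trtor$ by its unique representative with last coordinate $0$ identifies the torus with $\R^{e-1}$ carrying Lebesgue measure $\tvol$, and by the layer‑cake formula
\[
\Lambda=\int_{\trtor}\exp\!\Bigl(-\tfrac{\|x\|_{\rm tr}}{\sigma_Y}\Bigr)\,\tvol(dx)
=\int_0^{\infty}\tfrac{1}{\sigma_Y}\,e^{-r/\sigma_Y}\,V(r)\,dr,
\qquad V(r):=\tvol\bigl(\{x:\|x\|_{\rm tr}\le r\}\bigr).
\]
So the whole statement reduces to identifying $V(r)$.

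The key step, which I expect to be the main obstacle, is to show $V(r)=e\,r^{e-1}$. Positive homogeneity of $\|\cdot\|_{\rm tr}$ gives $V(r)=r^{e-1}V(1)$, so it suffices to prove $V(1)=e$. I would do this by covering the unit cube $[0,1]^e\subset\R^e$ by the fibres of the quotient map $\R^e\to\trtor$: in the coordinates $\bar x_i=x_i-x_e$ ($1\le i\le e-1$) together with $x_e$, the change of variables has unit Jacobian, $\bar x$ is exactly the chosen representative, and $\|x\|_{\rm tr}=\max(\bar x_1,\dots,\bar x_{e-1},0)-\min(\bar x_1,\dots,\bar x_{e-1},0)$. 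A direct check shows the fibre of the cube over a point $\bar x$ is an interval of length $\bigl(1-\|\bar x\|_{\rm tr}\bigr)_+$, so integrating out $x_e$ and applying the layer‑cake formula once more gives
\[
1=\tvol\bigl([0,1]^e\bigr)=\int_{\R^{e-1}}\bigl(1-\|\bar x\|_{\rm tr}\bigr)_+\,\tvol(d\bar x)=\int_0^1 V(t)\,dt=\frac{V(1)}{e},
\]
whence $V(1)=e$. (An alternative, more combinatorial route is to decompose the tropical unit ball into the cells on which a prescribed coordinate attains the maximum and another attains the minimum: there are $e(e-1)$ such cells, and a routine integration shows each has volume $1/(e-1)$, again giving $V(1)=e$.)

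Finally, substituting $V(r)=e\,r^{e-1}$ back in and evaluating the Gamma integral,
\[
\Lambda=\frac{e}{\sigma_Y}\int_0^{\infty}e^{-r/\sigma_Y}\,r^{e-1}\,dr=\frac{e}{\sigma_Y}\,\sigma_Y^{e}\,\Gamma(e)=e\,(e-1)!\,\sigma_Y^{e-1}=e!\,\sigma_Y^{e-1}.
\]
One subtlety to flag in the write‑up is the normalization of $\tvol$: the stated constant corresponds to taking $\tvol$ to be Lebesgue measure in a coordinate section of $\trtor$, and choosing a different linear section (e.g. $\sum_i x_i=0$) would merely rescale $\Lambda$ by a fixed Jacobian constant, leaving the model unchanged.
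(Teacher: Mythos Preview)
Your proof is correct and follows essentially the same route as the paper's: reduce $\Lambda$ to a radial integral against the growth of tropical balls, use that the tropical unit ball in $\trtor$ has volume $e$, and finish with a Gamma integral. The only substantive difference is that the paper simply asserts $V(1)=e$ and integrates against the surface area $e(e-1)r^{e-2}$, whereas you actually prove $V(1)=e$ via the cube-fibering argument and use the layer-cake form instead (the two integration forms differ by an integration by parts).
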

\begin{proof}
    See Supplement \ref{sec:proofs}.
\end{proof}
\vskip 0.2in

{\color{black}
\begin{remark}\label{rm:tropLap}
    Consider $\mu\in \mathbb{R}^d$ and a covariance matrix $\Sigma \in \mathbb{R}^{d \times d}$.  Then the pdf of a classical Gaussian distribution is 
    \begin{equation}\label{eq:gausdist}
        f_{\mu, \Sigma}(x) \propto \exp\left(-\frac{1}{2}(x-\mu)^t \Sigma^{-1}(x-\mu)\right)
    \end{equation}
    where $x \in \mathbb{R}^d$ and $y^t$ is the transpose of a vector $y \in \mathbb{R}^d$.  When $\sigma_Y = 1$, the tropical Laplacian distribution in \eqref{eq:trop_gauss} is tropicalization of the left hand side in \eqref{eq:gausdist} where $\Sigma$ is to the tropical identity matrix
\[
\left(
\begin{array}{ccccc}
     0 &-\infty &-\infty & \ldots &-\infty  \\
     -\infty & 0 &-\infty & \ldots &-\infty  \\
     \vdots & \vdots & \vdots & \vdots &\vdots \\
     -\infty & -\infty &-\infty & \ldots &0 \\
\end{array}
\right) .
\]

Tran \cite{tran2020} nicely surveys the many different definitions of tropical Gaussian distributions. Since the space of ultrametrics is a tropical linear space \cite{SS}, it is natural to use tropical ``linear algebra'' for the definition of tropical ``Gaussian'' distribution defined in \eqref{eq:trop_gauss} in this research. Clearly not all desirable properties of the classical Gaussian distribution are necessarily realised in a tropical space.  

For example, as Tran discussed in \cite{tran2020}, we lose some natural intuition of orthogonality of vectors. This means that we lose a nice geometric intuition of a correlation between two random vectors.  Even with the loss of some nice properties of the classical Gaussian distribution, the tropical Laplacian \eqref{eq:gausdist} is a popular choice. It has been applied to statistical analysis of phylogenetic trees: as a kernel density estimator of phylogenetic trees over the space of phylogenetic trees \cite{YBMH}, and as the Bayes estimator \cite{10.1093/sysbio/syr021} because this distribution is interpretable in terms of phylogenetic trees.

In particular, the tropical metric $d_{\rm tr}$ represents the biggest difference of divergences (speciation time and mutation rates) between two species among two trees shown in Example \ref{eg:tropicalMetric}.  This is a very natural and desirable interpretation in terms of phylogenomics. The smaller difference of divergences between two species among the tree with an observed ultrametric $x$ and the tree with the centriod has higher probability.  Therefore, it is natural to apply a sample generated from the multi-species coalescent model where the species tree has the centroid as its dissimilarity map.  
It is worth noting that we do not know much about a well-defined distribution over the space of phylogenetic trees, despite many researchers' attempts \cite{Nye2021}.  
\end{remark}
}
\vskip 0.2in

{\color{black}
\begin{example}\label{eg:tropicalMetric}[Tropical Metric]
    \begin{figure}[h!]
        \centering
        \includegraphics[width=\textwidth]{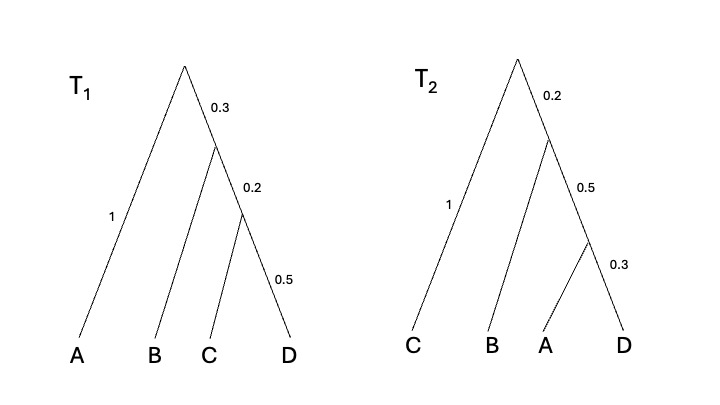}
        \caption{Example for an interpretation of the tropical metric $d_{\rm tr}$ in Example \ref{eg:tropicalMetric}.}
        \label{fig:tropicalmetric}
    \end{figure}
    Suppose we have equidistant trees $T_1$ and $T_2$ with leaf labels $\{A, B, C, D\}$ shown in Fig.~\ref{fig:tropicalmetric}.  Note that leaves $A$ and $C$ in $T_1$ and $T_2$ are switched.  Thus, the pairwise distances from $A$ and $D$ in $T_1$ and $T_2$, as well as he pairwise distances from $C$ and $D$ in $T_1$ and $T_2$ are the largest and second largest differences among all possible pairwise distances.
    
Let $u$ be a dissimilarity may from $T_1$ and $v$ be a dissimilarity map from $T_2$:
\[
\begin{array}{ccc}
     u &=&(2, 2, 2, 1.4, 1.4, 1)  \\
     v&=&(1.6,2,0.6,2,1.6,2). 
\end{array}
\]
Then we have
\[
u - v = (2-1.6, 2-2, 2-0.6, 1.4-2, 1.4-1, 1-2) = (0.4, 0,1.4,-0.6,0.4,-1).
\]
Therefore 
\[
d_{\rm tr}(u, v) = (u - v)_{A, D} - (u - v)_{C, D} 
\]
which means the tropical metric measures the difference of divergence between $A$ and $D$ and difference of divergence between $C$ and $D$.
\end{example}
\vskip 0.2in}

Combining the result of Proposition \ref{prop:normalizing_factor} with
Equations \eqref{eq:hu_gen} and \eqref{eq:trop_gauss} yields  
\begin{equation}
    \label{eq:hu_tropical}
    h_{\omega_0,\omega_1}(x) = 
    \frac{d_{\rm tr}(x,\omega_0)}{\sigma_0} - 
    \frac{d_{\rm tr}(x,\omega_1)}{\sigma_1} + 
    (e-1)
    \log\left(\frac{\sigma_0}{\sigma_1}\right).
\end{equation}
In its most general form, the model parameters are 
$(\omega_0,\omega_1,\sigma_0,\sigma_1)$ so the parameter space is a subset of
$(\trtor)^2 \times \mathbb{R}^2_+$ with dimension $2e$.
Two instances of this general model are particularly practically useful and interpretable. We call these the one-species and two-species models and they will be our focus for tropical logistic regression in the rest of the paper.

\par
For the \textit{one-species model}, 
it is assumed that $\omega_0 =\omega_1$ and 
$\sigma_0 \neq \sigma_1$. If, without loss of generality, 
$\sigma_1>\sigma_0$,
equation \eqref{eq:hu_tropical} becomes 
\begin{equation}
    \label{eq:h_one_species}
    h_{\omega}(x) = 
    \lambda \left( 
        d_{\rm tr}(x,\omega)  -  
        c
    \right), 
\end{equation}
where $\lambda = (\sigma_0^{-1} - \sigma_1^{-1})$ and
$\lambda c = \log{\left(\sigma_1/\sigma_0\right)}$.
Symbolically, 
the expression in equation \eqref{eq:h_one_species} can be considered to 
be a scaled tropical inner product, whose direct analogue in classical 
logistic regression is the classical inner product 
$h_{\omega}(x) = \omega^T x$. 
See Section \ref{Sec:tro:inner:prod} in the supplement for more details.
The classifier is $C(x) = \mathbb{I}(d_{\rm tr}(x,\Hat{\omega} ) > c) $, 
where $\Hat{\omega}$ is the inferred estimator of $\omega^*$. 
Note that the classification threshold and the probability contours ($p(x)$)
are tropical circles, illustrated in Figure \ref{fig:one_species_model}.
\par 
For the \textit{two-species-tree model}, it is assumed that 
$\sigma_0 = \sigma_1$, and $\omega_0\neq \omega_1$. 
Equation \eqref{eq:hu_tropical} reduces to 
\begin{equation}
    \label{eq:h_two_species}
    h_{\omega_0,\omega_1}(x) = 
    \sigma^{-1}(
    d_{\rm tr}(x,\omega_0) - 
    d_{\rm tr}(x,\omega_1)
    ),
\end{equation}
with a classifier
\(
    C(x) = \mathbb{I}(
        d_{\rm tr}(x,\Hat{\omega}_0) > 
        d_{\rm tr}(x,\Hat{\omega}_1)
    ),
\) where $\Hat{\omega}_y$ is the inferred tree for class $y \in \{0,1\}$.
The classification boundary is the tropical bisector
which is extensively studied in \cite{criado2021tropical} 
between the estimators $\Hat{\omega}_0$ and $\Hat{\omega}_1$ 
and the probability contours are tropical 
hyperbolae with $\Hat{\omega}_0$ and $\Hat{\omega}_1$ as foci,
as shown in Figure \ref{fig:two_species_model_euclidean}(right).
\par 
The one-species model 
is appropriate when the gene trees 
of both classes
are concentrated around the same species tree
$\omega$ with potentially different concentration rates.
When the gene trees of each class come from
distributions centered at different species 
trees the two-species model is preferred.

\subsection{Model selection} \label{sec:model_selection}

In the previous subsection, we established the correspondence between the 
covariate conditional distribution and the function $h$ which defines the logistic
regression model. According to Proposition \ref{prop:p_functional}, the best regression model follows from the distribution that fits the data. 
The family of distributions that best fits the training data of a given class can indicate
which regression model to use. 
The question that naturally arises is how to assess which family of 
conditional distributions has the best fit. 
\par
One issue is that the random covariates are multivariate and so the Kolmogorov–Smirnov test
can not be readily applied.
Moreover, the four families considered, namely the 
classical and tropical Laplace and Gaussian distributions, 
are not nested. Nonetheless, it is observed that for all these families 
the distances of the covariates from their centres are Gamma distributed.
This is stated in Corollary \ref{cor} which is based on Proposition \ref{prop:tropical_radius_dist}. Note that the distance metric corresponds to the geometry
of the covariates. However, the arguments used in the proof of Corollary \ref{cor}
do not work for distributions defined on the space of ultrametric trees
$\mathcal{U}_m$, because these spaces are not translation invariant.
For a similar reason, the corollary does not apply to the BHV metric. 
\vskip 0.2in

\begin{proposition}
    \label{prop:tropical_radius_dist}
    Consider a function 
    $d:\mathbb{R}^n \rightarrow \mathbb{R}$ 
    with $\alpha d(x) = d(\alpha x)$, for all 
    $\alpha\geq 0$.
    If $X \sim f$ with 
    $f(x) \propto \exp(-d^i(x)/(i\sigma^i))$ 
    being a valid probability
    density function, for some $i \in \mathbb{N},$ $ \sigma>0$. 
    Then, $d^i(X) \sim  i \sigma^i {\rm Gamma}(n/i)$.
\end{proposition}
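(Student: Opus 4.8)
The plan is to reduce the statement to a one‑dimensional computation by integrating out the ``angular'' directions, exploiting the homogeneity hypothesis $d(\alpha x)=\alpha d(x)$. First I would observe that since $d$ is positively homogeneous of degree one, the sublevel sets $\{x : d(x)\le t\}$ are dilates of the unit set $K=\{x : d(x)\le 1\}$, so $\mathrm{vol}\{x:d(x)\le t\}=t^n \mathrm{vol}(K)=:t^n V$ for $t\ge 0$. Consequently the pushforward of Lebesgue measure under the map $x\mapsto d(x)$ has density $nVt^{n-1}$ on $(0,\infty)$; equivalently, for any integrable $g$, $\int_{\mathbb{R}^n} g(d(x))\,dx = \int_0^\infty g(t)\,nVt^{n-1}\,dt$. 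This is the key structural fact and the only place the hypothesis on $d$ is used.

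Next I would apply this ``coarea''-type identity to the density $f(x)\propto \exp(-d^i(x)/(i\sigma^i))$. Writing $R=d(X)$, the induced density of $R$ is proportional to $t^{n-1}\exp(-t^i/(i\sigma^i))$ on $(0,\infty)$. To identify the law of $R^i$, substitute $s=t^i$, so $t=s^{1/i}$ and $dt=\tfrac{1}{i}s^{1/i-1}\,ds$; the density of $S=R^i$ becomes proportional to $s^{(n-1)/i}\cdot s^{1/i-1}\exp(-s/(i\sigma^i)) = s^{n/i-1}\exp(-s/(i\sigma^i))$. This is exactly the density of a Gamma random variable with shape parameter $n/i$ and scale parameter $i\sigma^i$, i.e. $d^i(X)=S\sim i\sigma^i\,\mathrm{Gamma}(n/i)$ in the paper's parametrization. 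The normalizing constants take care of themselves because both sides are proper densities, so matching the functional form of the kernel suffices; one need not compute $V$ explicitly, though it is of course $V=\Gamma(n/i+1)(i\sigma^i)^{n/i}/(nc)$ where $c$ is the proportionality constant of $f$.

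The main obstacle, such as it is, is justifying the volume‑scaling / change‑of‑variables identity $\int g(d(x))\,dx=\int_0^\infty g(t)\,nVt^{n-1}\,dt$ with enough care: one must know that $K$ has finite volume (guaranteed here, since otherwise $f$ could not be a valid density), that $d$ is measurable, and that the map $t\mapsto t^i$ is a smooth increasing bijection of $(0,\infty)$ so the substitution is legitimate. A clean way to package this is: for $t>0$, $F_R(t)=\mathbb{P}(d(X)\le t)=c\int_{\{d(x)\le t\}}\exp(-d^i(x)/(i\sigma^i))\,dx$, and the substitution $x=t y$ maps $\{d(x)\le t\}$ to $\{d(y)\le 1\}=K$ with Jacobian $t^n$, giving $F_R(t)=c\,t^n\int_K \exp(-t^i d^i(y)/(i\sigma^i))\,dy$; differentiating in $t$ and simplifying, or more simply differentiating $F_{R^i}$ directly, yields the Gamma density. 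I would present the argument via this dilation substitution since it makes the role of homogeneity completely transparent and avoids invoking coarea machinery; the remaining steps are the routine substitution $s=t^i$ and recognition of the Gamma kernel.
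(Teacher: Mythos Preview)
Your argument is correct but follows a genuinely different route from the paper's. You compute the density of $R=d(X)$ directly by observing that the sublevel sets $\{d\le t\}=tK$ scale, so Lebesgue measure pushes forward to $nVt^{n-1}\,dt$, and then substitute $s=t^i$ to recognise the Gamma kernel. The paper instead computes the moment generating function of $d^i(X)$: it first shows, via the same dilation $y=\alpha x$, that the normalising constant satisfies $C_\alpha=C_1/\alpha^n$ where $\alpha=\sigma^{-1}$, and then notes that $\mathbb{E}[e^{z\,d^i(X)}]$ is just a ratio of such constants, namely $C_{(\alpha^i-iz)^{1/i}}/C_\alpha=(1-i\sigma^i z)^{-n/i}$, which is the Gamma$(n/i,\,i\sigma^i)$ MGF. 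Both proofs rest on the identical homogeneity-driven substitution; yours is more geometric and yields the density of $d(X)$ as a byproduct, while the paper's MGF calculation is a touch slicker in that it sidesteps densities and Jacobians entirely once the scaling $C_\alpha\propto\alpha^{-n}$ is in hand.
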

\vskip 0.2in

\begin{corollary} \label{cor}
    If $X \in \mathbb{R}^e$ with $X\sim f \propto 
    \exp{(-d^i(x,\omega^*)/(i\sigma^{i}))}$, where $d$ is the Euclidean metric, then
    $d^i(X,\omega^*) \sim i \sigma^i {\rm Gamma}(e/i)$.
    If $X \in \trtor$ with $X\sim f \propto 
    \exp{(-d_{\rm tr}^i(x,\omega^*)/(i\sigma^{i}))}$, where $d_{\rm tr}$ is the tropical metric, then
    $d_{\rm tr}^i(X,\omega^*) \sim i \sigma^i {\rm Gamma}((e-1)/i)$.
\end{corollary}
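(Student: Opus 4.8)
The plan is to reduce both claims to Proposition~\ref{prop:tropical_radius_dist} by a change of variables that eliminates $\omega^*$, together with, in the tropical case, a linear identification of $\trtor$ with $\mathbb{R}^{e-1}$.

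First I would handle the Euclidean statement. Set $Z := X - \omega^*$; since Lebesgue measure on $\mathbb{R}^e$ is translation invariant, $Z$ has density proportional to $\exp(-\|z\|^i/(i\sigma^i))$, which is a valid pdf. The Euclidean norm $d(z) = \|z\|$ is positively homogeneous, $d(\alpha z) = \alpha d(z)$ for $\alpha \ge 0$, so Proposition~\ref{prop:tropical_radius_dist} applies with $n = e$ and yields $\|Z\|^i \sim i\sigma^i\,{\rm Gamma}(e/i)$. Since $\|Z\| = d(X,\omega^*)$, this is exactly the first claim.

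Next I would treat the tropical statement, where the only genuine subtlety is the dimension drop. I would fix the linear isomorphism $\trtor \cong \mathbb{R}^{e-1}$ induced by $(v_1,\dots,v_e) \mapsto (v_1 - v_e,\dots,v_{e-1} - v_e)$, and again set $Z := X - \omega^*$. Because $d_{\rm tr}(v,w)$ depends only on $v - w$ modulo $\mathbb{R}\mathbf{1}$, and the quotient of Lebesgue measure is translation invariant on $\trtor$, the density of $Z$ is proportional to $\exp(-d_{\rm tr}(z,\mathbf{0})^i/(i\sigma^i)) = \exp(-\|z\|_{\rm tr}^i/(i\sigma^i))$. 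Transported through the chart, $d(\bar z) := \|z\|_{\rm tr}$ is a function on $\mathbb{R}^{e-1}$ with $d(\alpha \bar z) = \alpha d(\bar z)$ for $\alpha \ge 0$, because scaling a representative $v$ by $\alpha$ scales $\max_i v_i - \min_i v_i$ by $\alpha$. I would also check that $\exp(-\|z\|_{\rm tr}^i/(i\sigma^i))$ is integrable over $\mathbb{R}^{e-1}$: for $i = 1$ this is precisely Proposition~\ref{prop:normalizing_factor}, and for general $i$ it follows because $\|\cdot\|_{\rm tr}$ descends to a genuine norm on the $(e-1)$-dimensional quotient, so the integrand is dominated by a multiple of the $i = 1$ integrand in the tail. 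Proposition~\ref{prop:tropical_radius_dist} then applies with $n = e - 1$, giving $d_{\rm tr}(X,\omega^*)^i \sim i\sigma^i\,{\rm Gamma}((e-1)/i)$.

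The hard part will not be any single calculation but the careful bookkeeping around the quotient: verifying that $d_{\rm tr}$ descends to $\trtor$, that the displayed chart is a linear isomorphism under which $\|\cdot\|_{\rm tr}$ pulls back to a positively homogeneous function, and that the induced density on $\mathbb{R}^{e-1}$ is a bona fide probability density. The replacement of $e$ by $e-1$ is exactly the effect of collapsing $\mathbb{R}\mathbf{1}$, and it is also the step that breaks for the ultrametric space $\mathcal{U}_m$ and for the BHV metric, since those spaces are not invariant under the translations $X \mapsto X - \omega^*$ used here — which is why the corollary is stated only for the Euclidean and tropical metrics.
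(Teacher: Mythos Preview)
Your proposal is correct and follows essentially the same approach as the paper: translate by $-\omega^*$ to center at the origin, invoke positive homogeneity of the relevant norm, and apply Proposition~\ref{prop:tropical_radius_dist} with $n=e$ in the Euclidean case and $n=e-1$ in the tropical case via the identification $\trtor\cong\mathbb{R}^{e-1}$. The paper's proof is terser and omits the explicit integrability check and the chart bookkeeping you spell out, but the logical structure is identical.
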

\vskip 0.2in

The suitability of the tropical against the classical logistic regression is assessed 
for the coalescent model and the Mr Bayes trees, by visually comparing the fits of
the theoretical Gamma distributions to Euclidean and tropical distances of the gene trees
to the species tree.
\subsection{Consistency and Generalization Error}
In this subsection, the consistency of the statistical estimators 
(in Theorem \ref{prop:consistency_of_estimators}) and of 
the tropical logistic regression as a learning algorithm 
(in Propositions \ref{prop:gen_error_one_species} and 
\ref{prop:gen_error})
are established.
Finally, the generalization error (probability of misclassification for unseen data) for the one-species {\color{black}model} is derived and an 
upper bound is found for the generalization error of the two-species model.
In both cases the error bounds are getting better as the estimation error $\epsilon$ shrinks to zero.
It is worth mentioning that in the case of exact estimation, the generalization error of the 
one-species model can be computed explicitly by equation \eqref{eq:gen_error_one_species}.
Moreover, there is a higher misclassification rate from the more dispersed class 
(inequality \eqref{eq:outer_class_higher_misclassification}).
\vskip 0.2in

\begin{theorem}[Consistency]
    \label{prop:consistency_of_estimators}
    The estimator 
    $(\Hat{\omega},\Hat{\sigma}) = 
    (\Hat{\omega}_0, \Hat{\omega}_{1},
    \Hat{\sigma}_0,\Hat{\sigma}_1)
    \in \Omega^2 \times \Sigma^2$ 
    of the parameter 
    $
        (\omega^*,\sigma^*) = 
        (\omega_0^*,\omega_1^*,\sigma_0^*,\sigma_1^*)
        \in \Omega^2 \times \Sigma^2
    $
    is defined as the maximizer of the logistic likelihood function, 
    where 
    $\Omega \subset \mathbb{R}^e/\mathbb{R}{\bf 1}$
    and $\Sigma \subset \R_+$
    are compact sets.
    Moreover, it is assumed that the covariate-response pairs $(X_1,Y_1), (X_2,Y_2), \dots, (X_n,Y_n)$
    are independent and identically distributed with 
    $X_i \in \trtor$, $d_{\rm tr}(X,\omega_Y)$ 
    being integrable and square-integrable and
    $Y_i \sim \text{Bernoulli}(
    S(h(X_i,(\omega^*,\sigma^*) )))$.
    Then, 
    $$(\Hat{\omega},\Hat{\sigma}) 
    \overset{p}{\to} (\omega^*,\sigma^*)
    \text{ as } 
    n \to \infty.
    $$ 
    In other words, the model parameter estimator is 
    consistent.
\end{theorem}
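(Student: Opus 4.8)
The plan is to treat $\hat\theta := (\hat\omega,\hat\sigma)$ as an M-estimator and run a Wald-type consistency argument: identify the population criterion, show it is uniquely (indeed \emph{well-separatedly}) maximised at the truth $\theta^* := (\omega^*,\sigma^*)$, establish a uniform law of large numbers over the compact parameter set $\Theta := \Omega^2\times\Sigma^2$, and combine the two. Write $\theta=(\omega_0,\omega_1,\sigma_0,\sigma_1)$, $h_\theta$ for the function in \eqref{eq:hu_tropical}, $S$ for the logistic link, $\ell_\theta(x,y)=y\log S(h_\theta(x))+(1-y)\log(1-S(h_\theta(x)))$, $\ell_n(\theta)=\frac1n\sum_i\ell_\theta(X_i,Y_i)$ for the empirical log-likelihood \eqref{eq:log-like}, and $M(\theta):=\mathbb{E}\,\ell_\theta(X,Y)$. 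First I would note that, since $Y\mid X\sim\mathrm{Bernoulli}(S(h_{\theta^*}(X)))$, conditioning on $X$ and applying Gibbs' inequality gives $M(\theta^*)-M(\theta)=\mathbb{E}\bigl[\mathrm{KL}(\mathrm{Bernoulli}(S(h_{\theta^*}(X)))\,\Vert\,\mathrm{Bernoulli}(S(h_\theta(X))))\bigr]\ge 0$, with equality iff $h_\theta=h_{\theta^*}$ almost surely on $\mathrm{supp}(X)$ (using that $S$ is strictly increasing). Then I would use compactness of $\Theta$ together with continuity of $\theta\mapsto M(\theta)$ to upgrade the (to-be-proved) uniqueness of the maximiser to the well-separation condition $\sup_{d(\theta,\theta^*)\ge\varepsilon}M(\theta)<M(\theta^*)$ for every $\varepsilon>0$.

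For the uniform LLN I would show the class $\{(x,y)\mapsto\ell_\theta(x,y):\theta\in\Theta\}$ is Glivenko--Cantelli, so that $\sup_{\theta\in\Theta}|\ell_n(\theta)-M(\theta)|\overset{p}{\to}0$. This needs: compactness of $\Theta$; continuity of $\theta\mapsto\ell_\theta(x,y)$ for each $(x,y)$, which is clear because $d_{\rm tr}$ is continuous in its second argument and on the compact $\Sigma\subset\R_+$ the maps $\sigma_y\mapsto1/\sigma_y$ and $\log(\sigma_0/\sigma_1)$ are continuous; and an integrable envelope. For the envelope, from $|\log S(t)|\le|t|+\log2$ and the same bound for $|\log(1-S(t))|$ I get $|\ell_\theta(x,y)|\le|h_\theta(x)|+\log2$; since $\Sigma$ is bounded away from $0$ and above and $\Omega$ has finite tropical diameter, the triangle inequality for $d_{\rm tr}$ yields $|h_\theta(x)|\le C_1\bigl(d_{\rm tr}(x,\omega_0^*)+d_{\rm tr}(x,\omega_1^*)\bigr)+C_2$ with $C_1,C_2$ depending only on $\Theta$, and the right-hand side is integrable because $d_{\rm tr}(X,\omega_Y)$ is (again passing from $\omega_Y^*$ to $\omega_0^*,\omega_1^*$ via the triangle inequality and the diameter bound). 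Only integrability of the envelope is needed here, so the stated square-integrability is stronger than consistency requires; it is presumably reserved for the rate/normality analysis.

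Given these two pieces the conclusion is routine: $\ell_n$ is continuous on the compact $\Theta$, so a measurable maximiser $\hat\theta$ exists; for $\varepsilon>0$ set $\delta:=M(\theta^*)-\sup_{d(\theta,\theta^*)\ge\varepsilon}M(\theta)>0$, and on $\{d(\hat\theta,\theta^*)\ge\varepsilon\}$ the inequality $\ell_n(\hat\theta)\ge\ell_n(\theta^*)$ forces $M(\hat\theta)\ge M(\theta^*)-2\sup_{\Theta}|\ell_n-M|$, hence $2\sup_{\Theta}|\ell_n-M|\ge\delta$, an event of vanishing probability. The hard part is the \emph{identifiability} claim buried in the first step: that $h_\theta\equiv h_{\theta^*}$ on $\mathrm{supp}(X)$ forces $\theta=\theta^*$. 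Because $d_{\rm tr}(\cdot,\omega)$ is only piecewise linear and is invariant under translation by $\mathbf 1$, this cannot be obtained by differentiation as in the Gaussian Example~\ref{ex:clas_log}; instead I would exploit the combinatorial structure of the linearity regions of $x\mapsto d_{\rm tr}(x,\omega_0)/\sigma_0-d_{\rm tr}(x,\omega_1)/\sigma_1$ — and, for the two-species model \eqref{eq:h_two_species}, the geometry of the tropical bisector of \cite{criado2021tropical} — evaluating at suitably chosen points of $\mathrm{supp}(X)$ to recover first $(\omega_0,\omega_1)$ in $\trtor$ and then the scalars $(\sigma_0,\sigma_1)$. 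This argument needs $\mathrm{supp}(X)$ to be rich enough, which holds in particular under the tropical Laplace generative model of Example~\ref{ex:trop_log}, where $X$ has full support on $\trtor$. I expect this tropical identifiability step, rather than any of the probabilistic machinery, to be the crux of the proof.
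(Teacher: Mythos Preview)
Your high-level strategy coincides with the paper's: both run a Wald/M-estimator argument by (i) showing the population criterion is uniquely maximised at the truth, (ii) verifying a uniform LLN via a dominated envelope on the compact $\Theta$, and (iii) combining these through the standard argmax-continuity device. Your use of Gibbs' inequality to get $M(\theta^*)-M(\theta)=\mathbb{E}[\mathrm{KL}(\cdot\Vert\cdot)]\ge0$ is a clean repackaging of what the paper does via a second-order Taylor expansion of $t\mapsto S(\alpha)\log S(t)+S(-\alpha)\log S(-t)$ at $t=\alpha$; the envelope bound you give is likewise the same in spirit as theirs, and your remark that only integrability (not square-integrability) is needed for this step is correct --- the paper uses the second moment only when bounding $\mathbb{E}(r(X))$ through their Taylor remainder, which is an artefact of their particular route.

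Where your sketch stops is exactly where the paper spends most of its effort: the identifiability claim that $h_\theta\equiv h_{\theta^*}$ on a set of full support forces $\theta=\theta^*$. The paper proves this as a stand-alone lemma, and the mechanism is not quite the ``read off the linearity regions / tropical bisector'' picture you outline. Instead they look at the discrete second difference $q(x+\epsilon E_i)+q(x-\epsilon E_i)-2q(x)$ of $q:=h_{\theta^*}-h_\theta$; via a one-sided directional-derivative lemma for $d_{\rm tr}$ this equals $\epsilon$ times a combinatorial quantity $s_i(v)$ that counts non-unique extremal coordinates of $v$. Summing over $i$ and evaluating at $x=\omega_0^*,\omega_1^*,\omega_0,\omega_1$ (and carefully chosen perturbations thereof) yields a system of inequalities on the $\lambda$'s that forces the centres to coincide pairwise and then the scales to match. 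So your diagnosis that identifiability is the crux is right, but the concrete tool is this second-difference/$s_i$ calculus rather than bisector geometry.
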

\vskip 0.2in

\begin{proposition}[One-species generalization error]
    \label{prop:gen_error_one_species}
    Consider the one-species model where 
    $\omega=\omega_0=\omega_1 \in \trtor$ and 
    without loss of generality
    $\sigma_0 < \sigma_1$. 
    The classifier is 
    $C(x)=\mathbb{I}(h_{\Hat{\omega}}(x) \geq 0)$, 
    where $h$ is defined in equation \eqref{eq:h_one_species}
    and $\Hat{\omega}$ is the estimate for $\omega^{\star}$.
    Define the covariate-response joint random variable
    $(X,Y)$ with $Z = \sigma_Y^{-1} d_{\rm tr}(X,\omega_Y^*)$
    drawn from the same distribution with cumulative density 
    function $F$. Then, 
    \begin{align*}
        \mathbb{P}(C(X) = 1|Y=0) &\in \left[
            1-F(\sigma_1 \left( \alpha + \epsilon \right)),
            1-F(\sigma_1 \left( \alpha - \epsilon \right))  
        \right], \\
         \mathbb{P}(C(X) = 0|Y=1) &\in \left[
            F(\sigma_0 \left( \alpha - \epsilon \right)),
            F(\sigma_0 \left( \alpha + \epsilon \right))  
        \right], \text{ where } \\ 
        \alpha = \frac{\log{\frac{\sigma_1}{\sigma_0}}}{\sigma_1 - \sigma_0}, & 
        \,\,
        \text{ and }
        \,\,
        \epsilon = (e-1) \frac{d_{\rm tr}( \Hat{\omega}, \omega^* ) }{\sigma_1\sigma_0}.
    \end{align*}
    The generalization error defined as 
    $\mathbb{P}(C(X) \neq Y)$ 
    lies in the average of the two intervals above.
    In particular, note that if $\Hat{\omega} = \omega^*$, then $\epsilon=0$ and the intervals shrink to 
    a single point, 
    so the 
    misclassification probabilities and generalization error can be computed explicitly.
    \begin{equation}
        \label{eq:gen_error_one_species}
        \mathbb{P}(C(X) \neq Y) = \frac{1}{2} \left(
            1-F(\sigma_1 \alpha) + F(\sigma_0 \alpha)
        \right)
    \end{equation}
    Moreover, if $\Hat{\omega} = \omega_*$ and $Z \sim 
    {\rm Gamma}(e-1,1)$, then 
    \begin{equation}
        \label{eq:outer_class_higher_misclassification}
        \mathbb{P}(C(X)=1 | Y = 0) < \mathbb{P}(C(X)=0 | Y = 1).
    \end{equation}
\end{proposition}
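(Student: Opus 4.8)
The plan is to reduce each claimed bound to a one–dimensional statement about the radial variable $d_{\rm tr}(X,\omega^*)$, whose conditional law is known, and to absorb the estimation error through the metric triangle inequality.

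First, since $\sigma_1>\sigma_0$ we have $\lambda=\sigma_0^{-1}-\sigma_1^{-1}>0$, so by \eqref{eq:h_one_species} the event $\{h_{\hat\omega}(x)\ge 0\}$ is exactly $\{d_{\rm tr}(x,\hat\omega)\ge c\}$; thus $C(X)=1\iff d_{\rm tr}(X,\hat\omega)\ge c$ and $C(X)=0\iff d_{\rm tr}(X,\hat\omega)<c$. The difficulty is that we control the law of the distance to the \emph{true} common centre $\omega^*=\omega_0^*=\omega_1^*$, namely $d_{\rm tr}(X,\omega^*)=\sigma_Y Z$ with $Z\sim F$ given $Y$, but not the distance to $\hat\omega$. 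Since $d_{\rm tr}$ is a genuine metric on $\trtor$, the reverse triangle inequality gives $\bigl|\,d_{\rm tr}(X,\hat\omega)-d_{\rm tr}(X,\omega^*)\,\bigr|\le d_{\rm tr}(\hat\omega,\omega^*)=:\delta$, so
\[
\{\,d_{\rm tr}(X,\omega^*)\ge c+\delta\,\}\ \subseteq\ \{\,C(X)=1\,\}\ \subseteq\ \{\,d_{\rm tr}(X,\omega^*)\ge c-\delta\,\}.
\]
Conditioning on $Y=0$ and substituting $d_{\rm tr}(X,\omega^*)=\sigma_0 Z$ turns this into $1-F\bigl((c+\delta)/\sigma_0\bigr)\le \mathbb{P}(C(X)=1\mid Y=0)\le 1-F\bigl((c-\delta)/\sigma_0\bigr)$, and a short computation using the definitions of $c$, $\alpha$ and $\epsilon$ rewrites the endpoints as $1-F\bigl(\sigma_1(\alpha\pm\epsilon)\bigr)$. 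The $Y=1$ case is symmetric, with $C(X)=0\iff d_{\rm tr}(X,\hat\omega)<c$ and $d_{\rm tr}(X,\omega^*)=\sigma_1 Z$, yielding $\bigl[F(\sigma_0(\alpha-\epsilon)),\,F(\sigma_0(\alpha+\epsilon))\bigr]$. The generalization error then follows from the law of total probability: as the one–species model has equiprobable classes ($r=\tfrac12$, which is precisely what collapses the general $h$ to \eqref{eq:h_one_species}), $\mathbb{P}(C(X)\ne Y)=\tfrac12\mathbb{P}(C(X)=1\mid Y=0)+\tfrac12\mathbb{P}(C(X)=0\mid Y=1)$ lies in the average of the two intervals; if $\hat\omega=\omega^*$ then $\delta=\epsilon=0$, the intervals collapse to points, and we recover \eqref{eq:gen_error_one_species}.

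It remains to prove the strict inequality \eqref{eq:outer_class_higher_misclassification} in the exact–estimation case with $Z\sim{\rm Gamma}(e-1,1)$, and this is where the real work lies. Set $k=e-1$ and $t=\sigma_1/\sigma_0>1$, and put $u=\sigma_0\alpha$, $v=\sigma_1\alpha=tu$; then $\mathbb{P}(C(X)=0\mid Y=1)=F(u)$ and $\mathbb{P}(C(X)=1\mid Y=0)=1-F(v)$, so \eqref{eq:outer_class_higher_misclassification} amounts to $\phi(t):=F(u)+F(v)-1>0$. Using the closed forms $u=k\log t/(t-1)$, $v=k\,t\log t/(t-1)$ (so $v-u=k\log t$) together with the elementary bounds $1-1/t<\log t<t-1$ for $t>1$, one checks $u<k<v$, $u'<0$, $v'>0$, $\lim_{t\to1^+}\phi(t)=2F(k)-1>0$ (the median of a Gamma lies below its mean), and $\lim_{t\to\infty}\phi(t)=0$. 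The crux is to show $\phi'<0$ throughout $(1,\infty)$: writing $g$ for the ${\rm Gamma}(k,1)$ density, $\phi'=g(u)u'+g(v)v'$, and the identity $g(v)/g(u)=(v/u)^{k-1}e^{-(v-u)}=t^{\,k-1}e^{-k\log t}=1/t$ reduces $\phi'<0$ to $v'<t\,|u'|$, which on substituting the derivatives becomes $\log t>\tfrac{2(t-1)}{t+1}$. This last inequality is elementary: $\psi(t):=\log t-\tfrac{2(t-1)}{t+1}$ has $\psi(1)=0$ and $\psi'(t)=\tfrac{(t-1)^2}{t(t+1)^2}\ge 0$, so $\psi>0$ on $(1,\infty)$. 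Hence $\phi$ is strictly decreasing on $(1,\infty)$ with limit $0$, so $\phi>0$ there, which is \eqref{eq:outer_class_higher_misclassification}.

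The step I expect to be the main obstacle is exactly this last one. The density ratio $g(v)/g(u)$ collapses to $1/t$ only because of the precise relation $v-u=k\log t$ coming from the form of $\alpha$, and the sign of $\phi'$ then turns on the \emph{sharp} logarithmic inequality $\log t>2(t-1)/(t+1)$ rather than the cruder $\log t<t-1$ that suffices elsewhere; without it the monotonicity of $\phi$ — and hence \eqref{eq:outer_class_higher_misclassification} — can break. By contrast, the classifier reduction, the triangle–inequality sandwich, and the total–probability step are routine.
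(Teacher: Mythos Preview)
Your argument for the interval bounds is essentially identical to the paper's: both rewrite $\{C(X)=1\}$ as a tropical-distance threshold, sandwich $d_{\rm tr}(X,\hat\omega)$ between $d_{\rm tr}(X,\omega^*)\pm d_{\rm tr}(\hat\omega,\omega^*)$ via the triangle inequality, and read off the bounds from the law of $Z$. (There is a cosmetic mismatch between the proposition's stated $\alpha$ and the factor $k=e-1$ that appears when you write $u=k\log t/(t-1)$; the paper's own proof carries the same slip, so this is inherited rather than introduced by you.)

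Where your proof genuinely diverges from the paper is in establishing \eqref{eq:outer_class_higher_misclassification}. The paper sets $g(x)=1-F(xu(x))-F(u(x))$ (your $-\phi$) and argues that $g$ is \emph{decreasing} on $(1,\infty)$, combining $F'(u)\ge F'(xu)$ with $(1+x)u'+u\ge 0$ to claim $g'(x)\le -F'(xu)\bigl((1+x)u'+u\bigr)\le 0$. That implication is actually in the wrong direction: since $u'<0$, the inequality $F'(u)\ge F'(xu)$ makes $-F'(u)u'\ge -F'(xu)u'$, which yields a \emph{lower} bound on $g'$, not an upper one; a direct numerical check (e.g.\ $e=3$, where $g(1.5)\approx -0.180$, $g(2)\approx -0.168$, $g(3)\approx -0.141$) confirms that $g$ is increasing, not decreasing. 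Your route avoids this pitfall: you show $\phi'<0$ via the exact density-ratio identity $g(v)/g(u)=1/t$ and the sharp inequality $\log t>2(t-1)/(t+1)$, and then pair this with the limit $\phi(t)\to 0$ as $t\to\infty$ to conclude $\phi>0$. This is correct and in effect repairs the paper's argument; the only extra ingredient you need is the limit at infinity, whereas the paper's intended route (had the monotonicity gone the other way) would have used only the limit at $t\to 1^+$.
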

\vskip 0.2in

\begin{proposition}[Two-species generalization error]
    \label{prop:gen_error}
    Consider the random vector 
    $X \in \mathbb{R}^e/\mathbb{R}{\bf 1}$
    with response 
    $Y \in \{0,1\}$ and the random variable
    $Z = d_{\rm  tr}( X , \omega^*_Y )$.
    Assuming that the probability density function is 
    $f_X(x) \propto f_Z(d_{\rm  tr}( x , \omega^*_Y ))$,
    the generalization error satisfies the 
    following upper bound
    \begin{equation}
        \label{eq:gen_error}
        \mathbb{P}\left( C(X) \neq Y \right) \leq 
        \frac{1}{2} 
            F^C_{Z}(\Delta_{\epsilon}) + h(\epsilon)
        ,
    \end{equation}
    where $\epsilon = d_{\rm  tr}(\Hat{\omega}_1 , \omega^*_1 )+d_{\rm  tr}(\Hat{\omega}_{0}, \omega^*_{0})$,
    $2\Delta_\epsilon =  \left(
    d_{\rm  tr}(\omega_1^*, \omega_{0}^*) - \epsilon \right)$,
    $F^C_Z$ is the complementary cumulative distribution of $Z$, 
    and 
    $h(\epsilon)$ is an increasing function of $\epsilon$ 
    with $2h(\epsilon) \leq F^C_Z(\Delta_{\epsilon})$ and
    $h(0)=0$ assuming that 
    $\mathbb{P}(d_{\rm  tr}(X,\omega_1^*)) = d_{\rm  tr}(X,\omega_{-1}^*)  
    )=0$.
    Moreover, under the conditions of Theorem 
    \ref{prop:consistency_of_estimators}, 
    our proposed learning algorithm is consistent.
\end{proposition}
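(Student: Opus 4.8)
The plan is to control the misclassification event by purely metric inequalities, split it into a ``Bayes'' part that persists even when $\hat\omega_0=\omega^*_0,\hat\omega_1=\omega^*_1$ and an ``estimation'' part of width $\epsilon$, and then read off consistency from Theorem~\ref{prop:consistency_of_estimators}. First I would reduce everything to a statement about the scalar $Z=d_{\rm tr}(X,\omega^*_Y)$. Fix $x$ with true label $y$ and suppose it is misclassified, i.e.\ $d_{\rm tr}(x,\hat\omega_{1-y})<d_{\rm tr}(x,\hat\omega_{y})$ (ties have probability $0$ under the no-atom hypothesis). Using the triangle inequality for $d_{\rm tr}$ to replace $\hat\omega_i$ by $\omega^*_i$ up to the errors $d_{\rm tr}(\hat\omega_i,\omega^*_i)$, and then the inequality $d_{\rm tr}(x,\omega^*_{1-y})\ge d_{\rm tr}(\omega^*_0,\omega^*_1)-d_{\rm tr}(x,\omega^*_y)$, yields $2\,d_{\rm tr}(x,\omega^*_{y})>d_{\rm tr}(\omega^*_0,\omega^*_1)-\epsilon=2\Delta_\epsilon$, hence $d_{\rm tr}(x,\omega^*_{y})>\Delta_\epsilon$. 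Since $d_{\rm tr}$ is translation invariant on $\trtor$ and $f_X(x)\propto f_Z(d_{\rm tr}(x,\omega^*_Y))$, the conditional law of $Z$ given $Y$ does not depend on $Y$, so this step already gives the crude bound $\mathbb{P}(C(X)\ne Y)\le F^C_Z(\Delta_\epsilon)$.

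To sharpen the constant from $1$ to $\tfrac12$, I would decompose, conditionally on $Y=y$, the misclassification event into the disjoint union of $A_y=\{d_{\rm tr}(X,\omega^*_{1-y})<d_{\rm tr}(X,\omega^*_{y})\}$ (where the true centres already misclassify $X$) and a boundary layer $L_y(\epsilon)\subseteq\{0\le d_{\rm tr}(X,\omega^*_{y})-d_{\rm tr}(X,\omega^*_{1-y})<\epsilon\}$, which the same triangle inequality places inside $\{d_{\rm tr}(X,\omega^*_y)>\Delta_\epsilon\}$. The no-atom hypothesis makes $\ell_y(\epsilon):=\mathbb{P}(L_y(\epsilon)\mid Y=y)$ nondecreasing in $\epsilon$ with $\ell_y(0)=0$, and I would set $h(\epsilon):=\min\{\tfrac12(\ell_0+\ell_1)(\epsilon)+\tfrac12(F^C_Z(\Delta_\epsilon)-F^C_Z(\Delta_0)),\ \tfrac12 F^C_Z(\Delta_\epsilon)\}$, a minimum of two nondecreasing functions that vanishes at $0$ and satisfies $2h\le F^C_Z(\Delta_\epsilon)$ (and whenever the cap is active the target inequality still holds by the crude bound). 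For $A_y$ I would exploit the tropical reflection $\phi(x)=\omega^*_0+\omega^*_1-x$, which is a $d_{\rm tr}$-isometry, an involution interchanging $\omega^*_0,\omega^*_1$, and pushes the class-$0$ law onto the class-$1$ law; hence $\mathbb{P}(A_0\mid Y=0)=\mathbb{P}(A_1\mid Y=1)=:q$, and it remains to prove $q\le\tfrac12 F^C_Z(\Delta_0)$. As the class-$0$ density depends on $X$ only through $r=d_{\rm tr}(X,\omega^*_0)$, slicing over the tropical spheres $S_r$ reduces this to the claim that at most half of $S_r$ (in the measure induced by the radial density) lies in the competing ball $\{d_{\rm tr}(\cdot,\omega^*_1)<r\}$; integrating over $r>\Delta_0$ then gives $q\le\tfrac12 F^C_Z(\Delta_0)\le\tfrac12 F^C_Z(\Delta_\epsilon)$, and adding the two contributions produces \eqref{eq:gen_error}.

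For consistency, under the hypotheses of Theorem~\ref{prop:consistency_of_estimators} we have $\hat\omega_0\overset{p}{\to}\omega^*_0$ and $\hat\omega_1\overset{p}{\to}\omega^*_1$, so $\epsilon\overset{p}{\to}0$, $\Delta_\epsilon\overset{p}{\to}\Delta_0$ and $h(\epsilon)\overset{p}{\to}0$; the right-hand side of \eqref{eq:gen_error} is continuous in $\epsilon$ with limit $\tfrac12 F^C_Z(\Delta_0)$, which is exactly the generalization error of the Bayes classifier (the $\hat\omega=\omega^*$ instance of $C$), so the excess risk vanishes in probability. Equivalently, the misclassification probability is a continuous functional of $(\hat\omega_0,\hat\omega_1)$ by dominated convergence under the no-atom hypothesis, and continuity of the risk together with consistency of the parameter estimator yields consistency of the algorithm.

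I expect the geometric claim in the second step to be the main obstacle: tropical spheres are polytopes with no rotational symmetry, so ``at most half of $S_r$ lies in the competing ball'' cannot be obtained from a Euclidean-style reflection and must instead be argued via the involution $\phi$ together with the monotonicity of $d_{\rm tr}(\cdot,\omega^*_1)$ along tropical rays issuing from $\omega^*_0$, or by a direct cone-by-cone analysis of $S_r\cap\{d_{\rm tr}(\cdot,\omega^*_1)<r\}$. A smaller, more routine difficulty is arranging that $h(\epsilon)$ is simultaneously nondecreasing, zero at $\epsilon=0$, and bounded by $\tfrac12 F^C_Z(\Delta_\epsilon)$ — which the capped construction above is designed to handle.
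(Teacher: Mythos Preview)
Your crude bound $\mathbb{P}(C(X)\ne Y)\le F^C_Z(\Delta_\epsilon)$ is obtained exactly as in the paper, and the idea of using a reflection to cut the constant to $\tfrac12$ is also the paper's idea. The genuine gap is that you picked the wrong reflection. You use $\phi(x)=\omega^*_0+\omega^*_1-x$, which swaps the two centres and (as you note) only gives the symmetry $\mathbb{P}(A_0\mid Y{=}0)=\mathbb{P}(A_1\mid Y{=}1)$; this does not by itself yield a factor $\tfrac12$, and you are then left with the ``at most half of the tropical sphere $S_r$ lies in the competing ball'' claim, which you correctly flag as the main obstacle and do not prove.

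The paper instead reflects about the \emph{class centre}: $x\mapsto 2\omega^*_Y-x$. This map preserves the class-$Y$ law (by the radial-symmetry hypothesis), preserves $Z_X=d_{\rm tr}(X,\omega^*_Y)$, and, by one application of the triangle inequality,
\[
d_{\rm tr}(2\omega^*_Y-X,\omega^*_{-Y})+d_{\rm tr}(X,\omega^*_{-Y})\ \ge\ d_{\rm tr}(2\omega^*_Y-X,X)=2Z_X,
\]
which rearranges to $D_X+D_{2\omega^*_Y-X}\le 0$ with $D_X:=Z_X-d_{\rm tr}(X,\omega^*_{-Y})$. Thus $\{D_X\ge -\epsilon\}$ is carried into $\{D_X\le \epsilon\}$ by a $Z$-preserving, measure-preserving involution; this is precisely the ``at most half of $S_r$'' statement you were seeking, obtained in one line without any cone-by-cone analysis. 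Averaging the two events and using inclusion--exclusion over $\{Z_X\ge\Delta_\epsilon\}$ gives the bound with $h(\epsilon)=\tfrac12\,\mathbb{P}(|D_X|\le\epsilon,\ Z_X\ge\Delta_\epsilon)$, so your capped construction of $h$ is unnecessary.

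For consistency, your first sentence overshoots: $\tfrac12 F^C_Z(\Delta_0)$ is only an upper bound on the Bayes error, not equal to it, so the limit of \eqref{eq:gen_error} does not squeeze the excess risk to zero. The paper instead sandwiches directly via $\mathrm{BE}=\mathbb{P}(D_X\ge 0)\le \mathbb{P}(C_{\hat\omega}(X)\ne Y)\le \mathbb{P}(D_X\ge -\epsilon)$ and uses continuity of the c.d.f.\ of $D_X$ at $0$ together with $\epsilon\overset{p}{\to}0$. Your alternative ``continuity of the risk in $(\hat\omega_0,\hat\omega_1)$ by dominated convergence'' argument is fine and equivalent.
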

\vskip 0.2in

Observe that the upper bound is a strictly increasing function of 
$\epsilon$. 
\vskip 0.2in

\begin{example}
    The complementary cumulative distribution of 
    ${\rm Gamma}(n,\sigma)$ is 
    $F^C(x) = \Gamma(n,x/\sigma)/\Gamma(n,0)$, 
    where $\Gamma$ is the 
    upper incomplete gamma function and
    $\Gamma(n,0)=\Gamma(n)$
    is the regular Gamma 
    function.
    Therefore, 
    the tropical distribution given in equation 
    \eqref{eq:trop_gauss} yields the 
    following upper bound
    for the
    generalization error 
    \begin{equation}
        \label{eq:upper_bound_gen_error}
        \frac{\Gamma\left(e-1, 
        \frac{d_{\rm tr}(\omega_0^*,\omega_1^*)}{2\sigma}
        \right)}
        {2\Gamma(e-1)},
    \end{equation}
    under the assumptions of Proposition \ref{prop:gen_error} 
    and assuming that the estimators coincide with the 
    theoretical parameters. This assumption is reasonable for large sample sizes and it follows from 
    Theorem \ref{prop:consistency_of_estimators}.
\end{example}
\vskip 0.2in

In subsequent sections, these theoretical results will guide us in implementing our model.
Bounds on the generalization error from Propositions
\ref{prop:gen_error_one_species} and
\ref{prop:gen_error} are computed and 
the suitability of
Euclidean and tropical distributions, 
and as a result of classical and tropical logistic regards,
is assessed
using the distance distribution 
of Proposition \ref{prop:tropical_radius_dist}.  

\section{Optimization}  \label{sec:optimization}
As in the classical logistic regression, the parameter vectors $(\hat\omega, \hat\sigma)$ maximising the log-likelihood \eqref{eq:log-like}, are chosen as statistical estimators. Identifying these requires the implementation of a continuous optimization routine. While root-finding algorithms typically work well for identifying maximum likelihood estimators in the classical logistic regression where the log-likelihood is concave, they are unsuitable here. The gradients of the log-likelihood under the proposed tropical logistic models are only piecewise continuous, with the number of discontinuities increasing along with the sample size. Furthermore, even if a parameter is found, it may merely be a local optimum. In light of this, the tropical Fermat-Weber problem of \cite{lin2018tropical} is revisited.

\subsection{Fermat-Weber Point}
A Fermat-Weber point or geometric mean $\Tilde{\omega}_n$ of the sample set $(X_1,\dots,X_n)$ 
is a point that minimizes the sum of distances from to sample points, i.e.
\begin{equation}
    \label{eq:fermat_weber_point}
    \Tilde{\omega}_n \in \argmin_{\omega} \sum_{i=1}^n d_{\rm tr}(X_i,\omega).
\end{equation}
This point is rarely unique for finite $n$, {\color{black} indeed there will often be an infinite set of Fermat-Weber points \cite{lin2018tropical}}. However, the proposition below gives conditions for asymptotic convergence. 
\vskip 0.2in

\begin{proposition}
    \label{prop:consistency_of_Fermat_Weber_points}
    Let $X_i \overset{\rm iid}{\sim} f$, where 
    where $f$ is 
    a distribution that is symmetric around its center
    $\omega^* \in \mathbb{R}^{e}/\mathbb{R}{\bf 1}$ i.e. 
    $f(\omega^* + \delta) = f(\omega^* - \delta)$ for all 
    $\delta \in \mathbb{R}^{e}/\mathbb{R}{\bf 1}$. 
    Let $\Tilde{\omega}_n$ be any Fermat-Weber point as
    defined in equation \eqref{eq:fermat_weber_point}. 
    Then, $\Tilde{\omega}_n \overset{p}{\to} \omega^*$ 
    as $n \to \infty$.
\end{proposition}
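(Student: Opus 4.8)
The plan is to treat $\tilde\omega_n$ as an M-estimator and invoke the standard argmin-consistency argument; the only genuinely tropical ingredient will be the identification of the population minimizer. First I would reduce to the case $\omega^*=0$: translation by $-\omega^*$ is an isometry of $\trtor$, it sends $f$ to an \emph{even} density (symmetric about the origin), and it commutes with the Fermat--Weber problem, so it is harmless. Put $M_n(\omega)=\tfrac1n\sum_{i=1}^n d_{\rm tr}(X_i,\omega)$ and $M(\omega)=\mathbb E\,d_{\rm tr}(X,\omega)$, where we assume $\mathbb E\,d_{\rm tr}(X,0)<\infty$ (this holds, e.g., for the tropical Laplace density of Example~\ref{ex:trop_log} by Corollary~\ref{cor}); by the triangle inequality $M$ is then finite everywhere. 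For fixed $v$ the map $w\mapsto d_{\rm tr}(v,w)=\max_i(v_i-w_i)+\max_i(w_i-v_i)$ is a sum of two maxima of affine functions, hence convex on $\trtor$, and it is $1$-Lipschitz in $d_{\rm tr}$; so both $M_n$ and $M$ are convex and Lipschitz.

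Next I would establish a uniform law of large numbers. By the strong law, $M_n(\omega)\to M(\omega)$ almost surely for each fixed $\omega$; applying this on a countable dense subset of $\trtor$ and using that pointwise convergence of finite convex functions is automatically uniform on compacta (Rockafellar, \emph{Convex Analysis}, Thm.~10.8), I get $\sup_{\omega\in K}|M_n(\omega)-M(\omega)|\to 0$ a.s.\ for every compact $K$. Moreover $M$ is coercive: $M(\omega)\ge d_{\rm tr}(\omega,0)-M(0)\to\infty$, and the same bound with $M_n(0)$ in place of $M(0)$ shows each $M_n$ is coercive as well; hence, choosing a ball $B$ large enough that $M>M(0)+1$ on $\partial B$ and using uniform convergence on $B$, every Fermat--Weber point $\tilde\omega_n$ lies in the fixed compact ball $B$ with probability tending to $1$.

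It then remains to show that $0$ is the \emph{unique} minimizer of $M$ on $B$. Using the evenness of $f$ (so $-X$ and $X$ have the same law) together with the elementary identity $d_{\rm tr}(-v,w)=d_{\rm tr}(v,-w)$, one gets $M(-\omega)=\mathbb E\,d_{\rm tr}(-X,\omega)=\mathbb E\,d_{\rm tr}(X,\omega)=M(\omega)$, and convexity then gives $M(0)\le\tfrac12 M(\omega)+\tfrac12 M(-\omega)=M(\omega)$, so $0$ is a minimizer. The hard part will be uniqueness, since the summands $d_{\rm tr}(X_i,\cdot)$ are only piecewise linear and hence the problem is genuinely non-strictly-convex. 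I would argue as follows: the minimizer set $\mathcal M$ of $M$ is compact, convex and symmetric about $0$; if it contained some $\omega_1\neq 0$, then $t\mapsto M(t\omega_1)$ would be constant on $[-1,1]$, and since $t\mapsto d_{\rm tr}(X,t\omega_1)$ is convex for each $X$, the equality case of Jensen's inequality would force $t\mapsto d_{\rm tr}(X,t\omega_1)$ to be affine on $[-1,1]$ for $f$-almost every $X$. This fails on the positive-probability set of $X$ in a small neighbourhood of $0$, where the segment $\{t\omega_1\}$ crosses a non-differentiability stratum of $\|\cdot\|_{\rm tr}$ inside $(-1,1)$ and $t\mapsto d_{\rm tr}(X,t\omega_1)$ has a genuine kink (for $X=0$ it is exactly $|t|\,\|\omega_1\|_{\rm tr}$). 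Hence $\mathcal M=\{0\}$. This is the one step requiring a mild nondegeneracy of $f$ near its centre, which the tropical Laplace distribution satisfies.

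Finally, with $M_n\to M$ uniformly on compacta, $M$ having the well-separated unique minimizer $0$, and $\tilde\omega_n\in B$ with probability tending to $1$, the standard argmin-consistency argument (e.g.\ van der Vaart, \emph{Asymptotic Statistics}, Thm.~5.7) yields $\tilde\omega_n\overset p\to 0$, i.e.\ $\tilde\omega_n\overset p\to\omega^*$ in the original coordinates. I expect the main obstacle to be the uniqueness of the population Fermat--Weber point in the third paragraph; the convexity of $d_{\rm tr}(v,\cdot)$, the uniform LLN, and the concluding M-estimation step are all routine.
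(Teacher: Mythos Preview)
Your proposal is correct and follows the same M-estimator template as the paper---identify the unique population minimizer, establish a uniform LLN, and invoke argmin consistency---but the execution of the individual steps differs in ways worth noting.

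For the step you flag as the main obstacle (uniqueness of the population minimizer), the paper gives a much shorter argument that avoids convexity and kinks altogether. For $\omega\neq\omega^*$ it sets
\[
F(x)=d_{\rm tr}(x,\omega)+d_{\rm tr}(2\omega^*-x,\omega)-2d_{\rm tr}(x,\omega^*),
\]
observes $F\ge 0$ from the triangle inequality together with $d_{\rm tr}(x,2\omega^*-x)=2d_{\rm tr}(x,\omega^*)$, and notes $F(\omega^*)=2d_{\rm tr}(\omega^*,\omega)>0$; positive density near $\omega^*$ then gives $\mathbb E F(X)>0$, and the symmetry of $f$ turns this into $2M(\omega)>2M(\omega^*)$. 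Your Jensen-equality-plus-kink argument is valid (the observation that $g_X(1)+g_X(-1)-2g_X(0)>0$ for $X$ near $0$ makes it rigorous), but the paper's route is a couple of lines and requires no analysis of the non-smooth structure of $\|\cdot\|_{\rm tr}$. Both arguments silently use the same nondegeneracy hypothesis, namely that $f$ puts mass near its centre.

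For the remaining steps the differences cut the other way. The paper obtains its uniform LLN from a dominated ULLN (Newey--McFadden) on a compact parameter set $\Omega$ that it assumes without stating; you instead exploit convexity of $w\mapsto d_{\rm tr}(v,w)$ via Rockafellar to upgrade pointwise SLLN to local uniform convergence, and you recover compactness from coercivity of $M$ and $M_n$. Your treatment here is more self-contained. Overall the two proofs are of comparable length; the paper's uniqueness step is cleaner, while your handling of compactness and the ULLN is cleaner.
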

\vskip 0.2in

The significance of Proposition \ref{prop:consistency_of_Fermat_Weber_points} is twofold.
It proves that the Fermat-Weber sets of points sampled from symmetric distributions
tend to a unique point. This is a novel result and ensures that for 
sufficiently large sample sizes the topology of any Fermat-Weber point is fixed.
Additionally, using Theorem \ref{prop:consistency_of_estimators} and 
Proposition \ref{prop:consistency_of_Fermat_Weber_points}, 
$\Hat{\omega}_n - \Tilde{\omega}_n  \overset{p}{\to}  0$ 
as $n \to \infty$. 
Furthermore, empirical evidence in Figure \ref{fig:asymptotic_error}, see the following section, suggests that 
$d_{\rm tr}(\Hat{\omega}_n, \omega^*) = \mathcal{O}_p(1/\sqrt{n})$
and $d_{\rm tr}(\Tilde{\omega}_n, \omega^*) = \mathcal{O}_p(1/\sqrt{n})$. 
These statements are left as conjectures and proofs of them are beyond the scope of this paper.
Assuming they hold and applying triangular inequality, it follows that 
$ d_{\rm tr}(\Hat{\omega}_n, \Tilde{\omega}_n)= \mathcal{O}_p(1/\sqrt{n}).$
As a result, for a sufficiently large sample size we may use 
the Fermat-Weber point as an approximation for the MLE vector.
Indeed, there are benefits in doing so.\\ 
\par 
Instead of having a single optimization problem with $2e-1$ variables, 
three simpler problems are considered; 
finding the Fermat-Weber point of each of the two classes, which has $e-1$ degrees 
of freedom and then finding the optimal $\sigma$ which is a one dimensional 
root finding problem. The algorithms of our implementation for both model
can be found in Supplement \ref{sec:alg}.

There is also another another benefit of using Fermat-Weber points.
Proposition \ref{prop:vanishing_gradient} provides a sufficient optimality condition
that the MLE lacks, since a vanishing gradient in the log likelihood function 
merely shows that there is a local optimum.
\vskip 0.2in

\begin{proposition}
    \label{prop:vanishing_gradient}
    Let $X_1,\dots,X_n \in \trtor$, 
    $\omega \in \trtor$ and define the function
    \begin{equation*}
        f(\omega) = \sum_{i=1}^{n} d_{\rm tr}(X_i,\omega).
    \end{equation*}
    \begin{enumerate}[i.]
        \item The gradient vector of $f$ is defined at $\omega$ if and only if
        the vectors $\omega - X_i$ have unique maximum and minimum components 
        for all $i \in [n]$. 
        \item If the gradient of $f$ at $\omega$ is well-defined and zero, then 
        $\omega$ is a Fermat-Weber point.
    \end{enumerate}
\end{proposition}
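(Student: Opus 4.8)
The plan is to regard $f$ as a \emph{convex} function on the $(e-1)$-dimensional vector space $\trtor$ and to reduce the whole statement to elementary convex analysis together with one explicit directional-derivative computation. The starting point is that each summand $g_i(\omega):=d_{\rm tr}(X_i,\omega)=\max_{j,k}\bigl((\omega_j-(X_i)_j)-(\omega_k-(X_i)_k)\bigr)$ is a pointwise maximum of finitely many affine functions of $\omega$, hence convex and finite; therefore $f=\sum_{i=1}^n g_i$ is a finite convex function on $\trtor$, and differentiability of $f$ at $\omega$ is understood in the natural sense on that vector space.

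For part (i) I would work with the one-sided directional derivative $g'(\omega;d):=\lim_{t\downarrow 0}t^{-1}\bigl(g(\omega+td)-g(\omega)\bigr)$. For a finite convex function this limit exists for every direction $d$, is sublinear in $d$ (positively homogeneous and subadditive), is additive over finite sums, and has the property that $g$ is differentiable at $\omega$ precisely when $d\mapsto g'(\omega;d)$ is linear, i.e. when $g'(\omega;d)+g'(\omega;-d)=0$ for all $d$. Writing $u^{(i)}:=\omega-X_i$ and letting $M_i$, $m_i$ be the sets of coordinates at which $u^{(i)}$ attains its maximum, respectively its minimum, a direct expansion gives
\[
g_i'(\omega;d)=\max_{j\in M_i}d_j-\min_{j\in m_i}d_j,
\]
which is a linear functional of $d$ exactly when $|M_i|=|m_i|=1$, i.e. when $\omega-X_i$ has a unique maximal and a unique minimal coordinate. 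Since $f'(\omega;d)=\sum_i g_i'(\omega;d)$, the ``if'' direction follows at once. For ``only if'', suppose some $\omega-X_{i_0}$ fails the condition; then one can choose a direction $d$ (put $\pm1$ on two coordinates that both attain the extremum in question for $u^{(i_0)}$, and $0$ elsewhere) for which $g_{i_0}'(\omega;d)+g_{i_0}'(\omega;-d)>0$ strictly, while sublinearity gives $g_k'(\omega;d)+g_k'(\omega;-d)\ge 0$ for every $k$; summing yields $f'(\omega;d)+f'(\omega;-d)>0$, so $f$ is not differentiable at $\omega$. When the condition holds for all $i$, the same expansion identifies $\nabla f(\omega)$ as the vector whose $k$-th entry equals the number of indices $i$ with $M_i=\{k\}$ minus the number with $m_i=\{k\}$; this vector has coordinate-sum zero, consistent with $f$ being constant in the direction ${\bf 1}$.

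Part (ii) is then immediate: a finite convex function on a finite-dimensional vector space which is differentiable at a point with vanishing gradient attains its global minimum there (the gradient inequality gives $f(z)\ge f(\omega)$ for all $z$). Applying this to $f$ on $\trtor$ shows that such an $\omega$ minimizes $\sum_{i=1}^n d_{\rm tr}(X_i,\cdot)$, i.e. $\omega$ is a Fermat-Weber point in the sense of \eqref{eq:fermat_weber_point}.

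The main obstacle is the ``only if'' half of (i): a priori, a kink in one summand $g_{i_0}$ could be cancelled by the behaviour of the other summands, so a naive termwise argument is not valid. Convexity is exactly what rules this out, because for convex functions the one-sided directional derivatives are sublinear and additive over sums, so a strict kink in any single term ($g_{i_0}'(\omega;d)+g_{i_0}'(\omega;-d)>0$) must persist in $f$. A secondary, purely bookkeeping point is that the differential calculus here lives on the quotient $\trtor$ rather than on $\mathbb{R}^e$; this causes no difficulty since $\|\cdot\|_{\rm tr}$ descends to an honest norm on $\trtor$ and the candidate gradients automatically annihilate ${\bf 1}$.
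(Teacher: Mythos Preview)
Your proof is correct. The paper's proof proceeds by essentially the same underlying mechanism but packaged differently: instead of invoking convex-analysis facts (sublinearity of one-sided directional derivatives, the gradient inequality), the paper computes everything by hand. For part~(i) the paper tests only the coordinate directions $\pm E_j$, using an explicit lemma that gives $d_{\rm tr}(x+\epsilon E_j,y)+d_{\rm tr}(x-\epsilon E_j,y)-2d_{\rm tr}(x,y)=\epsilon\, s_j(x-y)$ with $s_j\ge 0$; since the second differences of $f$ along $E_j$ sum the nonnegative quantities $s_j(X_i-\omega)$, differentiability forces each $s_j(X_i-\omega)=0$, which is exactly your no-cancellation step specialised to coordinate directions. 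For part~(ii) the paper writes out the inequality $f(\omega+v)\ge f(\omega)+\sum_i(v_{M_i}-v_{m_i})$ directly and shows the linear term vanishes when the gradient does, which is just the gradient inequality made explicit. Your convex-analysis framing is cleaner and makes the structure transparent (in particular, it immediately explains why a kink in a single $g_{i_0}$ cannot be cancelled), while the paper's bare-hands version avoids appealing to any background theory and yields the same explicit gradient formula you obtain.
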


In \cite{lin2018tropical}, Fermat-Weber points are computed 
by means of linear programming, which is computationally 
expensive. Employing a gradient-based method is much faster,
but there is no guarantee of convergence. 
Nevertheless, if the gradient, which is an integer vector, 
vanishes, then it is guaranteed, as above, that the algorithm 
has reached a Fermat-Weber point. This tends to happen rather frequently, but not in all cases examined in Section
\ref{sec:results}.
\vskip 0.2in
{\color{black}
\begin{remark}
    Our choice of Fermat-Weber points to represent centers is not the only practical option, however it is an especially desirable choice due to the interpretability of its resulting solutions. 
    
    Recently, Com\v aneci and Joswig studied  tropical Fermat-Weber points obtained using the asymmetric tropical distance  \cite{joswig2023}. They found that if all $X_i$ are ultrametric, then the resulting tropical Fermat-Weber points are also ultrametric, all with the same tree topology.  
    On the other hand, Lin et al. \cite{LSTY} show that a tropical Fermat-Weber point defined with $d_{\rm tr}$ of a sample taken from the space of ultrametrics could fall outside of the ultrametric space.  
    
    Despite this, the major drawback of using the asymmetric tropical distance, is that it would result in losing the phylogenetic interpretation of the distance or dissimilarity between two trees  held by the tropical metric $d_{\rm tr}$ - see Remark \ref{rm:tropLap}.  
\end{remark}
}
\section{Results} \label{sec:results}
In this section, tropical logistic regression is applied 
in three different scenarios. 
The first and simplest 
considers datapoints generated from the tropical Laplace distribution. 
Secondly, gene trees sampled from a coalescent model are classified based on the species tree they have been generated from, and finally 
it is applied as an MCMC convergence criterion for the 
phylogenetic tree construction, using output from the
\texttt{Mr Bayes} software.
The models' performance in terms of misclassification rates
and AUCs on these datasets 
is examined.

\subsection{Toy Example}
In this example, a set of data points is generated from the 
tropical normal distribution 
as defined in Equation \eqref{eq:trop_gauss} 
using rejection sampling. 

\begin{figure}[h]
    \centering
    \includegraphics[width=.6\linewidth]{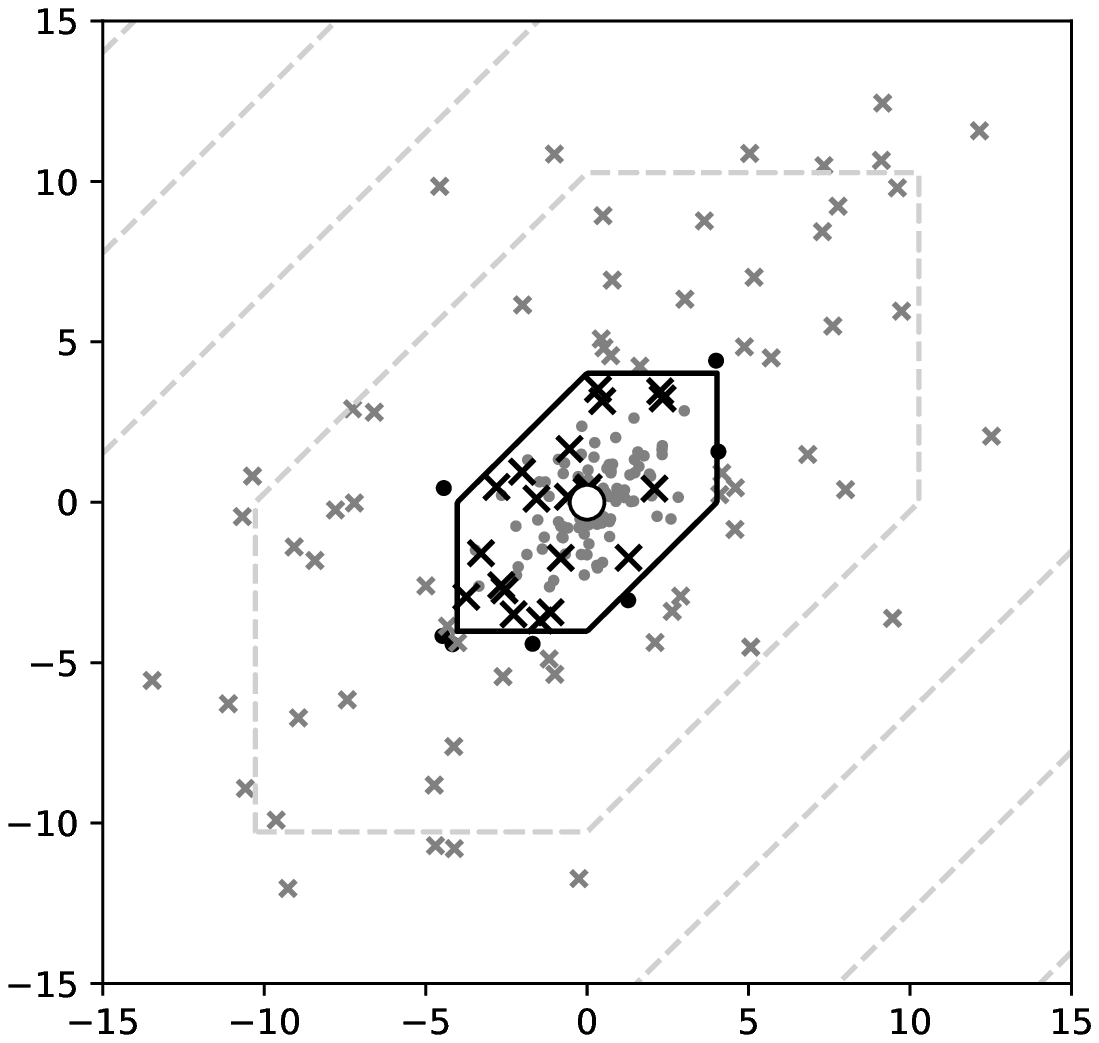}
    \caption{Scatterplot of $200$ points - $100$ dots for class $0$ 
    and $100$ Xs for
    class $1$, black for misclassified and grey otherwise - 
    imposed upon a contour plot of the probability of inclusion in class 0, where the 
    black contour is the classification threshold.
    The deviation parameters used in data generation 
    were $\sigma_0=1,\sigma_1=5$ and the centre of the distribution 
    (white-filled point) is the origin. 
    The centres of the two distributions are 
    $\omega_0 = \omega_1$.
    }
    \label{fig:one_species_model}
\end{figure}
\par 
The data points are defined in the tropical projective 
torus $\trtor$, which is isomorphic to $\R^{e-1}$. 
To map $x \in \trtor$ to $\R^{e-1}$, simply set the last component 
of $x$ to $0$, or in other words 
$x \mapsto (x_1-x_e,x_2-x_e, \dots, x_{e-1} - x_e)$.
For illustration purposes, it is desirable to plot points in 
$\mathbb{R}^2$, so we use $e=3$ which corresponds to phylogenetic trees 
with $3$ leaves. 
Both the one-species model and the two-species model are examined. 
\begin{figure}[h]
    \centering
    \includegraphics[width=.49\linewidth]{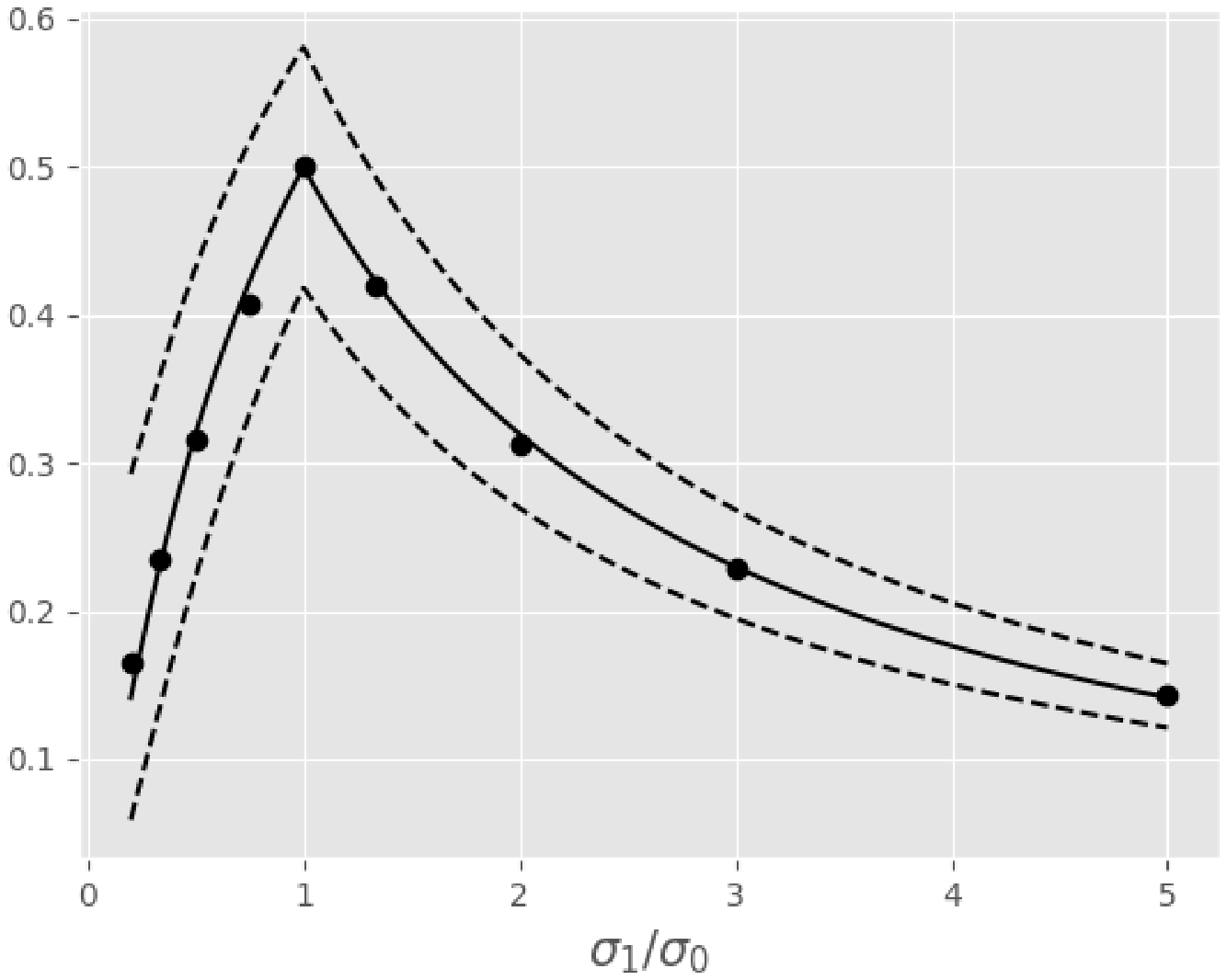}
    \includegraphics[width=.49\linewidth]{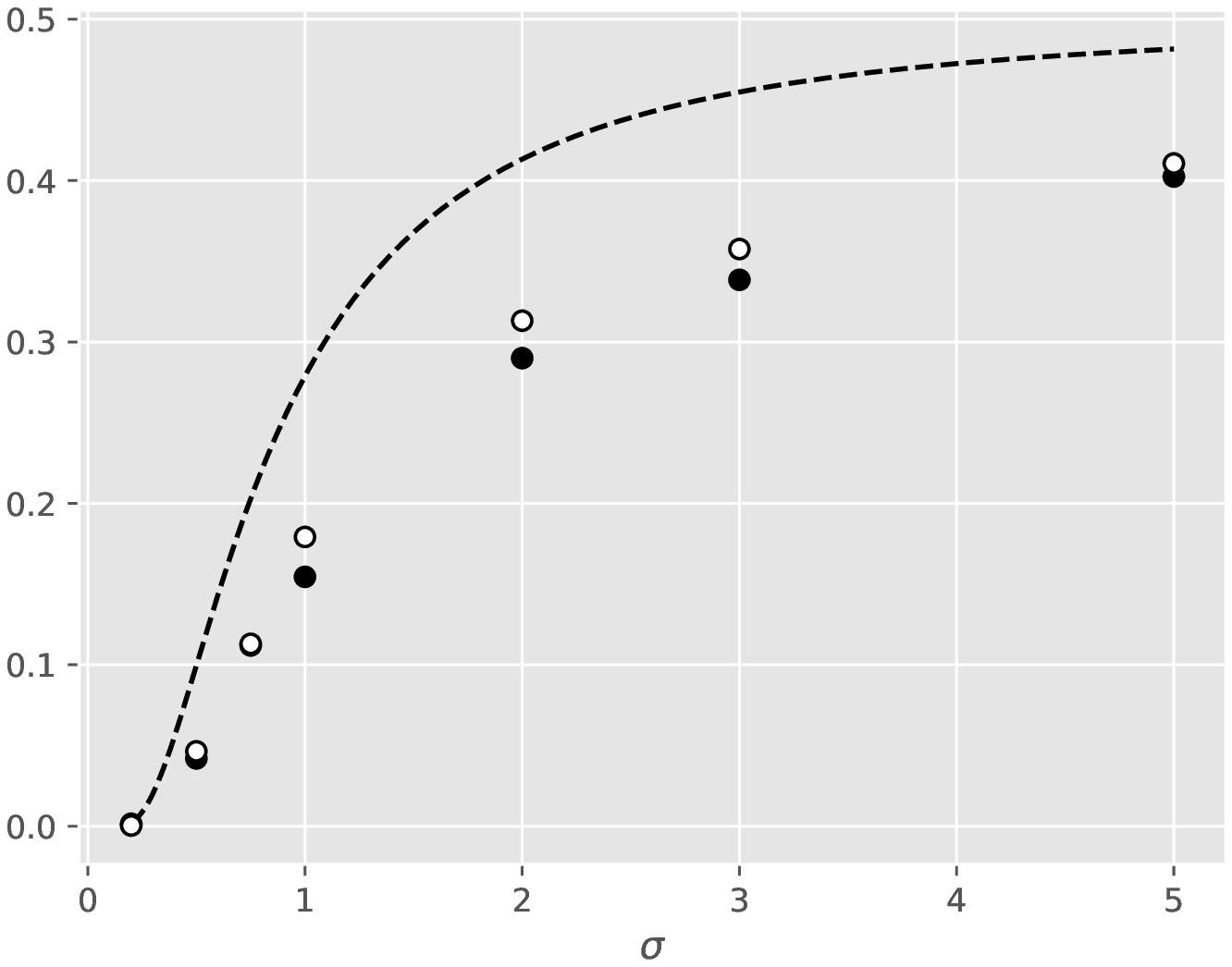}
    \caption{(left) Generalization error for $9$ different 
    deviation ratios. The estimator 
    $\Hat{\omega} = (0.3,0,3)$  differs from the true 
    parameter $\omega = (0,0)$. The upper and lower bounds
    of Proposition \ref{prop:gen_error_one_species} are
    plotted in dashed lines and the generalization error 
    for the correct estimator $\Hat{\omega}= \omega^*$ plotted
    in solid line. The dots represent the proportion of 
    misclassified points from a set of $2000$ points in 
    each experiment, $1000$ points for each class.
    (right) Generalization errors for 7 different dispersion parameters
    with black markers 
    for the two-species tropical logistic regression and white markers
    for the classical logistic regression.
    The upper bound \eqref{eq:upper_bound_gen_error}
    of Proposition \ref{prop:gen_error} is plotted in dashed line. 
    }
    \label{fig:gen_error}
\end{figure}
\par 
In the case of the former, 
$\omega = \omega_0 = \omega_1$ and $\sigma_0 \neq \sigma_1$. 
The classification boundary in this case is a tropical circle. 
If $\sigma_0 < \sigma_1$, the algorithm classifies points close 
to the inferred centre to class $0$ and those that are more 
dispersed away from the centre as class $1$. 
For simplicity, the centre is set to be the origin 
$\omega=(0,0,0)$
and no inference is performed. 
In Figure ~\ref{fig:one_species_model} a scatterplot of 
of the two classes is shown, where misclassified points are 
highlighted. 
As anticipated from Proposition 
\ref{prop:gen_error_one_species} 
there are more misclassified points 
from the more dispersed class (class $1$). 
Out of $100$ points for each class,
there are $7$ and $21$ misclassified points 
from class $0$ and $1$ 
respectively, while the theoretical probabilities 
calculated from equation \eqref{eq:gen_error_one_species}
of Proposition 
\ref{prop:gen_error_one_species} are 
$9\%$ and $19\%$ respectively. 
\begin{figure}[h]
    \centering
    \includegraphics[width=.49\linewidth]{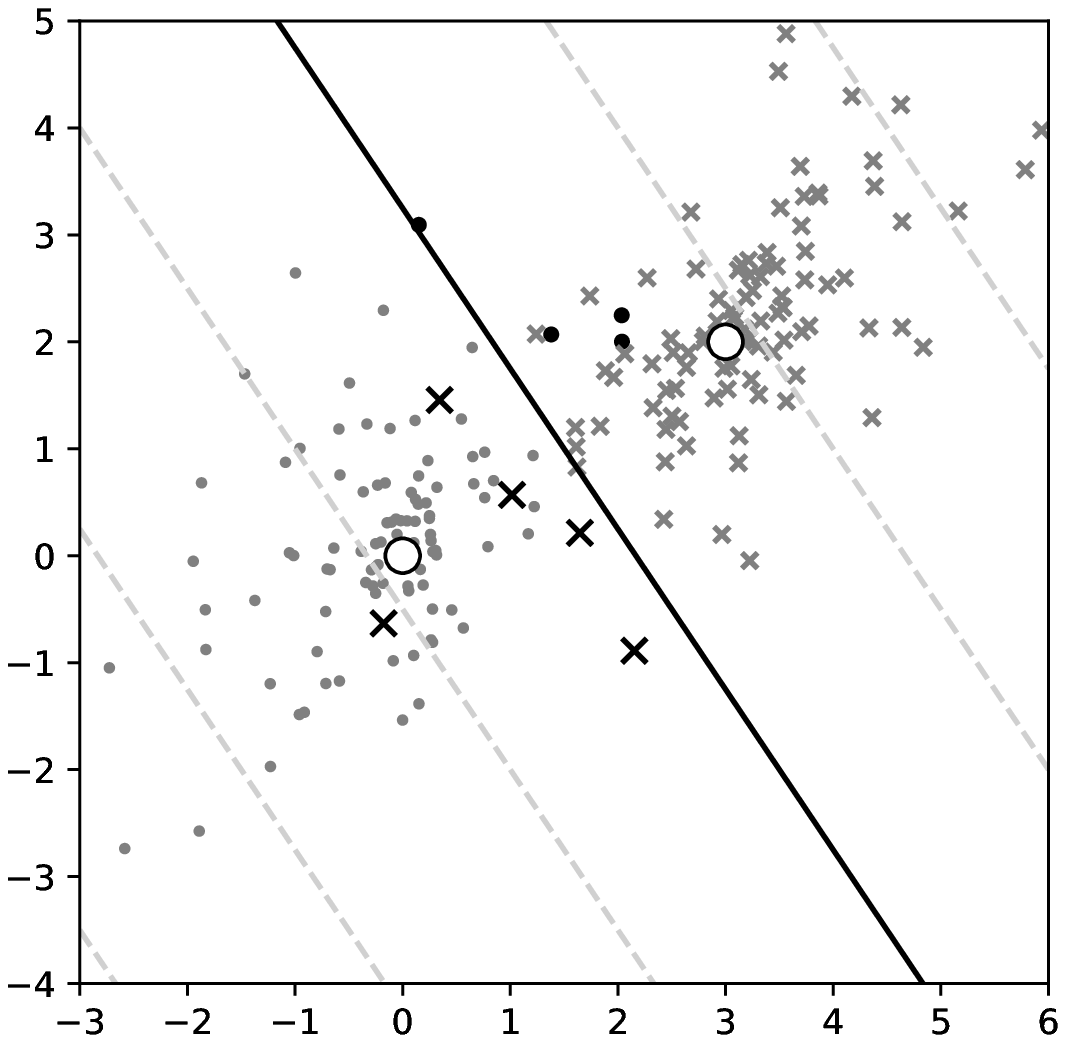}
    \includegraphics[width=.49\linewidth]{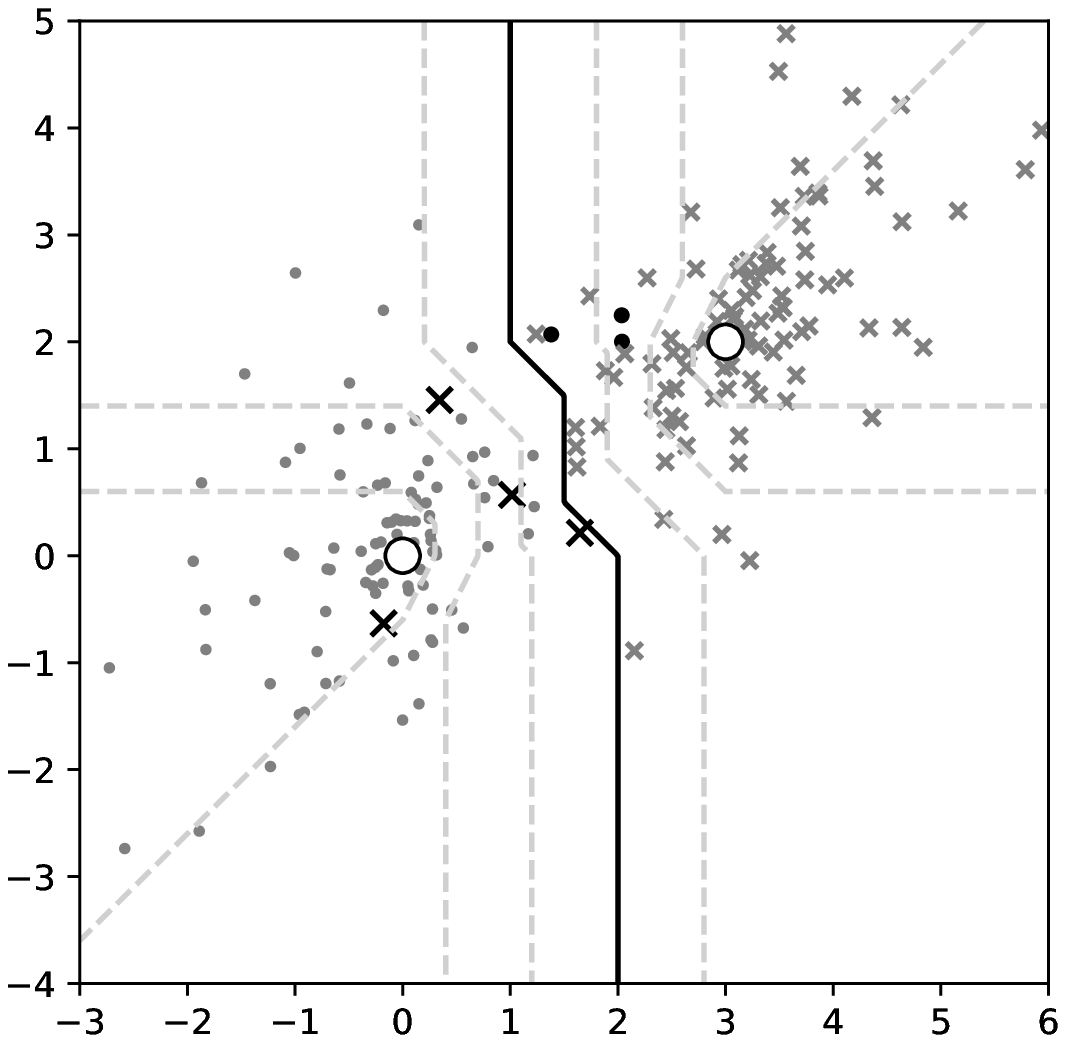}
    \caption{
    Scatterplot of points - dots for class $0$ and X for
    class $1$, black for misclassified according to 
    (left) {\bf classical logistic regression} or (right) {\bf tropical logistic regression},
    and grey otherwise - 
    alongside a contour plot of the probabilities, where the 
    black contour is the classification threshold.
    The centres, drawn as big white dots, 
    are $\omega_0 = (0,0,0), 
    \omega_1=(3,2,0)$ and $\sigma = 0.5$.}
    \label{fig:two_species_model_euclidean}
\end{figure}
\par
Varying the deviation ratio $\sigma_1/\sigma_0$ in the
data generation process allows exploration of its
effect on 
the generalization error in the one-species model.
The closer this ratio is to unity, the higher the 
generalization error. For $\sigma_0 = \sigma_1$ 
the classes are indistinguishable and hence any model
is as good as a random guess i.e. the generalization 
error is $1/2$. The estimate of the 
generalization error for every value of that ratio
is the proportion of misclassified points in both classes.
Assuming an inferred $\omega$ 
that differs from the true parameter, 
Fig.~\ref{fig:gen_error}(left)  
verifies the bounds of Proposition 
\ref{prop:gen_error_one_species}. 
\par 
For the two-species model, tropical logistic 
regression is directly compared to classical 
logistic regression. Data is generated using 
different centres 
$\omega_0 = (0,0,0),$ $\omega_1 = (3,2,0)$ but the same 
$\sigma=0.5$. 
The classifier is $C(x)=\mathbb{I}(h(x)>0)$ for both
methods, using $h$ as defined in equations
\eqref{eq:h_euclidean} and \eqref{eq:h_two_species}
for the classical and tropical logistic regression 
respectively.
Fig.~\ref{fig:two_species_model_euclidean} compares 
contours and classification thresholds of the classical (left) and tropical (right)
logistic regression by overlaying them on top of the same data.
Out of $100+100$ points 
there are $5+4$ and $4+3$ misclassifications 
in classical and tropical logistic regression respectively. 
Fig.~\ref{fig:gen_error}(right)
visualizes the misclassification 
rates of the two logistic regression methods for different values 
of dispersion $\sigma$, 
showing the tropical logistic regression to have consistently lower 
generalization error than the classical, even in this simple toy problem. \\
\par 
Finally, we investigate the convergence rate of the Fermat-Weber points and 
of the MLEs from the two-species model as the sample size $N$ increases. 
Fixing $\omega_0^* = (0,0,0)$ and $\omega_1^* = (3,2,0)$ as before, 
the Fermat-Weber point numerical solver and the log-likelihood optimization solver are 
employed to find $(\Tilde{\omega}_0)_N$ and 
$( (\Hat{\omega}_0)_N, (\Hat{\omega}_1)_N, \Hat{\lambda}_N )$ respectively.
From this, the error is computed for the two methods, which is defined as
$d_N = d_{\rm tr}((\omega_0)_N,\omega_0^*) $ for $(\omega_0)_N= (\Tilde{\omega}_0)_N$ and $(\Hat{\omega}_0)_N$ respectively. For each $N$, we repeat this procedure $100$ times to get an 
estimate of the mean error rate $r_N = 
\mathbb{E}\left( d_N \right)$.
Figure \ref{fig:asymptotic_error} shows that for both methods, $r_N \sqrt{N} \to C$ as $N \to \infty$, with $C_{\rm FW} < C_{\rm MLE}$. Since $\mathbb{E}(\sqrt{N} d_N) \to C$, it follows that
$\sqrt{N} d_N = \mathcal{O}_p(1)$ as $N \to \infty$. 
This supports the assumption of Section \ref{sec:optimization}
that Fermat-Weber points can be used in lieu of MLEs, since they converge to each other in probability at rate $1/\sqrt{N}$.
Interestingly, the MLEs produce higher errors than FW points. This may be due to an imperfection of the MLE solver, which may be stuck at a local optimum.

\begin{figure}
    \centering
    \includegraphics[width=.7\linewidth]{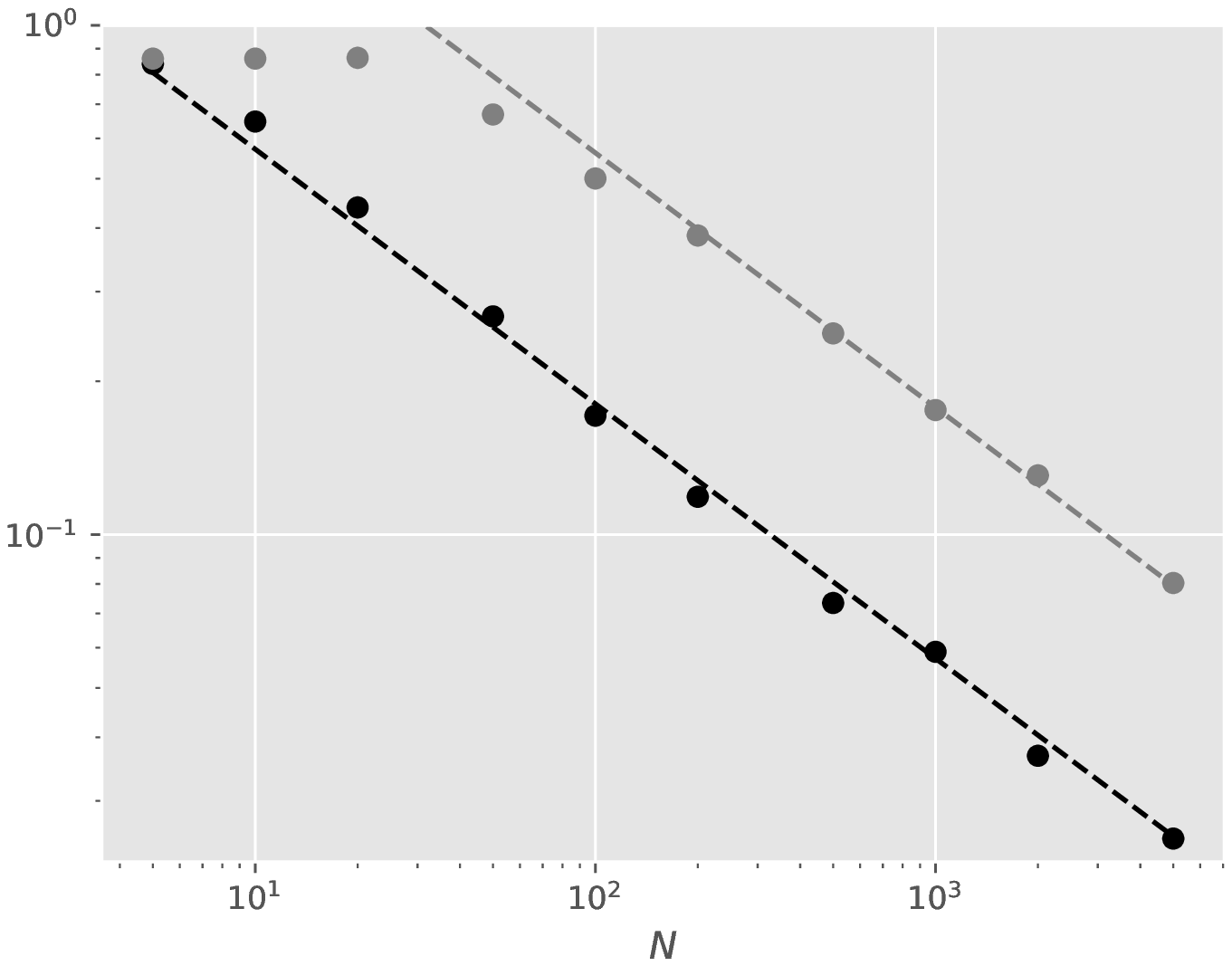}
    \caption{Expected asymptotic error for FW points $(\Tilde{\omega}_0)_N$ (in black) and MLE points 
    $(\Hat{\omega}_0)_N$ (in grey) for different values of $N$. 
    Error is defined as the tropical distance
    from the true centre $\omega_0^*$ i.e. $d_{\rm tr}(\omega_N,\omega_0^*)$.
    The dashed lines are $y \propto N^{-0.5}$, so this figure illustrates that 
    $d_{\rm tr}((\omega_0)_N,\omega_0^*) = \mathcal{O}_p(1/\sqrt{N})$ as $N\to \infty$.}
    \label{fig:asymptotic_error}
\end{figure}
\subsection{Coalescent Model} \label{sec:coalescent_model}
The
data that have been used in our simulations were generated 
under the multispecies coalescent model, 
using the python library \texttt{dendropy} 
\cite{sukumaran2010dendropy}.
The classification method we propose is the two-species model because two distinct 
species tree have been used to generate gene tree data for each class.
\par 
Two distinct species trees are used, which {\color{black}were} randomly generated  
under a Yule model. 
Then, using \texttt{dendropy}, $1000$ gene trees are randomly generated 
for each of the two species. 
The trees have $10$ leaves and so the number of the model variables is $\binom{10}{2} = 45$. 
They are labelled according to the species tree they are generated from.
The tree generation is
under the coalescent
model for specific model parameters. 
\par 
Since the species trees are known, we conduct a comparative 
analysis between classical, tropical and a
BHV-based (\cite{BHV})
logistic regression.
In the supplement, we show an approximation
analog of our model to the BHV metric.
The comparative analysis
includes the distribution fitting
of distances and
the misclassification rates
for different metrics.
\par 
In Fig.~\ref{fig:hist_dists},
the distribution
of the radius $d(X,\omega)$
as given by
Proposition \ref{prop:tropical_radius_dist},
is fitted to the histograms 
of the Euclidean and tropical distances 
of gene trees to their corresponding 
species tree, along with the corresponding pp-plots
on the right. 
According to Proposition \ref{prop:tropical_radius_dist},
for both the classical and tropical 
Laplace distributed covariates,
$d(X,\omega^*) \sim \sigma {\rm Gamma}(n)$, shown in solid lines 
in Fig.~\ref{fig:hist_dists}, where $n = e = 45$ 
and $n=e-1=44$ for the classical 
and tropical case respectively. 
Similarly, for normally distributed covariates,
$d(X,\omega^*) \sim \sigma \sqrt{\chi_{n}^2}$, shown in dashed lines.
It is clear that Laplacian distributions produce better fits in both 
geometries and that the tropical Laplacian fits the data best. 
As discussed in Section \ref{sec:model_selection}, 
the same analysis can not be applied to the BHV metric, 
because the condition of
Proposition \ref{prop:tropical_radius_dist} does not hold.
\par  
\textit{Species depth} $\text{SD}$
is the time since the speciation event between the species and
\textit{effective population size} $N$ quantifies 
genetic variation in the species. 
Datasets have been generated for a range of values
\(R := {\rm SD}/N \)
by varying species depth. 
For low values of $R$, 
speciation happens very recently and so the gene trees look 
very much alike. Hence, classification is hard for datasets with low values of
$R$ and vice versa, 
because the gene deviation $\sigma_R$ is a decreasing function of $R$.
We expect classification to improve in line with $R$.
Fig.~\ref{fig:rocs} and Fig.~\ref{fig:dist_hists} in Supplement \ref{sec:coalescent_graphs}  
confirm that, by showing that as $R$ increases
the receiver operating characteristic (ROC) curves
are improving and the Robinson-Foulds and tropical distances
of inferred (Fermat-Weber point)
trees are decreasing.
In addition, Fig.~\ref{fig:aucs} shows that as $R$ increases,
AUCs increase (left) and misclassification rates decrease (right).
It also shows that tropical logistic regression produces higher AUCs than classical 
logistic regression 
{\color{black} and other out-of-the-box ML classifiers such as random forest classifier, 
neural networks with a single sigmoid output layer and support vector machines. Our model also produces} 
 lower misclassification rates than both the BHV and 
classical logistic regression.
Finally, note that the generalization error upper bound as given in 
equation \eqref{eq:upper_bound_gen_error}
is satisfied but it not very tight 
(dashed line in Fig.~\ref{fig:aucs}). 

\begin{figure}
    \centering
    \includegraphics[width=0.9\textwidth]{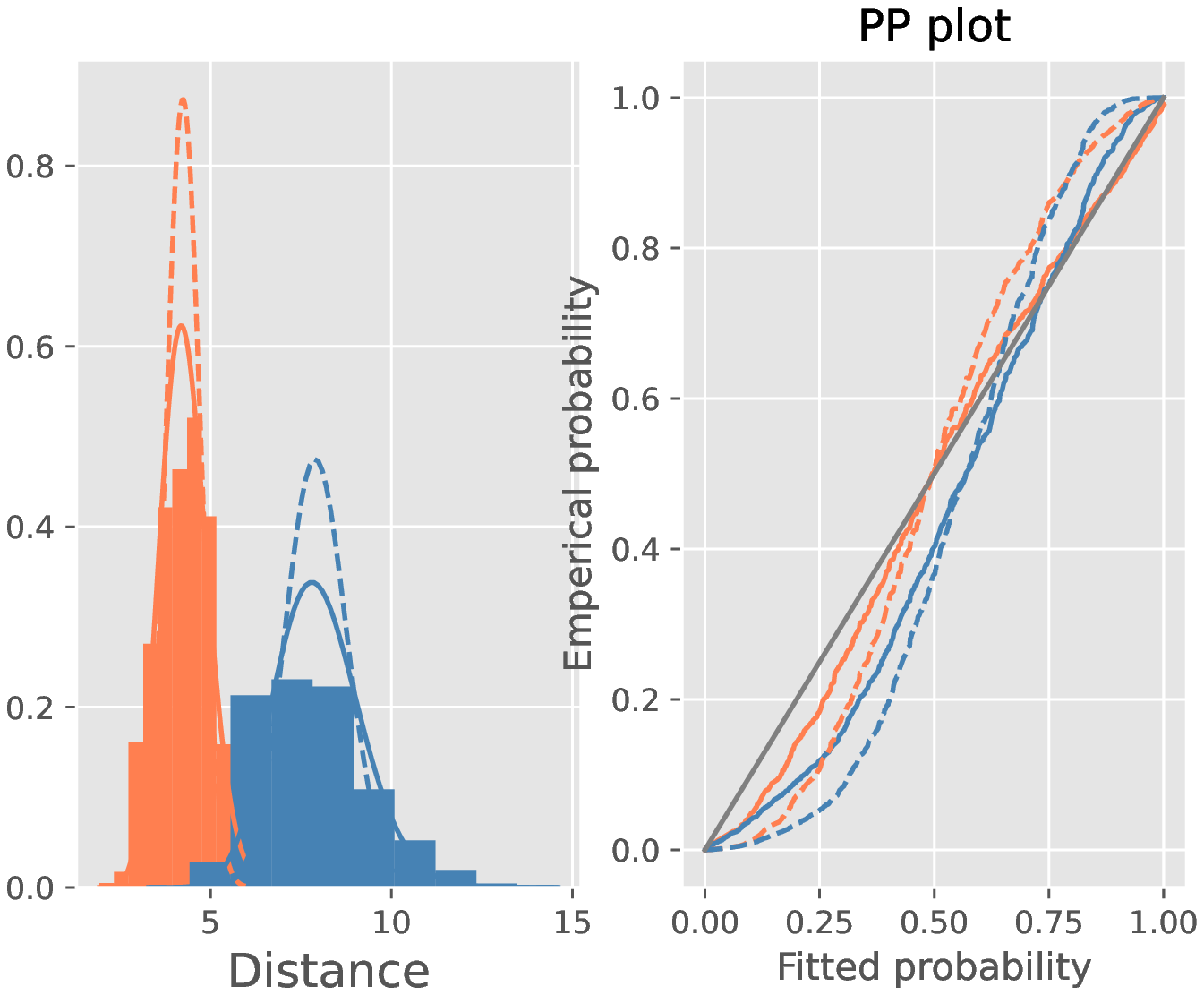}
    \caption{(Left) Histograms of the distances 
    of $1000$ 
    gene trees from the species trees that 
    generated them under the coalescent model
    with $R=0.7$.
    Coral and blue corresponds to tropical and euclidean 
    geometries respectively.
    The solid and dashed lines are 
    fitted distributions 
    $\sigma {\rm Gamma}(n)$  and 
    $\sigma \sqrt{\chi^2_{n}}$
    respectively; $\sigma$ 
    is chosen to be 
    the MLE, derived in the supplement.
    Euclidean metric has worse 
    fit than  
    the tropical metric.
    This can also be observed by the corresponding pp-plots (right). 
    }
    \label{fig:hist_dists}
\end{figure}

\begin{figure}[h]
    \centering
    \includegraphics[width=0.45\textwidth]{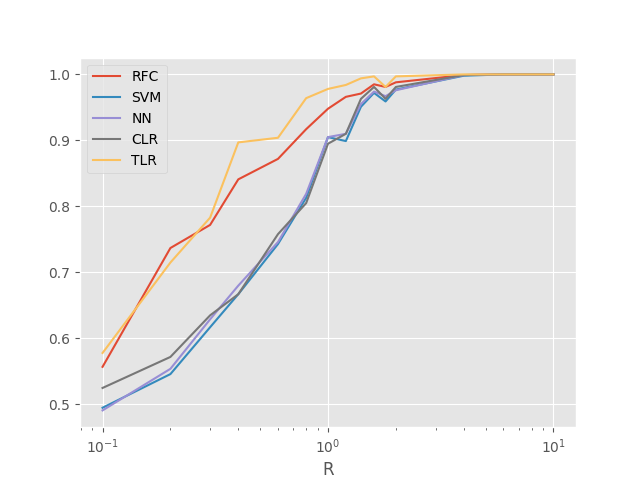}
    \includegraphics[width=.45\textwidth]{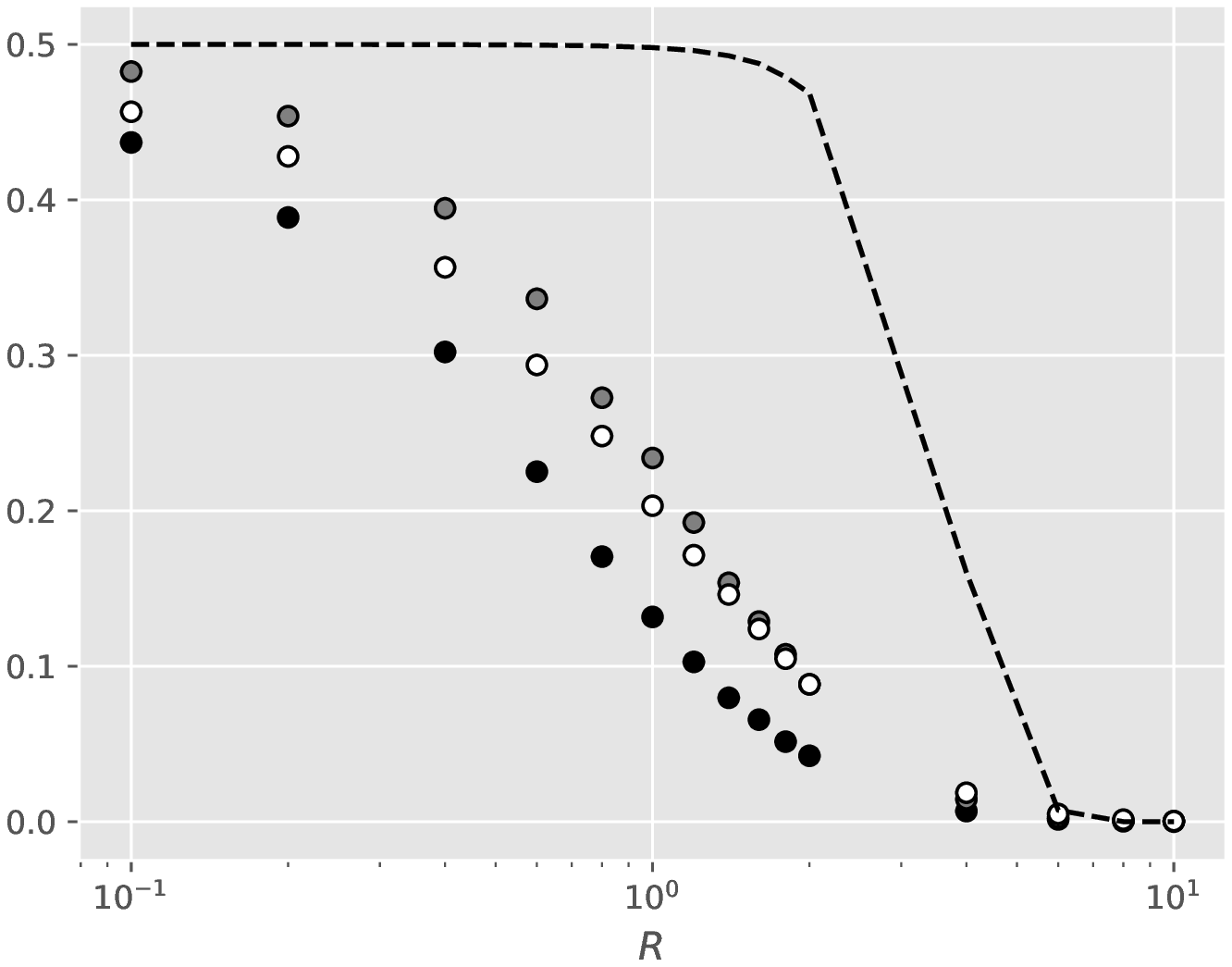}
    \caption{(left) 
    {\color{black} 
        Average AUCs against $R$. Five classification models which we considered are the tropical two species-tree model (TLR), random forest classifier (RFC), support vector machines (SVM), neural networks (NN) and classical logistic regression (CLR). We used default set up for TLR, SVM, NN and CLR implemented by \texttt{sklearn}.
    }
    (right) the x-axis represents the ratio $R$ and the y-axis represents misclassification rates. Black circles represent the tropical logistic regression, white circles represent the classical logistic regression,  grey points represent the logistic regression with BHV metric, and the dashed line represents the theoretical generalization error shown in Proposition \ref{prop:gen_error}.}
    \label{fig:aucs}
\end{figure}

\subsection{Convergence of Mr Bayes}\label{sec:mb}
\texttt{Mr Bayes} (\cite{huelsenbeck2001bayesian})
is a widely used software for Bayesian inference of 
phylogeny using MCMC to sample the target posterior
distribution. 
An important feature of the software is the diagnostic 
metrics indicating whether a chain has converged 
to the equilibrium distribution.
This is calculated at regular, specified intervals, set
by the variable \texttt{diagnfreq}, using the average 
standard deviation of split frequencies 
(ASDSF introduced by \cite{lakner2008efficiency})
between two independently run chains. 
The more similar the split frequencies between the 
two chains are, the lower the ASDSF, and the more likely 
it is that both chains have reached
the equilibrium distribution. \\
Our classification model provides an alternative convergence 
criterion for MCMC convergence. 
Consider two independently run chains; the sampled trees of the 
two chains correspond to two classes and the AUC value is a 
measure of how distinguishable the two chains are.  
High values of AUC are associated with easily distinguishable 
chains, implying that the chains have not converged to the 
equilibrium distribution.
At every iteration that is a multiple of \texttt{diagnfreq}, the ASDSF
metric is calculated and the AUC of the two chains is found 
by applying tropical logistic regression to the truncated chains
that only keep the last $30 \%$ of the trees in each chain.
\\ 
For our comparison study, the data used were the gene
sequences from the \texttt{primates.nex} file. 
This dataset comes with the \texttt{Mr Bayes} software 
and it is used as an example in 
\cite{ronquist2005mrbayes}.
Figure \ref{fig:primates} shows the two metrics at different iterations of the two independent chains ran on this dataset. According to the \texttt{Mr Bayes} manual, the convergence threshold for their metric is $10^{-2}$. This is achieved at the $800$-th iteration, when our method
produces an AUC of $97\%$, which indicates that the chains may have not converged yet, 
contrary to the suggestion of Mr Bayes. 
A likely explanation for this discrepancy is the
dependence of ASDSF on tree topologies instead of branch lengths. The frequencies of the tree topologies may have converged to those of the equilibrium distribution, even if the branch lengths have not. 
Eventually, the AUC values drop rapidly when iterations exceed $2\cdot 10^3$, while the ASDSF metric is reduced at a much slower rate. In this second phase, the branch lengths are calibrated, while the topology frequencies do not change a lot. 
Finally, for iterations that exceed  $10^5$, neither metric can reject convergence, with 
ASDSF being $10$ lower than the threshold and the AUC values finally dropping below $70\%$,
which is a typical threshold for poor classification.\\ 
{\color{black} When our classification method is compared to other classifiers, it marginally outperforms classical logistic regression and neural networks with a single sigmoid output but underperforms support vector machines and random forest classifiers. 
Despite their simplicity, logistic regression models cannot capture the complexity of the
chain classification problem. 
More advanced statistical methods that conform to tropical geometry (such as tropical support vector machines \cite{YOSHIDA202377})
could be applied instead at the cost of simplicity and interpretability. 
}

\begin{figure}[h!]
    \centering
    \includegraphics[width=.8\linewidth]{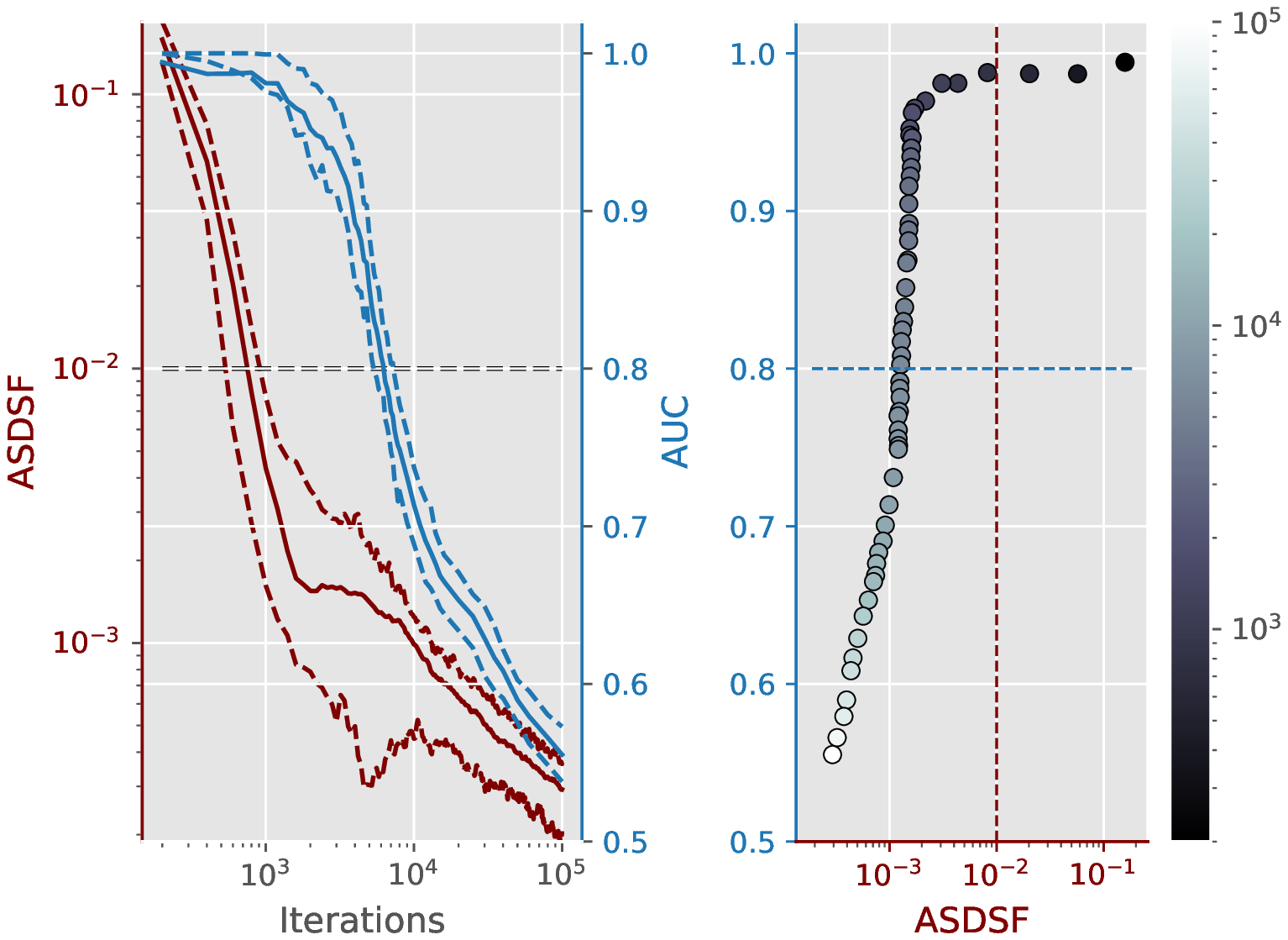}
    \caption{(Left) Average ASDSF (in red) and AUC (in blue) values plotted against the number of iterations of the MCMC chains. The coloured dashed lines correspond to the first and third quartile. 
    The grey dashed line indicates the \texttt{Mr Bayes}
    threshold for ASDSF and our provisional AUC threshold of $80\%$. (Right) ASDSF and AUC values plotted against each other, with the iterations coloured according to the colourbar and the dashed lines 
    corresponding to the thresholds for each metric.}
    \label{fig:primates}
\end{figure}

\begin{figure}[h!]
    \centering
    \includegraphics[width=.6\linewidth]{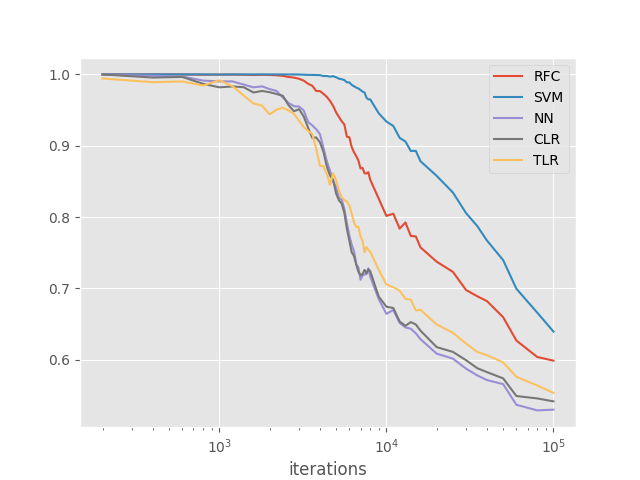}
    \caption{\color{black} Average AUC values plotted against the number of MCMC iterations for the 5 supervised learning methods considered.}
    \label{fig:mb_aucs_comparison}
\end{figure}

\section{Discussion} \label{sec:discussion}

In this paper we developed a tropical analog of the classical logistic regression model 
and considered two special cases; the
one species-tree model and two species-tree model.
In our empirical work the two-species model was most effective, but we anticipate both are potentially impactful tools for phylogenomic analysis. 
The one-species model's principal benefit is having the same 
number of parameters as the number of predictors, unlike the two-species model which has 
almost twice as many. Therefore, {\color{black}the one-species model} more readily fits the standard definition of a generalized linear model
and could generalize to a stack of GLMs to produce a {\color{black}``}tropical{\color{black}''} neural network, which is investigated in 
\cite{yoshida2023tropical}.
\par 
The two-species model implemented on data generated under the coalescent model
outperformed classical and BHV logistic regression models in terms of 
misclassification rates, AUCs and fitness of the distribution of distances to their 
centre. It was also observed that Laplacian distributions were better fitting than Gaussians, 
for both geometries.
Empirically selecting tropical distributions over Euclidean distributions suffices for the scope of this paper, but further theoretical justification of the suitability of such distributions is needed.
Moreover, further research on the generalization error for the two-species model would provide 
tighter bounds. 
\par 
Finally, the AUC metric of our model is proposed as an alternative to the ASDSF metric for MCMC convergence checking.
Our metric is more conservative and robust, taking 
branch lengths into account. 
Nonetheless, computing the ASDSF is less computationally 
intensive than running our method. 
There seems to be a tradeoff between 
the reliability of the convergence criterion tool and 
computational speed. 
Further research can shed light on the types of datasets where the ASDSF metric becomes unreliable. Then, the two metrics could complement each other, with our methods applied only when there is a good indication that ASDSF is unreliable.


\section*{Acknowledgments}

RY is partially funded by NSF Division of Mathematical Sciences: Statistics Program DMS 1916037. GA is funded by EPSRC through the STOR-i Centre for Doctoral Training under grant EP/L015692/1. 

\section*{Declarations}

Some journals require declarations to be submitted in a standardised format. Please check the Instructions for Authors of the journal to which you are submitting to see if you need to complete this section. If yes, your manuscript must contain the following sections under the heading `Declarations':

\begin{itemize}
\item Funding: RY is partially funded by NSF Division of Mathematical Sciences: Statistics Program DMS 1916037. GA is funded by EPSRC through the STOR-i Centre for Doctoral Training under grant EP/L015692/1. 
\item Conflict of interest: No conflict of interest.
\item Ethics approval: NA
\item Consent to participate: NA
\item Consent for publication: NA
\item Availability of data and materials: DRYAD with DOI: 10.5061/dryad.tht76hf65
\item Code availability: DRYAD with DOI: 10.5061/dryad.tht76hf65
\item Authors' contributions: GA contributed theoretical work and computations.  RY directed this project.  BB and JG supervised GA.
\end{itemize}

\appendix
\section{Proofs} \label{sec:proofs}
\begin{proof}[{\bf Proof of Lemma \ref{lemma:p_bern}}]
A simple application of the Bayes rule for continuous
random variables yields 
\begin{align*}
    p(x) = \mathbb{P}(Y=1|X=x) &= 
    \frac{f_1(x) \mathbb{P}(Y=1)}
    {f_0(x) \mathbb{P}(Y=0) + f_1(x) \mathbb{P}(Y=1)} \\
    &= 
    \frac{1}
    {1 + \frac{f_1(x) (1-r)}{f_0(x) r} } = S(h(x)). 
\end{align*}
\end{proof}

\begin{proof}[{\bf Proof of Proposition \ref{prop:p_functional}}]
    The expected log-likelihood is expressed as 
    \begin{align*}
    \mathbb{E}(l) & = 
    \mathbb{E}\left(
        Y \log(p(X)) + (1-Y) \log(1-p(X))
    \right)
    \\ & = 
    \mathbb{P}(Y=1) \int_{\mathbb{R}^n}
    f_1(x)
    \log(p(x)) \, dx \\&
    + 
    \mathbb{P}(Y=0) \int_{\mathbb{R}^n}
    f_0(x) 
    \log(1-p(x)) \,
    dx \\ & = 
    \int_{\mathbb{R}^n} L(x,p(x)) \, dx ,
\end{align*}
where 
$L(x,p) = r f_1(x) \log(p) + (1-r) f_0(x) \log(1-p)$
is treated as the Lagrangian. 
The Euler-Lagrange equation can be generalized to 
a several variables 
(in our case there are $n$ variables). 
Since there are no derivatives of $p$, the stationary 
functional satisfies $\partial_p L = 0$, which yields 
the desired result. 
\end{proof}

\begin{proof}[{\bf Proof of Proposition \ref{prop:tropical_radius_dist}}]
The pdf of $X$ is 
\begin{equation*}
    f_{\omega}(x) = \frac{1}{C_{\alpha}} 
    \exp\left(-\alpha^i \frac{d^i(x)}{i} \right), x \in \mathbb{R}^n 
\end{equation*}
where $\alpha = \sigma^{-1}$ is the precision. 
Using the variable transformation $y = \alpha x$ with Jacobian 
$1/\alpha^n$
and 
remembering that $\alpha d(x) = d(y)$, 
\begin{equation*}
    C_{\alpha} = \int_{\mathbb{R}^n} \exp\left(
        -\alpha^i \frac{d^i(x)}{i}
    \right) \, dx =
    \int_{\mathbb{R}^n} \exp\left(
        - \frac{d^i(x)}{i}
    \right) \, \frac{dy}{\alpha^n}= 
    \frac{C_1}{\alpha^n}.
\end{equation*}
The moment generating function of $d^i(X)$ is 
\begin{align*}
    M_{d^i(X)} &= \int_{\mathbb{R}^n} \exp \left(
        z d^i(x)
    \right) \frac{\exp \left(
        - \alpha^i \frac{d^i(x)}{i} 
    \right)}{C_{\alpha}} \, dx \\ &= 
    \frac{C_{\sqrt[i]{\alpha^i/i - z} }}{C_\alpha} = 
    \frac{1}{\left(\sqrt[i]{1 - i\sigma^i z} \right)^n},
\end{align*}
which coincides with the MGF of $\Gamma(n/i,i\sigma^i)$.
\end{proof}

\begin{proof}[{\bf Proof of Proposition \ref{prop:normalizing_factor}}]
From the proof of Proposition 
\ref{prop:tropical_radius_dist}, 
it was established that 
the normalizing constant is
$C_{\sigma_Y} = C_{1} \sigma_Y^{e-1}$ for the tropical projective 
torus, whose dimension is $n=e-1$. 
\par 
The volume of a unit tropical sphere in 
the tropical projective torus 
$\mathbb{R}^e/\mathbb{R}{\bf 1}$ is equal to $e$.
If the tropical radius is $r$, then the volume is 
$e r^{e-1}$ and hence the surface area is 
$e (e-1) r^{e-2}$. Therefore, 
\begin{align*}
    C_1 &= \int_{\mathbb{R}^e/\mathbb{R}{\bf 1}} 
    \exp{(-d_{\rm tr}(x,{\bf 0}))} dx \\ &= 
    \int_0^{\infty} e (e - 1) r^{e-2} 
    \exp{(-r)}
    \, dr \\ & = 
    e(e-1) \Gamma(e-1) = e!
\end{align*}
It follows that the normalizing constant is 
$C_{\sigma_Y} = e! \sigma_Y^{e-1}$. 
\end{proof} 

\begin{proof}[{\bf Proof of Corollary \ref{cor}}]
    Suppose that $X$ comes from the Laplace or the 
    Normal distribution, whose pdf is proportional to 
    $\exp{(-d^i(x,\omega^*)/(i\sigma^{i})) }$ for $i=1$ and $2$ respectively,
    for all $x \in \R^{n}$ 
    where $d$ is the Euclidean metric.
    Then, $X-\omega^*$ has a distribution proportional to 
    $\exp{(-d^i(x,{\bf 0})/(i\sigma^{i}))}$. 
    Clearly, $\alpha d(x,{\bf 0}) = d(\alpha x,{\bf 0})$ and so from Proposition
    \ref{prop:tropical_radius_dist}, it follows that
    $d^i(X-\omega^*,{\bf 0}) = d^i(X,\omega^*) \sim i\sigma^i {\rm Gamma}(n/i)$.
    Note that for the normal distribution ($i=2$), 
    $d^i(X,\omega^*) \sim \sigma^2 \chi_{n/2}$.
    The same argument applies for tropical Laplace and tropical Normal 
    distributions, where the metric is tropical ($d=d_{\rm tr}$), 
    the distribution is defined on $\trtor \cong \mathbb{R}^{e-1}$ 
    and the dimension is hence $n=e-1$.
\end{proof}

{\bf Prerequisites for proof of Theorem \ref{prop:consistency_of_estimators}}
\begin{theorem}  (Theorem 4.2.1 in \cite{bierens1996topics})
    \label{thm:convergence_of_theta_n}
    Let $(Q_n(\theta))$ be a sequence of random functions on a compact set 
    $\Theta \subset \mathbb{R}^m$ such that for a continuous real function
    $Q(\theta)$ on $\Theta$, 
    \begin{equation*}
        \sup_{\theta \in \Theta} 
        |Q_n(\theta) - Q(\theta)| 
        \overset{p}{\to}
        0 \text{ as }
        n \to \infty.
    \end{equation*}
    Let $\theta_n$ be any random 
    vector in $\Theta$ satisfying 
    $Q_n(\theta_n) = 
    \inf_{\theta in \Theta}
    Q_n(\theta)$
    and let $\theta_0$ be a unique 
    point in $\Theta$ such that 
    $Q(\theta_0) = 
    \inf_{\theta \in \Theta}
    Q(\theta)$. 
    Then $\theta_n 
    \overset{p}{\to} 
    \theta_0$.
\end{theorem}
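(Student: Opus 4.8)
The plan is to prove convergence in probability directly from its definition: it suffices to show that for every $\delta > 0$ we have $\mathbb{P}(\|\theta_n - \theta_0\| \geq \delta) \to 0$ as $n \to \infty$. The strategy is a ``sandwich'' argument that plays the uniform convergence hypothesis against the fact that $\theta_0$ is the \emph{unique} minimizer of the limit function $Q$.

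First I would establish a well-separation gap. Fix $\delta > 0$ and consider the set $K_\delta = \{\theta \in \Theta : \|\theta - \theta_0\| \geq \delta\}$. Since $\Theta$ is compact and $K_\delta$ is closed, $K_\delta$ is compact. If $K_\delta$ is empty the claim is trivial, so assume otherwise; then the continuous function $Q$ attains its infimum on $K_\delta$ at some point $\theta^\star$, and by the uniqueness of $\theta_0$ as the global minimizer we have $Q(\theta^\star) > Q(\theta_0)$. Setting $\eta := Q(\theta^\star) - Q(\theta_0) > 0$ yields the key separation: every $\theta$ at distance at least $\delta$ from $\theta_0$ satisfies $Q(\theta) \geq Q(\theta_0) + \eta$.

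Next I would introduce the good event $A_n = \{\sup_{\theta \in \Theta} |Q_n(\theta) - Q(\theta)| < \eta/2\}$, which by the uniform convergence hypothesis satisfies $\mathbb{P}(A_n) \to 1$. On $A_n$ I claim $\theta_n$ cannot lie in $K_\delta$. Indeed, the minimizer property $Q_n(\theta_n) \leq Q_n(\theta_0)$ together with uniform closeness gives $Q_n(\theta_n) \leq Q_n(\theta_0) < Q(\theta_0) + \eta/2$, whereas if $\theta_n \in K_\delta$ then the separation bound and uniform closeness give $Q_n(\theta_n) > Q(\theta_n) - \eta/2 \geq Q(\theta_0) + \eta/2$, a contradiction. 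Hence on $A_n$ we must have $\|\theta_n - \theta_0\| < \delta$, so $\mathbb{P}(\|\theta_n - \theta_0\| < \delta) \geq \mathbb{P}(A_n) \to 1$. Since $\delta > 0$ was arbitrary, this establishes $\theta_n \overset{p}{\to} \theta_0$.

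The main obstacle, and the conceptual heart of the argument, is the well-separation step, where all three hypotheses are simultaneously needed. Uniqueness alone does not produce a uniform gap; it is the compactness of $K_\delta$ together with the continuity of $Q$ that upgrades the pointwise strict inequality $Q(\theta) > Q(\theta_0)$ for $\theta \neq \theta_0$ into the uniform gap $Q(\theta) \geq Q(\theta_0) + \eta$ across all of $K_\delta$. Once this gap is secured, the remaining sandwich estimate is routine book-keeping with the triangle inequality.
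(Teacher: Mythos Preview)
Your proof is correct and is the standard argument for this classical consistency result. Note, however, that the paper does not supply its own proof of this theorem: it is simply cited from \cite{bierens1996topics} and used as a black-box tool in the proofs of Theorem~\ref{prop:consistency_of_estimators} and Proposition~\ref{prop:consistency_of_Fermat_Weber_points}. So there is no paper proof to compare against; what you have written is essentially the textbook proof, and it would serve perfectly well as a self-contained justification if one wished to include it.
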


\begin{theorem}  (Lemma 2.4 in \cite{newey1994large})
    \label{thm:uniform_law_of_large_numbers}
    If the data $z_1,\dots,z_n$
    are independent and identically distributed, 
    the parameter space $\Theta$ is compact, 
    $f(z_i,\theta)$ is continuous at each $\theta \in \Theta$
    almost surely and there is $r(z) \geq |f(z,\theta)|$ for all 
    $\theta \in \Theta$ and $\mathbb{E}(r(z))<\infty$, then 
    $\mathbb{E}(f(z,\theta))$ is continuous and 
    \begin{equation*}
        \sup_{\theta \in \Theta}
        \left|
            n^{-1} \sum_{i=1}^{n} f(z_i,\theta) - 
            \mathbb{E}(f(z,\theta))
        \right|
        \overset{p}{\to}
        0.
    \end{equation*}
\end{theorem}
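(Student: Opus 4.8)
The plan is to prove this classical uniform law of large numbers by combining the pointwise weak law with a compactness-plus-equicontinuity argument. Write $Q_n(\theta) = n^{-1}\sum_{i=1}^n f(z_i,\theta)$ and $Q(\theta)=\mathbb{E}(f(z,\theta))$, and note that $Q$ is finite and well defined because $|f(z,\theta)|\le r(z)$ with $\mathbb{E}(r(z))<\infty$. First I would establish that $Q$ is continuous on $\Theta$: for any sequence $\theta_k\to\theta$, almost-sure continuity gives $f(z,\theta_k)\to f(z,\theta)$ a.s., and since $|f(z,\theta_k)|\le r(z)$ is integrable, the dominated convergence theorem yields $Q(\theta_k)\to Q(\theta)$.

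The core of the argument is a local oscillation bound. For $\theta\in\Theta$ and $\delta>0$, define the modulus
\[
    w(z,\theta,\delta)=\sup_{\theta'\in B(\theta,\delta)\cap\Theta}\bigl|f(z,\theta')-f(z,\theta)\bigr|,
\]
where $B(\theta,\delta)$ is the open ball of radius $\delta$. Since $\Theta\subset\mathbb{R}^m$ is separable and $f(z,\cdot)$ is continuous almost surely, this supremum may be computed over a countable dense subset of $B(\theta,\delta)\cap\Theta$ and is therefore measurable in $z$. By almost-sure continuity, $w(z,\theta,\delta)\downarrow 0$ as $\delta\downarrow 0$, while $w(z,\theta,\delta)\le 2r(z)$ is integrable, so dominated convergence gives $\mathbb{E}(w(z,\theta,\delta))\to 0$ as $\delta\to 0$.

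Next I would exploit compactness. Fix $\epsilon>0$. For each $\theta$ choose $\delta_\theta>0$ with $\mathbb{E}(w(z,\theta,\delta_\theta))<\epsilon$; the balls $\{B(\theta,\delta_\theta)\}_{\theta\in\Theta}$ cover the compact set $\Theta$, so a finite subcover $B(\theta_1,\delta_1),\dots,B(\theta_K,\delta_K)$ exists. For any $\theta$ lying in the $j$-th ball, the triangle inequality together with the oscillation bound gives
\[
    |Q_n(\theta)-Q(\theta)|\le \frac{1}{n}\sum_{i=1}^n w(z_i,\theta_j,\delta_j)+|Q_n(\theta_j)-Q(\theta_j)|+\mathbb{E}(w(z,\theta_j,\delta_j)).
\]
The third summand is below $\epsilon$ by construction; the second converges to zero in probability by Khinchine's weak law applied to the i.i.d.\ integrable variables $f(z_i,\theta_j)$; and the first converges in probability to $\mathbb{E}(w(z,\theta_j,\delta_j))<\epsilon$ by the weak law applied to the i.i.d.\ integrable variables $w(z_i,\theta_j,\delta_j)$. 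Because the right-hand side no longer depends on $\theta$ (only on $j$), taking the supremum over the $j$-th ball and then the maximum over the finitely many indices $j$ yields $\sup_{\theta\in\Theta}|Q_n(\theta)-Q(\theta)|\le \max_j\bigl(n^{-1}\sum_i w(z_i,\theta_j,\delta_j)+|Q_n(\theta_j)-Q(\theta_j)|\bigr)+\epsilon$. The maximand converges in probability to $\max_j\mathbb{E}(w(z,\theta_j,\delta_j))<\epsilon$, so the full supremum exceeds $2\epsilon$ with probability tending to zero; letting $\epsilon\downarrow 0$ finishes the proof.

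I expect the main obstacle to be the measurability of the oscillation modulus $w(z,\theta,\delta)$ and, relatedly, the verification that its expectation vanishes as $\delta\to 0$. Separability of $\Theta$ resolves the measurability issue by reducing the uncountable supremum to a countable one, and the integrable envelope $r$ makes the dominated convergence step for $\mathbb{E}(w)$ legitimate. Once these two technical points are secured, the finite subcover reduces the entire statement to finitely many applications of the classical pointwise weak law, which is routine.
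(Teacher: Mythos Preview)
Your argument is correct and is essentially the standard proof of this uniform law of large numbers: dominated convergence for continuity of $Q$, a local oscillation modulus controlled by the integrable envelope, a finite subcover from compactness, and the pointwise weak law at the finitely many centres. The measurability concern you flag is real and your resolution via separability of $\Theta\subset\mathbb{R}^m$ is the right one.

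However, there is nothing to compare against in the paper. The statement is listed under ``Prerequisites for proof of Theorem~\ref{prop:consistency_of_estimators}'' and is simply quoted as Lemma~2.4 of Newey and McFadden; the paper provides no proof of its own and uses the result as a black box inside the consistency argument. Your write-up is, in fact, a faithful reconstruction of the Newey--McFadden proof itself, so in that sense you have reproduced exactly what the cited reference does.
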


\begin{lemma} 
\label{lemma:derivative_of_tropical_distance}
Consider two points $x,y \in
\mathbb{R}^e/\mathbb{R}{\bf 1} $. 
There exists $\eta > 0$ 
such that 
\begin{equation*}
    d_{\rm tr}(x+\epsilon E_i,y) = 
    d_{\rm tr}(x,y) + 
    \epsilon \phi_i(x-y)
    ,\, 
    \forall \epsilon \in [0,\eta], \,
    \forall i \in [e], 
    \text{ where }
\end{equation*}
\begin{equation}
    \label{eq:phi}
    \phi_i(v) = \begin{cases}
        1, & \text{if } 
        v_i \geq v_j \, \forall j \in [e] \\
        -1, & v_i < v_j \, \forall j \in [e] \backslash \{ i \} \\
        0, & \text{otherwise } 
        \end{cases},
\end{equation}
and $E_i \in \mathbb{R}^e/\mathbb{R}{\bf 1}$ is a vector with $1$ in the $i$-th coordinate and $0$ elsewhere. 
\end{lemma}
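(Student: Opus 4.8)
The plan is to reduce the statement to an elementary case analysis by translating. Set $v := x - y \in \trtor$, so that $d_{\rm tr}(x,y) = \max_j v_j - \min_j v_j$ and $x + \epsilon E_i - y = v + \epsilon E_i$; thus it suffices to analyse $g(\epsilon) := \max_j (v + \epsilon E_i)_j - \min_j (v + \epsilon E_i)_j$ for small $\epsilon \geq 0$, noting that only the $i$-th coordinate is perturbed. First I would fix the threshold once and for all: let $\eta$ be the smallest strictly positive number of the form $v_j - v_k$ with $j,k \in [e]$ (and $\eta := 1$ if all coordinates of $v$ agree). Crucially $\eta$ does not depend on $i$, which is what the statement requires, and throughout I assume $e \geq 2$ (for $e = 1$ the quotient is a point and there is nothing to prove).

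Next I would split into the three cases that define $\phi_i$. If $v_i \geq v_j$ for all $j$ (so $\phi_i(v) = 1$), then for every $\epsilon \geq 0$ the perturbed $i$-th coordinate $v_i + \epsilon$ is still a maximum, so the new maximum is $v_i + \epsilon$, while the minimum is attained at some coordinate other than $i$ (or stays equal to $v_i$ when all coordinates coincide) and is therefore unchanged; hence $g(\epsilon) = g(0) + \epsilon$. If $v_i < v_j$ for all $j \neq i$ (so $\phi_i(v) = -1$), then $v_i$ is the strict unique minimum; for $\epsilon \in [0,\eta]$ we have $v_i + \epsilon \leq v_i + \eta \leq \min_{j \neq i} v_j$ because $\min_{j\neq i} v_j - v_i$ is one of the positive gaps, so coordinate $i$ still attains the minimum, giving new minimum $v_i + \epsilon$, while the maximum (attained at some $j \neq i$) is unaffected; hence $g(\epsilon) = g(0) - \epsilon$. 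In the remaining case ($\phi_i(v) = 0$), $v_i$ is not a maximum — so $\max_j v_j$ is attained at some coordinate $\neq i$ and exceeds $v_i$ by at least $\eta$ — and $v_i$ is not the strict unique minimum — so $\min_j v_j$ is attained at some coordinate $\neq i$ and is $\leq v_i$; for $\epsilon \in [0,\eta]$ neither the maximum nor the minimum is disturbed, so $g(\epsilon) = g(0)$.

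These three cases are exhaustive and mutually exclusive, and in each of them $g(\epsilon) - g(0) = \epsilon\, \phi_i(v)$ for all $\epsilon \in [0,\eta]$, which is precisely the claim. I do not expect a genuine obstacle; the only points demanding care are the degenerate situation in which all coordinates of $v$ coincide (it lands in the first case with $d_{\rm tr}(x,y) = 0$), the handling of ties at the maximum or minimum (which is why the first case uses the non-strict inequality $v_i \geq v_j$ while the second uses the strict $v_i < v_j$), and checking that the single threshold $\eta$ indeed works simultaneously for every $i \in [e]$, as the statement demands.
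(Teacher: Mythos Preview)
Your proof is correct and follows essentially the same three-case analysis as the paper's own argument, reducing to $v=x-y$ and distinguishing whether $v_i$ is a maximum, the strict unique minimum, or neither. The only cosmetic difference is your choice of $\eta$ as the smallest strictly positive coordinate gap (versus the paper's $\min(m'-m,\,M-M')$), which is a slightly smaller but equally valid uniform threshold.
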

\begin{proof}
By setting $v:=x-y$, $M := \max_{j \in [e]} \{ v_j \}$ and 
$m := \min_{j \in [e]} \{ v_j \}$, 
\begin{align*}
     d_{\rm tr}(x,y) &= M - m \\
     d_{\rm tr}(x+\epsilon E_i,y) &= 
    \max_{j\in[e]} \{ v_j + \epsilon \delta_{ij} \} - 
    \min_{j\in[e]} \{ v_j + \epsilon \delta_{ij} \},
\end{align*}
where $\epsilon \geq 0$, and $\delta_{ij}=\mathbb{I}(i=j)$ with $\mathbb{I}$
being the indicator function.
Three separate cases 
are considered.
\begin{enumerate}[i.]
    \item If $v_i = M$, then
    \begin{align}
        \max_{j\in[e]} \{ v_j + \epsilon \delta_{ij} \} 
        & = v_i + \epsilon = 
        M +\epsilon, 
        \label{eq:l1_1_a} \\ 
        \min_{j\in[e]} \{ v_j + \epsilon \delta_{ij} \} 
        & = m,
        \label{eq:l1_1_b}
    \end{align}
    and so $d_{\rm tr}(x+\epsilon E_i,y) = 
    d_{\rm tr}(x,y) + \epsilon$. 
    Note that equations \eqref{eq:l1_1_a} and \eqref{eq:l1_1_b}
    hold for all $\epsilon > 0$. 

    \item 
    If 
    $v_i = m$ {\bf and} 
    $v_i < v_k$ for all $k \neq i$, 
    i.e. if $v_i$ is the {\bf unique} minimum component of vector $v$, 
    then 
    \begin{align}
        \max_{j\in[e]} \{ v_j + \epsilon \delta_{ij} \}  = 
        M, &
        \text{ for all } 
        \epsilon \leq M-m
        \label{eq:l1_2_a}
        \\ 
        \min_{j\in[e]} \{ v_j + \epsilon \delta_{ij} \}  =
        v_i + \epsilon = 
        m + \epsilon, &
        \text{ for all } 
        \epsilon \leq m' - m,
        \label{eq:l1_2_b}
    \end{align}
    where $m' \coloneqq \min_{j: v_j > m} \{ v_j \} > m$
    is well-defined unless 
    $v_j = m$ for all $j \in [e]$ 
    i.e. for $v = m \cdot (1,\dots,1) = {\bf 0}$,
    which falls under the first case. 
    Clearly, $M \geq m'$, so for all 
    $\epsilon \in [0, m'-m]$  
    equations \eqref{eq:l1_2_a} and \eqref{eq:l1_2_b} 
    are satisfied and thence 
    $d_{\rm tr}(x+\epsilon E_i,y)=d_{\rm tr}(x,y) - \epsilon$.
    \item 
    Otherwise, if none of the first two cases hold then 
    $\exists k \neq i$ such that 
    $m = v_k \leq v_i < M$
    and so 
    \begin{align}
        \min_{j \in [e]} \{ v_j + \epsilon \delta_{ij} \} = 
        v_k = 
        m
        &, \text{ for all }
        \epsilon > 0 
        \label{eq:l1_3_a}
        \\ 
        \max_{j \in [e]} \{ v_j + \epsilon \delta_{ij} \} = 
        M
        &, \text{ if }
        \epsilon \leq M - v_i  
        \label{eq:l1_3_b}
    \end{align}
    Define $M' \coloneqq \max_{j: v_j < M} \{ v_j \}<M$ 
    which is well-defined 
    for all $v \neq {\bf 0}$ (first case). 
    Since $v_i < M$, it follows by definition that
    $v_i \leq M'$ and so
    $M - v_i \geq M - M' > 0 $. 
    As a result, for all $\epsilon \in [0, M - M']$, 
    equations \eqref{eq:l1_3_a} and \eqref{eq:l1_3_b}
    are satisfied and thence
    $d_{\rm tr}(x+\epsilon E_i,y) = d_{\rm tr}(x,y)$.
\end{enumerate}
If $v={\bf 0}$, set $\eta = + \infty$. 
Otherwise, for $v \neq {\bf 0}$, with $m',M'$ being well-defined, 
set 
\[
    \eta = \min( m'- m , M - M' ) > 0. 
\]
In all three cases and for all $\epsilon \in [0,\eta]$ the desired 
result is satisfied.
\end{proof}

\begin{lemma} 
\label{lemma:uniqueness_of_optimum}
Consider the function $q: \mathbb{R}^e/\mathbb{R}{\bf 1} \rightarrow \mathbb{R}$, 
\begin{align*}
    q(x) &= \lambda_\alpha d_{\rm tr}(x,\alpha) - \lambda_\beta d_{\rm tr}(x,\beta) 
      -\lambda_\gamma d_{\rm tr}(x,\gamma) + \lambda_\delta d_{\rm tr}(x,\delta) \\&+ 
      \log\left( 
        \frac{\lambda_\beta}{\lambda_\alpha}
      \right)
      - \log\left(  
        \frac{\lambda_\delta}{\lambda_\gamma}
       \right),
\end{align*}
where $\alpha,\beta, \gamma,\delta \in 
\mathbb{R}^e/\mathbb{R}{\bf 1}$, 
$\lambda_\alpha,\lambda_\beta, \lambda_\gamma,\lambda_\delta > 0$
and
$(\alpha,\lambda_\alpha) \neq (\beta,\lambda_\beta)$. 
A set $\mathcal{X}$ contains neighbourhoods of 
$\alpha,\beta,\gamma,\delta $. 
If $q(x)=0 ,\, \forall x\in\mathcal{X}$
then
$(\alpha,\lambda_\alpha) = (\gamma,\lambda_\gamma)$ and 
$(\beta,\lambda_\beta) = (\delta,\lambda_\delta)$.
\end{lemma}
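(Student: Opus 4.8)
\emph{Proof proposal.} The plan is to pass from the pointwise vanishing $q\equiv 0$ on neighbourhoods of each of $\alpha,\beta,\gamma,\delta$ to identities between the \emph{subdifferentials} of the four convex functions $d_{\rm tr}(\cdot,\alpha),\dots,d_{\rm tr}(\cdot,\delta)$ at those points, and then to recover the centers and the scalars from the combinatorics of these subdifferentials. The structural input I would isolate first, using Lemma~\ref{lemma:derivative_of_tropical_distance}, is that $x\mapsto d_{\rm tr}(x,\omega)$ is convex and piecewise linear, its one-sided directional derivative at $x$ being the support function of a polytope $D_\omega(x)\subset\trtor$: one has $D_\omega(\omega)=B$, the dual unit ball of $\|\cdot\|_{\rm tr}$, which equals the root polytope $\mathrm{conv}\{E_i-E_j:i\neq j\}$ and therefore realises an edge in \emph{every} direction $E_p-E_q$, whereas for $x\neq\omega$ the polytope $D_\omega(x)=\mathrm{conv}\{E_i:i\in\arg\max(x-\omega)\}+\mathrm{conv}\{-E_j:j\in\arg\min(x-\omega)\}$ is a sum of two simplices, so its edge directions $E_p-E_q$ only range over pairs $\{p,q\}$ contained in one of the (proper) argmax- or argmin-blocks of $x-\omega$.

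First I would localize: since $q$ vanishes on a neighbourhood of $\alpha$, its one-sided directional derivative there vanishes in every direction $u$, so $\lambda_\alpha h_B(u)+\lambda_\delta h_{D_\delta(\alpha)}(u)=\lambda_\beta h_{D_\beta(\alpha)}(u)+\lambda_\gamma h_{D_\gamma(\alpha)}(u)$ for all $u$; since both sides are support functions of compact convex sets this upgrades to the Minkowski-sum identity $\lambda_\alpha B+\lambda_\delta D_\delta(\alpha)=\lambda_\beta D_\beta(\alpha)+\lambda_\gamma D_\gamma(\alpha)$, and analogously at $\beta,\gamma,\delta$ (with $B$ coming from the concave term, hence landing on the opposite side at $\beta$ and $\gamma$). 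Comparing edge directions — using that the edge directions of a Minkowski sum are the union of those of the summands — the side that contains the copy of $B$ realises every $E_p-E_q$, so the other side must too. I would then invoke the combinatorial lemma that two set-partitions of $[e]$ into two nonempty blocks each cannot jointly contain every $2$-subset of $[e]$: their common refinement always has two ``diagonal'' blocks, and picking one element from each produces a pair separated by both partitions. Applied at each center this forces $\alpha\in\{\beta,\gamma\}$, $\beta\in\{\alpha,\delta\}$, $\gamma\in\{\alpha,\delta\}$, $\delta\in\{\beta,\gamma\}$.

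A short case analysis on these coincidences then closes the argument. If $\alpha\neq\gamma$, the constraints force $\beta=\alpha$ and $\delta=\gamma$, so $q=(\lambda_\alpha-\lambda_\beta)d_{\rm tr}(\cdot,\alpha)+(\lambda_\delta-\lambda_\gamma)d_{\rm tr}(\cdot,\gamma)+\mathrm{const}$; evaluating the kink $D^+q(\alpha;u)+D^+q(\alpha;-u)$ at a $u\neq0$ orthogonal to the (lower-dimensional) span of $D_\gamma(\alpha)$ gives $(\lambda_\alpha-\lambda_\beta)h_B$ of a symmetric full-dimensional body vanishing, hence $\lambda_\alpha=\lambda_\beta$, contradicting $(\alpha,\lambda_\alpha)\neq(\beta,\lambda_\beta)$. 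So $\alpha=\gamma$ and, symmetrically, $\beta=\delta$, whence $q=(\lambda_\alpha-\lambda_\gamma)d_{\rm tr}(\cdot,\alpha)+(\lambda_\delta-\lambda_\beta)d_{\rm tr}(\cdot,\beta)+\mathrm{const}$. If $\alpha\neq\beta$, the same orthogonal-direction kink at $\alpha$ yields $\lambda_\alpha=\lambda_\gamma$, after which $q=(\lambda_\delta-\lambda_\beta)d_{\rm tr}(\cdot,\beta)+\mathrm{const}$ has a genuine cone point at $\beta$ unless $\lambda_\delta=\lambda_\beta$; if instead $\alpha=\beta$ then $\lambda_\alpha\neq\lambda_\beta$ and $q=(\lambda_\alpha-\lambda_\gamma+\lambda_\delta-\lambda_\beta)d_{\rm tr}(\cdot,\alpha)+\mathrm{const}$, so vanishing of the coefficient gives $\lambda_\alpha+\lambda_\delta=\lambda_\beta+\lambda_\gamma$ and vanishing of the constant $\log(\lambda_\beta/\lambda_\alpha)-\log(\lambda_\delta/\lambda_\gamma)$ gives $\lambda_\alpha\lambda_\delta=\lambda_\beta\lambda_\gamma$, forcing $\{\lambda_\alpha,\lambda_\delta\}=\{\lambda_\beta,\lambda_\gamma\}$ as multisets and hence $\lambda_\alpha=\lambda_\gamma$, $\lambda_\beta=\lambda_\delta$. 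In every case $(\alpha,\lambda_\alpha)=(\gamma,\lambda_\gamma)$ and $(\beta,\lambda_\beta)=(\delta,\lambda_\delta)$.

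I expect the main obstacle to be the combinatorial covering lemma together with making the support-function / edge-direction machinery rigorous on the quotient $\trtor$ — explicitly identifying $B$ and the $D_\omega(x)$, and justifying that edge directions of a Minkowski sum are the union of the summands' edge directions; the subsequent bookkeeping over the scalars is routine, but it must be carried out carefully because of the several ways in which the four centers can collide.
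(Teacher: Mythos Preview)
Your approach is correct and substantially different from the paper's. Both proofs share the same endgame --- the case analysis on how many of $\alpha,\beta,\gamma,\delta$ coincide, and the same sum--product trick when all four collapse --- but the mechanisms for forcing those coincidences diverge.

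The paper works with the scalar second-difference $s_i(v)=\phi_i(v)+\phi_i(-v)$ built from Lemma~\ref{lemma:derivative_of_tropical_distance}, sums to $s(v)=\sum_i s_i(v)$, and exploits $s(0)=2e$ versus $0\le s(v)\le e$ for $v\neq 0$. Plugging in $x=\alpha,\beta,\gamma,\delta$ gives a cycle of inequalities that (under the hypothesis $\{\alpha,\delta\}\cap\{\beta,\gamma\}=\emptyset$) first forces all four $\lambda$'s equal, after which a hand computation at a carefully chosen perturbed point $\alpha+\epsilon E_i$ produces the contradiction. You instead package the one-sided directional derivative as a support function and obtain the Minkowski-sum identity $\lambda_\alpha B+\lambda_\delta D_\delta(\alpha)=\lambda_\beta D_\beta(\alpha)+\lambda_\gamma D_\gamma(\alpha)$, then read off the coincidences from the edge-direction combinatorics of the root polytope.

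What each buys: the paper's route is entirely elementary --- nothing beyond the explicit formula for $\phi_i$ --- but the perturbation step is somewhat ad hoc, and it only yields $\{\alpha,\delta\}\cap\{\beta,\gamma\}\neq\emptyset$, so a separate argument is still needed to rule out $|\{\alpha,\beta,\gamma,\delta\}|=3$. Your argument is more structural, delivers the sharper constraints $\alpha\in\{\beta,\gamma\}$, $\beta\in\{\alpha,\delta\}$, $\gamma\in\{\alpha,\delta\}$, $\delta\in\{\beta,\gamma\}$ in one stroke, and the kink-in-an-orthogonal-direction device you use afterwards is cleaner than the paper's inequality chase. One small point to make explicit: your covering lemma is phrased for genuine bipartitions of $[e]$, but $I^+_\beta\cup I^-_\beta$ need not exhaust $[e]$; that case only makes covering harder (any pair touching the leftover set is automatically missed by $\beta$), so the conclusion follows a fortiori, but say so. The fact that the edge directions of a Minkowski sum are exactly the union of the summands' edge directions is standard (via normal fans: the codimension-one skeleton of the common refinement is the union of those of the summands), and you are right to flag it as something to spell out.
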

\begin{proof}

According Lemma \ref{lemma:derivative_of_tropical_distance}, there exists 
$\eta_1 > 0$ such that for all 
$\epsilon \in [0, \eta_1]$
\begin{equation}
    d_{\rm tr}(x+\epsilon E_i,y) = d_{\rm tr}(x,y)+ \epsilon \phi_i(x-y).
    \label{eq:dtr_pos}
\end{equation}
Moreover, $d_{\rm tr}(x - \epsilon E_i, y)  = 
    d_{\rm tr}(y, x - \epsilon E_i) = 
    d_{\rm tr}(y+\epsilon E_i, x)$ and so using 
    Lemma \ref{lemma:derivative_of_tropical_distance}
    again (but with $x$ and $y$ swapped), 
    there exists $\eta_2>0$ such that for all $\epsilon \in [0,\eta_2]$
\begin{equation}
    d_{\rm tr}(x - \epsilon E_i, y)  = 
    d_{\rm tr}(x,y) + \epsilon \phi_i(y-x), 
    \label{eq:dtr_neg}
\end{equation}
for all $\epsilon \in [0,\epsilon_0(y-x)]$. 
For all $\epsilon \in [0,\eta]$ 
where $\eta \coloneqq \min(\eta_1,\eta_2)$, 
equations
\eqref{eq:dtr_pos}, \eqref{eq:dtr_neg} are satisfied and so 
\begin{align*}
    q&(x+\epsilon E_i) = q(x) + \\&\epsilon \left( 
        \lambda_\alpha \phi_i(x-\alpha) - 
        \lambda_\beta \phi_i(x-\beta) - 
        \lambda_\gamma \phi_i(x-\gamma) + 
        \lambda_\delta \phi_i(x-\delta)
    \right),  \\ 
    q&(x-\epsilon E_i) = q(x) + \\&\epsilon \left( 
        \lambda_\alpha \phi_i(\alpha-x) - 
        \lambda_\beta \phi_i(\beta-x) - 
        \lambda_\gamma \phi_i(\gamma-x) + 
        \lambda_\delta \phi_i(\delta-x) 
    \right). 
\end{align*}
Consequently, for all $\epsilon \in [0,\eta]$, 
\begin{align}
    \label{eq:h_second_diff}
    q&(x+\epsilon E_i) + q(x-\epsilon E_i) 
    - q(x)
    = 0 \\&= 
        \epsilon \left(
        \lambda_\alpha s_i(x-\alpha) - 
        \lambda_\beta s_i(x-\beta) - 
        \lambda_\gamma s_i(x-\gamma) + 
        \lambda_\delta s_i(x-\delta)
    \right), \nonumber
\end{align}
where
\begin{align}
    \label{eq:s_i}
    s_i(v) &\coloneqq \phi_i(v) + \phi_i(-v) = \\&\begin{cases}
        2, & \text{ if } v= {\bf 0} \\
        1, & \text{ if } v \neq {\bf 0} \text{ and } 
        v_i \text{ is the non-unique maximizer
        or minimizer of } v \\
        0, & \text{ otherwise }
    \end{cases}
    \nonumber
\end{align}
By summing equation \eqref{eq:h_second_diff} over $i \in [e]$
and defining $s(v) = \sum_{i=1}^e s_i(v)$,
\begin{equation}
    \label{eq:s}
     \lambda_\alpha s(x-\alpha) - 
    \lambda_\beta s(x-\beta) - 
    \lambda_\gamma s(x-\gamma) + 
    \lambda_\delta s(x-\delta) = 0, 
\end{equation}
$\forall x \in \mathcal{X}$.
\par 
Here we try to prove by contradiction that 
$\mathcal{S} := \{\alpha,\delta\}\cap \{\gamma,\beta\}$ is not empty.
Suppose that $\mathcal{S} := \{\alpha,\delta\}\cap \{\gamma,\beta\} = \emptyset$. 
Then, setting $x= \alpha$ in equation \eqref{eq:s} and noting that 
$s(0) = 2e$ and $0 \leq s(v) \leq e$ for $v \neq 0$, we get 
$2e \lambda_\alpha \leq e \lambda_\beta + e \lambda_\gamma$, since
$\beta, \gamma \neq \alpha$.
Applying the same argument to $x=\beta,\gamma,\delta$, the following system of inequalities holds
\begin{align*}
    2\lambda_\alpha &\leq \lambda_\beta + \lambda_\gamma \\
    2\lambda_\beta &\leq \lambda_\alpha + \lambda_\delta \\
    2\lambda_\gamma & \leq \lambda_\alpha + \lambda_\delta \\
    2\lambda_\delta & \leq \lambda_\beta + \lambda_\gamma.
\end{align*}
It follows that $\lambda_\alpha = \lambda_\beta = \lambda_\gamma =\lambda_\delta $.
Then, rewrite equation \eqref{eq:s} as 
\begin{equation}
    \label{eq:s2}
     s(x-\alpha) - 
     s(x-\beta) - 
     s(x-\gamma) + 
     s(x-\delta) = 0, 
\end{equation}
Note now equation \eqref{eq:s2} can only hold at $x=\alpha$ iff 
$s(\alpha - \gamma) = s(\alpha-\beta) = e$ and $s(\alpha-\delta)=0$. 
But $s(v)=e$ if and only if all the components of $v$ are non-unique minimizers and maximizers
or $\{v_i:i\in[e]\}=\{\zeta,\kappa\}$, where $\zeta < \kappa$ and 
$|\{i:v_i = \zeta \}|=n_\zeta , |\{i: v_i = \kappa \}| = n_\kappa$, 
such that $n_\zeta + n_\kappa = e$ and $n_\zeta, n_\kappa \geq 2$.
\par 
Consider $z =v+\epsilon E_i$, where $v_i = \zeta$ and $0<\epsilon < \kappa-\zeta$.
The minimum and maximum components of $z$ are 
$\zeta$ and $\kappa$, and 
$\{z_i:i\in[e]\}=\{\zeta,\zeta + \epsilon, \kappa\}$ with
$|\{i:z_i = \zeta \}|=n_\zeta - 1,|\{i: z_i = \kappa \}| = n_\kappa $. 
It follows that, 
$$s(z) = |\{i:z_i = \zeta \}| +|\{i: z_i = \kappa \}| = e-1.$$
\par 
Now consider $z = v + \epsilon E_i$ where $v_i = \kappa$. 
The maximum is no longer unique, but the $n_{\zeta}$ minima are still unique. 
Therefore, $s(z) = n_{\zeta} \geq 2$. 
Combining the two cases, it is concluded that $s(v + \epsilon E_i) \geq 2$ for all $i \in [e]$.
\par 
Set $x = \alpha + \epsilon E_i$, where $\alpha_i - \beta_i = \min_k \{ \alpha_k - \beta_k \}$. 
Then, 
\begin{equation}
    \label{eq:alpha}
    s(x - \alpha) = s(\epsilon E_i)= e-1, 
\end{equation}
since there is a unique maximizer, but all the other $e-1$ components are $0$, which is the minimum.
Furthermore, 
\begin{equation}
    \label{eq:beta}
    s(x - \beta) = s(\alpha - \beta + \epsilon E_i) = e-1, 
\end{equation}
since 
for $v = \alpha - \beta$ with $s(v) = e$, it corresponds to the first case examined.
It is assumed that $\epsilon < \kappa - \zeta = d_{\rm tr}(\alpha - \beta)$.
Moreover, 
\begin{equation}
    \label{eq:gamma}
    s(x - \gamma) = s(\alpha-\gamma + \epsilon E_i) \geq 2,
\end{equation}
for $v = \alpha - \gamma$ with $s(v) = e$. 
Finally, since $s(\alpha - \delta) = 0$ and so the components of $\alpha - \delta$ 
have a unique minimum and a unique maximum, there exists a neighborhood around 
$x = \alpha$ such that $x-\alpha$ still has that property, i.e. 
\begin{equation}
    \label{eq:delta}
    s(x - \delta) = s(\alpha - \delta + \epsilon E_i) = 0
\end{equation}
for all $\epsilon < \eta$ for some $\eta>0$.
\par 
From equations \eqref{eq:alpha} -- \eqref{eq:delta}, it is concluded that 
\begin{equation}
    \label{eq:s_contradiction}
    s(x-\alpha) - 
    s(x-\beta) - 
    s(x-\gamma) + 
    s(x-\delta) \leq -2 , 
\end{equation}
which contradicts equation \eqref{eq:s2}.
Therefore $\mathcal{S} = \{\alpha,\delta\}\cap \{\gamma,\beta\} \neq \emptyset$.
\par 
Define another set $\mathcal{T} = \{\alpha,\beta,\gamma,\delta\}$. 
Since $\mathcal{S} \neq \emptyset$, $|\mathcal{T}| \leq 3$.
Suppose that $|\mathcal{T}| = 3$ with $\mathcal{T} = \{\tau, \upsilon,\phi\}$.
Then, without loss of generality equation \eqref{eq:s} becomes 
\begin{equation}
    \label{eq:s3}
    \lambda_\tau s(x-\tau) + \lambda_\upsilon s(x- \upsilon) - \lambda_\phi s(x-\phi) = 0
\end{equation}
Similarly to before, setting $x=\tau,\upsilon,\phi$ yields,
\begin{align*}
    2\lambda_\tau &\leq \lambda_\phi \\
    2\lambda_\upsilon &\leq \lambda_\phi \\
    2\lambda_\phi & \leq \lambda_\tau + \lambda_\upsilon, 
\end{align*}
which is contradictory since $\lambda_\tau + \lambda_\upsilon>0$.
Therefore, $|\mathcal{T}|\leq 2$.
There are $4$ cases to consider
\begin{enumerate}[i.]
    \item $\alpha = \delta \neq \beta = \gamma$, but then $\mathcal{S} = \emptyset$,
    \item $\alpha = \beta \neq \gamma = \delta$, but then equation \eqref{eq:s} can only be satisfied 
    $x=\alpha,\gamma$
    if $\lambda_\alpha = \lambda_\beta$ 
    and $\lambda_\gamma  = \lambda_\delta$
    which violates the statement that 
    $(\alpha,\lambda_\alpha) \neq (\beta,\lambda_\beta)$,
    \item $\alpha = \gamma \neq \beta = \delta$ and from equation \eqref{eq:s} at $x=\alpha,\gamma$
    it follows that $\lambda_\alpha = \lambda_\gamma, \lambda_\beta = \lambda_\delta$ 
    and hence the desired result,
    \item $\alpha = \beta=\gamma=\delta$, in which case 
    \[
        q(x) = (\lambda_\alpha - \lambda_\beta - \lambda_\gamma + \lambda_\delta) d_{\rm tr}(x,\alpha)
        +\log\left( 
        \frac{\lambda_\beta}{\lambda_\alpha}
      \right)
      - \log\left(  
        \frac{\lambda_\delta}{\lambda_\gamma}
       \right),
    \]
    which can only be uniformly $0$ at $\mathcal{X}$ if and only if 
    $\lambda_\alpha + \lambda_\delta = \lambda_\beta + \lambda_\gamma$. 
    Observe that $(\lambda_\alpha,\lambda_\delta)$ and $(\lambda_\beta, \lambda_\gamma)$
    are the two roots of the same quadratic
    $z^2 - (\lambda_\alpha + \lambda_\delta) z + \lambda_\alpha \lambda_\delta$ and noting that 
    in this case
    $\lambda_\alpha \neq \lambda_\beta$, it 
    follows that
    $\lambda_\alpha = \lambda_\gamma$ and $\lambda_\beta = \lambda_\delta$. 
\end{enumerate}

\end{proof} 

\begin{lemma}
    \label{lemma:compact_set}
    Consider a compact set $\Sigma \subseteq \mathbb{R}_+ =  (0,\infty)$.
    Then the set $\Lambda = \{ \sigma^{-1}: \sigma \in \Sigma \} 
    \in \mathbb{R}_+$ is also compact.  
\end{lemma}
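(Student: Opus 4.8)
The plan is to exhibit $\Lambda$ as the image of $\Sigma$ under a continuous map and invoke the standard fact that continuous images of compact sets are compact. Concretely, consider the reciprocal map $g : \mathbb{R}_+ \to \mathbb{R}_+$ defined by $g(\sigma) = \sigma^{-1}$. First I would note that $g$ is continuous on $(0,\infty)$, since it is a rational function whose denominator does not vanish on the domain. Then, because $\Sigma \subseteq \mathbb{R}_+$ is compact and $g$ is continuous, the image $g(\Sigma) = \Lambda$ is compact, which is exactly the claim.

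If one prefers a self-contained argument not appealing to the continuous-image theorem, I would instead use the Heine--Borel characterization in $\mathbb{R}$. Since $\Sigma$ is compact it is closed and bounded; moreover $0 \notin \Sigma$ and $\Sigma$ is closed in $\mathbb{R}$, so $a := \inf \Sigma$ is attained and satisfies $a > 0$. Hence $\Sigma \subseteq [a, b]$ for some $0 < a \le b < \infty$, which gives $\Lambda \subseteq [b^{-1}, a^{-1}]$, so $\Lambda$ is bounded. For closedness, take any sequence $(\lambda_k) \subseteq \Lambda$ with $\lambda_k \to \lambda$; writing $\lambda_k = \sigma_k^{-1}$ with $\sigma_k \in \Sigma$, by compactness of $\Sigma$ a subsequence $\sigma_{k_j} \to \sigma \in \Sigma$, and since all terms lie in $[a,b]$ the limit $\sigma$ lies in $[a,b]$, so $\lambda = \sigma^{-1} \in \Lambda$. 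Thus $\Lambda$ is closed and bounded, hence compact.

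There is no real obstacle here; the statement is an elementary topological fact. The only point that warrants a moment's care is verifying that $\Sigma$ stays bounded away from $0$ (so that $g$ is not merely continuous but has bounded image), which follows immediately from $\Sigma$ being a closed subset of $(0,\infty)$ not containing $0$. I would present the first, one-line argument as the main proof and perhaps relegate the explicit Heine--Borel version to a parenthetical remark.
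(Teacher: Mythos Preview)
Your proposal is correct. The paper takes a slightly different route: it invokes the equivalence of compactness and sequential compactness in metric spaces, takes an arbitrary sequence in $\Lambda$, writes it as $(1/\sigma_n)$ with $\sigma_n\in\Sigma$, extracts a convergent subsequence $\sigma_{n_k}\to\sigma\in\Sigma$ by compactness of $\Sigma$, and concludes $1/\sigma_{n_k}\to 1/\sigma\in\Lambda$. In effect the paper is re-proving, for this particular map, the continuous-image-of-compact theorem that you simply cite. Your one-line argument via $g(\sigma)=\sigma^{-1}$ being continuous on $(0,\infty)$ is cleaner and more direct; the paper's version has the minor advantage of being self-contained, though your Heine--Borel alternative serves the same purpose and additionally makes explicit the point that $\Sigma$ is bounded away from $0$, which the paper's proof uses implicitly (continuity of $\sigma\mapsto 1/\sigma$ at the limit point $\sigma>0$).
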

\begin{proof}
    In metric spaces, a set is compact iff it is sequentially compact.
    Therefore, for every sequence $\sigma_n \in \Sigma$, $\sigma_n \to \sigma \in \Sigma$.
    Every sequence in $\Lambda$ can be expressed as $1/\sigma_n$, which tends to 
    $1/\sigma \in \Lambda$. 
    Therefore, $\Lambda$ is sequentially compact and hence compact.
\end{proof}

\begin{proof}[{\bf Proof of Theorem \ref{prop:consistency_of_estimators}}]
This proof has been written for precision estimators $\lambda=1/\sigma$ instead of 
deviation estimators. 
For the rest of the proof consider $\lambda_y = \sigma_y^{-1}$ for $y=0,1$ 
and define the set 
\[\Lambda = \{ \sigma^{-1}: \sigma \in \Sigma \} 
    \in \mathbb{R}_+.\]
According to Lemma \ref{lemma:compact_set}, $\Lambda$ is also compact.\\ 
Define the function $f$ and $h$ as
\begin{align*}
    f: \trtor &\times \{0,1\} \times  \Omega^2 \times \Lambda^2 \to \R, \\
    f((x,y),(\omega,\lambda) ) & = 
    y \log S(h(x,(\omega,\lambda))) 
    + 
    (1-y) \log S(-h(x,(\omega,\lambda))), \\ 
    h: \trtor &\times \Omega^2 \times \Lambda^2  \to \R,\\
    h(x,(\omega,\lambda )) & = 
    \lambda_0 d_{\rm tr}(x,\omega_0) - \lambda_1 d_{\rm tr}(x,\omega_1)
    +(e-1) \log{\frac{\lambda_{1} }{\lambda_0}},
\end{align*} 
where $S$ is the logistic function. 
Also denote the empirical ($Q_n$) and expected ($Q$) log-likelihood functions
as 
\begin{align*}
    Q_n(\omega,\lambda) &= \frac{1}{n} \sum_{i=1}^{n} f((X_i,Y_i),(\omega,\lambda)) ~\text{ with }\\
    Q_n(\Hat{\omega}_n,\Hat{\lambda}_n) &= 
    \sup_{\omega \in \Omega^2,
    \lambda \in \Lambda^2  } Q_n(\omega),~\text{ and }\\
    Q(\omega,\lambda) &= \mathbb{E}_{(X,Y)} \left( f((X,Y),(\omega,\lambda)) \right)  \\&=
    \mathbb{E}_X(
        S(h(X,(\omega^*,\lambda^*) )) \log( S(h(X,(\omega,\lambda) ) )  \\&~~~+
        S(-h(X,(\omega^*,\lambda^*) )) \log( S(-h(X,(\omega,\lambda) ) ) 
    ).
\end{align*}
The last equation follows from conditioning on 
\[
Y \sim \text{Bernoulli}(S(h(X,(\omega^*,\lambda^*) ))).
\]
Before we move on, we need to prove that 
$f((X,Y),(\omega,\lambda))$ is integrable so that 
$Q$ is well-defined. Without loss of generality assume 
that $\lambda_1 \geq \lambda_0$. 
It suffices to prove that $\mathbb{E}(f((X,Y),(\omega,\lambda)),Y=y)$ 
is integrable for both $y=0,1$.
Observe that 
\begin{align*}
h(X,(\omega,\lambda)) &\leq 
(\lambda_0 -\lambda_1) d_{\rm tr}(X,\omega_0) +
\lambda_1 d_{\rm tr}(\omega_0,\omega_1) + 
\text{const} \\
& \leq \lambda_1 d_{\rm tr}(\omega_0,\omega_1) + 
\text{const}.
\end{align*}
Since $h(X,(\omega,\lambda))$ is bounded above, 
$f((X,Y),(\omega,\lambda))$ is also bounded below on $Y=0$ 
and is hence integral on $Y=0$. 
Also, observe that
\begin{equation*}
    h(X,(\omega,\lambda)) \geq 
(\lambda_0 -\lambda_1) d_{\rm tr}(X,\omega_1) -
\lambda_0 d_{\rm tr}(\omega_0,\omega_1) + 
\text{const} 
\end{equation*}
and noting that 
$\log(S(x)) > x-1 $ for all $x<0$
\[
    \log(S(h(X,(\omega,\lambda)))) \geq 
    h(X,(\omega,\lambda)) - 1 \geq 
    (\lambda_0 -\lambda_1) d_{\rm tr}(X,\omega_1) 
    +\text{const}.
\]
Since $d_{\rm tr}(X,\omega_1)$ is integrable on $Y=1$, 
the LHS is integrable on $Y=1$ too. 
It follows that $f(X,(\omega,\lambda))$ is 
integrable and hence $Q$ is well-defined.
\par
First, we prove that 
$Q$ is maximised at $(\omega,\lambda) = (\omega^*,\lambda^*)$ and that this 
maximizer is unique. Consider the function 
\begin{equation*}
    g: \mathbb{R} \to \mathbb{R}, \,
    g(t) = S(\alpha) \log S(t) + S(-\alpha) \log S(-t),
\end{equation*}
where $\alpha \in \mathbb{R}$ is some constant.
The function $g$
is maximised at $t=\alpha$ and applying Taylor's theorem yields 
\begin{equation*}
    g(x) = g(\alpha) - \frac{1}{2} S(\xi) S(-\xi) (x-\alpha)^2, \text{ for some }
    \xi \in (\alpha,x). 
\end{equation*}
Setting $\alpha = h(X,(\omega^*,\lambda^*) )$ and denoting $\xi$ as a random variable
$$\xi(X) \in (h(X,(\omega^*,\lambda^*)),h(X,(\omega,\lambda)))$$
observe that 
\begin{align}
    Q(\omega,\lambda) &= \mathbb{E}_X( g(h(X,(\omega,\lambda) ) ) ) \nonumber \\& 
    = \mathbb{E}_X ( g(h(X,(\omega^*,\lambda^*) ) ) - \label{eq:taylor_approx} \\&
    \frac{1}{2} 
    \mathbb{E}_X  \left(S(\xi(X)) S(-\xi(X)) [h(X,(\omega,\lambda) )-
    h(X,(\omega^*,\lambda^*) )]^2 \right)  \nonumber
    \\& \leq  Q(\omega^*,\lambda^*),   \nonumber
\end{align}
Hence, from the expression above it is deduced that $(\omega^*,\lambda^*)$
is a maximizer. 
Now, consider the function $q: \mathcal{X} \to \R $
\begin{equation*}
    q(x) = h(x,(\omega^*,\lambda^*) ) - h(x,(\omega,\lambda)),
\end{equation*}
where $\Omega \subset \mathcal{X} \subset \trtor$ such that 
for some $\zeta >0$
$$\mathcal{X} = \{x \in \trtor: \inf_{\omega \in \Omega} d_{\rm tr}(x,\omega)<\zeta \}, $$
so that for any $\omega \in \Omega$ there is a neighborhood of $\omega$ within $\mathcal{X}$.
Note that $\mathcal{X}$ is a bounded set since $\Omega$ is bounded too.
\par 
We will prove by contradiction that 
$q(x) = 0, \forall x \in \mathcal{X}$.
Suppose there exists $x_0 \in \mathcal{X}$ such that $q(x_0) > 0$, then since $q$ is continuous there exists
a neighborhood $U$ with $x_0 \in U$ such that $q(x) > 0$ for all $x \in U$ and so 
\begin{equation*}
    \mathbb{E}(q^2(X) \mathbb{I}(X \in U) ) >0, 
\end{equation*}
where $\mathbb{I}$ is the indicator function.
Since $h(x,(\omega,\lambda))$ is continuous with respect to $x$ and $\mathcal{X}$ is bounded, 
the function takes values on a bounded interval and hence $\xi(x)$ is bounded in $\mathcal{X}$
i.e. there exists $\epsilon>0$ such that
$\mathbb{P}(S(\xi(X) S(-\xi(X)) > \epsilon| X \in U) = 1$ and so equation \eqref{eq:taylor_approx} becomes
\begin{equation*}
    Q(\omega,\lambda) \leq Q(\omega^*,\lambda^*) - 
    \frac{\epsilon}{2} \mathbb{E}(q^2(X) \mathbb{I}(X \in U) )
    < Q(\omega^*,\lambda^*),
\end{equation*}
since $\mathbb{P}(X \in U) >0$ ($X$ has positive density everywhere). 
Therefore, for $(\omega,\lambda)$ to be a maximizer, 
$q(x) = 0$ for all $x \in \mathcal{X}$.
Apply Lemma \ref{lemma:uniqueness_of_optimum} 
with $\omega^* = (\alpha,\beta)$, $\omega = (\gamma,\delta)$, 
$\lambda^* = (\lambda_\alpha,\lambda_\beta)$ 
and 
$\lambda = (\lambda_\gamma,\lambda_\delta)$
with the set $\mathcal{X}$ containing neighbourhoods of $\alpha,\beta,\gamma,\delta$
and $q(x) = 0$ for all $x$ in those neighbourhoods.
It is concluded that
$\omega = \omega^*$ and $\lambda = \lambda^*$,
thus proving the uniqueness of the maximizer.
\par
Theorem \ref{thm:uniform_law_of_large_numbers}
provides the uniform law of large numbers. 
The parameter space $\Omega^2 \times \Lambda^2$ is compact since $\Omega$ 
and $\Lambda$ are compact.
Moreover, 
$f((x,y),(\omega,\lambda) )$ is clearly continuous 
at each $(\omega,\lambda) \in \Omega^2 \in \Lambda^2$. 
Finally, consider the function 
\[
    r(z) =  \sup_{\omega \in \Omega^2,\lambda \in \Lambda^2} \{ |f(z,(\omega,\lambda) )| \} = 
    - f(z, \omega(z), \lambda(z)), 
\]
since $f$ is non-positive. 
The function $\omega(z), \lambda(z)$ are chosen to
minimize $f$. Using equation \eqref{eq:taylor_approx}, 
\begin{align*}
    \mathbb{E}(r(X)) &\leq  
    - Q(\omega^*,\lambda^*) + \\&
    \frac{1}{2} 
    \mathbb{E}( \left[
        h(X,(\omega(X),\lambda(X)))- 
        h(X,(\omega^*,\lambda^*))
    \right]^2) ,
\end{align*}
since the sigmoid function is bounded by $1$. 
Note that 
\begin{equation*}
    \mathbb{E}((Z+W)^2) \leq 
     2(\mathbb{E}(Z^2) +\mathbb{E}(W^2)),
\end{equation*}
and set $W = \log(\lambda_1(X)/\lambda_0(X)) - 
\log(\lambda_1^*/\lambda_0^*)$. 
Since $\lambda_y(X) \in \Lambda \subseteq [a,b]$ for some 
$b\geq a>0$, it follows that $W^2$ is integrable and so now we just have to prove that 
$Z$ is integrable, where $Z=Z_1+Z_2+Z_3+Z_4$ with the four terms corresponding to tropical
distance function $\lambda d_{\rm tr}(X,\omega)$.
It also holds 
\begin{equation*}
    \mathbb{E}((Z_1+Z_2+Z_3+Z_4)^2) \leq 
     2(\mathbb{E}(Z_1^2) +\mathbb{E}(Z_2^2) + \mathbb{E}(Z_3^2) +\mathbb{E}(Z_4^2))
\end{equation*}
and so $\mathbb{E}(Z^2)$ is bounded above by
\begin{align*}
    \mathbb{E}\left(
    \sum_{i=0}^1 \lambda_i^2 
    d_{\rm tr}^2(X,\omega_i(X))) +
    (\lambda_i^*)^2 
    d_{\rm tr}^2(X,\omega^*_i(X))
    \right) \leq \\ 
    \mathbb{E}_Y \left[
    2 \left(
        \sum_{i=0}^1 \lambda_i^2 + (\lambda_i^*)^2
    \right)
    \mathbb{E}\left(d_{\rm tr}^2(X,\omega^*_Y)|Y \right) 
    + \right. \\ \left. 
    2 \left(
        \sum_{i=0}^1 \lambda_i^2 
        d_{\rm tr}^2( \omega_i(X),\omega_Y^*)
        + (\lambda_i^*)^2 
        d_{\rm tr}^2(\omega^*_i,\omega_Y^*)
    \right)
    \right], 
\end{align*} 
where the second inequality came from applying the 
triangular inequality four times in the form 
$d_{\rm tr}(X,\tau) \leq d_{\rm tr}(X,\omega^*_Y) +
d_{\rm tr}(\omega^*_Y,\tau)$. 
The final expression is finite because $\Omega$ is compact 
and hence $d_{\rm tr}( \omega_i(X),\omega_Y^*)$ is finite, 
$d_{\rm tr}(X,\omega_Y)|Y$ is square-integrable.
Therefore, $\mathbb{E}(r(X))$ is finite.
\par 
All conditions of the theorem are satisfied and so 
\begin{align*}
    \sup_{\omega \in \Omega^2} \left|
        \frac{1}{n} 
        \sum_{i=1}^n f( (X_i,Y_i), \omega) - 
        \mathbb{E}(f( (X,Y), \omega))
    \right| = \\ 
    \sup_{\omega \in \Omega^2} \left|
        Q_n(\omega) - Q(\omega)
    \right| \overset{p}{\to} 0.
\end{align*}
Finally, using Theorem \ref{thm:convergence_of_theta_n}
and combining the uniqueness of the maximizer with the uniform bound result, 
it is concluded that $\Hat{\omega} \overset{p}{\to} \omega^*$.
\end{proof}

\begin{proof}[{\bf Proof of Proposition 
\ref{prop:gen_error_one_species}}]
First, define $\Delta_0 = \{ C(X) \neq 1  | Y = 0 \} $. 
By definition of $C(X)$, 
\begin{align*}
    \Delta_0 &= \left\{ \left. (\sigma_0^{-1} - \sigma_1^{-1}) d_{\rm tr}(X,\Hat{\omega})
    - (e-1) \log{\left( \frac{\sigma_1}{\sigma_0}  \right)} \geq 0
    \right| Y=0 
    \right\}\\
    & = 
    \left\{ \left.
    d_{\rm tr}(X,\Hat{\omega}) \geq \alpha \sigma_0 \sigma_1 
    \right| Y=0 
    \right\}.
\end{align*}
Triangular inequality dictates that 
\begin{equation*}
    d_{\rm tr}(X,\omega^*) - d_{\rm tr}(\omega^*,\Hat{\omega}) 
    \leq 
    d_{\rm tr}(X,\Hat{\omega}) 
    \leq 
    d_{\rm tr}(X,\omega^*) + d_{\rm tr}(\omega^*,\Hat{\omega}), 
\end{equation*}
and so it follows that 
\begin{align*}
    \Delta_0 & \supseteq 
    \{ d_{\rm tr}(X,\omega^*) \geq \sigma_0 \sigma_1 \left( \alpha + \epsilon  \right) | Y = 0  \} \\ 
    \Delta_0 & \subseteq
    \{ d_{\rm tr}(X,\omega^*) \geq \sigma_0 \sigma_1 \left( \alpha - \epsilon  \right) | Y = 0  \}, 
\end{align*}
and since $Z = \sigma_0^{-1} d_{\rm tr}(X,\omega^*)|Y=0\sim F$, 
\begin{equation*}
    \mathbb{P}( Z \geq \sigma_1 (\alpha + \epsilon)) \leq
    \mathbb{P}(\Delta_0) \leq
    \mathbb{P}( Z \geq \sigma_1 (\alpha - \epsilon)),
\end{equation*}
which yields the desired result.\\
Similarly, for $\Delta_1 = \{ C(X) \neq 0 | Y = 1 \} = 
\left\{ d_{\rm tr}(X,\Hat{\omega}) \leq \sigma_0 \sigma_1 \alpha \right\}$,
\begin{align*}
    \Delta_1 & \supseteq 
    \{ d_{\rm tr}(X,\omega^*) \leq \sigma_0 \sigma_1 \left( \alpha - \epsilon  \right) | Y = 1  \} \\ 
    \Delta_1 & \subseteq
    \{ d_{\rm tr}(X,\omega^*) \leq \sigma_0 \sigma_1 \left( \alpha + \epsilon  \right) | Y = 1  \}, 
\end{align*}
and since $Z = \sigma_1^{-1} d_{\rm tr}(X,\omega^*)|Y=1\sim F$, 
\begin{equation*}
    \mathbb{P}( Z \leq \sigma_0 (\alpha - \epsilon)) \leq
    \mathbb{P}(\Delta_1) \leq
    \mathbb{P}( Z \leq \sigma_0 (\alpha + \epsilon)),
\end{equation*}
which is the desired interval. \\
For the second part of the proposition, $\Hat{\omega} = \omega^*$ and so 
$\epsilon = 0$. Hence, 
\begin{align*}
    \mathbb{P}(\Delta_0) &= 1 - F(\sigma_1 \alpha ) = 1- F( x u(x) ) \\ 
    \mathbb{P}(\Delta_1) &= F(\sigma_0 \alpha ) = F( u(x) ), \text{ where } \\ 
    x = \frac{\sigma_1}{\sigma_0} & \text{ and }
    u(x) = (e-1) \frac{ \log{x} }{x - 1}   
\end{align*}
Consider the function 
\[g(x) = 1-F(xu(x)) - F(u(x))\]
Proving that $g(x) <0 $ for all $x > 1$ is equivalent to proving 
the desired result that 
$\mathbb{P}(\Delta_0) < \mathbb{P}(\Delta_1)$ for $\sigma_1 > \sigma_0$. 
First, 
\begin{equation*}
    \lim_{x \to 1} u(x) = \lim_{x \to 1} x u(x) = e-1,
\end{equation*}
and so $\lim_{x \to 1} g(x) = 1-2F(e-1)$. 
It is a well-known fact that the median of the ${\rm Gamma}$ distribution is less than
the mean. 
Hence, for $Z \sim {\rm Gamma}(e-1,1)$ with mean $e-1$,
$F(e-1) > \frac{1}{2}$ and so 
\begin{equation}
    \label{eq:limx_g}
    \lim_{x \to 1} g(x) < 0.
\end{equation}
Finally, the derivative of $g$ is 
\begin{equation*}
    g'(x) = 
    - F'(u(x)) u'(x) - F'(xu(x)) (xu'(x) + u(x)) 
\end{equation*}
The following two inequalities 
\begin{align}
    F'(u(x))  \geq F'(xu(x)), & \label{eq:F'} \\
    u'(x) + xu'(x) + u(x) & \geq 0, \label{eq:u'}
\end{align}
imply that 
\begin{equation}
    \label{eq:g'}
    g'(x) \leq 
    - F'(xu(x)) (u'(x) + xu'(x) + u(x))
    \leq 0.
\end{equation}
From \eqref{eq:limx_g} and \eqref{eq:g'} 
it follows that $g(x) < 0$ for all $x>1$.\\
For inequality \eqref{eq:F'}, remember that 
\begin{equation*}
    F'(x) = \frac{x^{e-2} \exp{(-x)} }{\Gamma(e-1)}
\end{equation*}
and so 
\begin{align*}
    F'(u(x)) - F'(xu(x)) &=  
    F'(u(x))
    \left(
        1 - x^{e-2}  \exp(-(x-1) u(x))
    \right) \\ &=
    F'(u(x))
    \left(
        1 - x^{e-2}  \exp(-(e-1) \log(x))
    \right) \\ &= 
    F'(u(x))(1-x^{-1})>0,
\end{align*}
for all $x>1$.\\ 
For inequality \eqref{eq:u'},
\begin{equation*}
     u'(x) + xu'(x) + u(x) = \frac{e-1}{(x-1)^2} \left( x-x^{-1} -2\log{x} \right),
\end{equation*}
is a non-negative function for $x>1$
iff $v$ is a non-negative function, where
\begin{align*}
    v(x) & = x - x^{-1} - 2\log{x}, \text{ with } \\
    v'(x) & = \frac{(x-1)^2}{x^2} \geq 0
    \text{ and } 
    v(1) = 0. 
\end{align*}
Clearly, $v$ is a non-negative function
for $x>1$, so 
inequality \eqref{eq:u'} is satisfied.
\end{proof}

\begin{proof}[{\bf Proof of Proposition 
\ref{prop:gen_error}}]
    For symbolic convenience, in this proof class $0$ is referred to as class $-1$ 
    and so $Y \in \{-1,1\}$.
    Applying the triangular inequality twice, 
    \begin{align*}
        D_X &
        = d_{\rm  tr}(X , \omega^*_Y)  - d_{\rm  tr}(X , \omega^*_{-Y}) 
        \\ & \geq 
        \left( 
            d_{\rm  tr}(X , \Hat{\omega}_Y) - d_{\rm  tr}(  \omega^*_Y , \Hat{\omega}_Y )  
        \right) \\& - \left(
            d_{\rm  tr}(X , \Hat{\omega}_{-Y}) + d_{\rm  tr}(  \omega^*_{-Y} , \Hat{\omega}_{-Y} )  
        \right) \\
        &= 
        d_{\rm  tr}(X , \Hat{\omega}_Y) - d_{\rm  tr}(X , \Hat{\omega}_{-Y}) - \epsilon, 
    \end{align*}
    it follows that
    \[
    \{ C(X) \neq Y \} =
    \{d_{\rm  tr}(X , \Hat{\omega}_Y)  - d_{\rm  tr}(X , \Hat{\omega}_{-Y})
        \geq 0\}  \subseteq \{D_X \geq - \epsilon\}
    \] 
    and so 
    the generalization error has the following upper bound
    \begin{equation}
        \label{ineq:gen_error}
        \mathbb{P}( C(X) \neq Y ) 
        \leq 
        \mathbb{P} \left( D_X \geq -\epsilon \right).
    \end{equation}
    Note that if 
    $d_{\rm tr}(X,\omega_Y^*) < \Delta_\epsilon$, 
    then by the use of triangular inequality 
    \begin{align*}
         D_X & =d_{\rm  tr}(X , \omega^*_Y)  - d_{\rm  tr}
         (\omega^*_{-Y},X) 
         \\ & \leq 
         d_{\rm  tr}(X , \omega^*_Y) - \left(
            d_{\rm  tr}(\omega_{-Y}^*, \omega_Y^* ) -d_{\rm  tr}
            (\omega^*_Y,X) 
         \right)
         \\ &
         < 2\Delta_{\epsilon}-d_{\rm  tr}(\omega_1^* , \omega_{-1}^*) = -\epsilon. 
    \end{align*}
    Consequently, 
    \begin{equation}
        \label{eq:gen_error_leq_DXZX}
        \mathbb{P}( C(X) \neq Y )  \leq 
        \mathbb{P}\left( D_X \geq -\epsilon , 
        Z_X \geq \Delta_\epsilon
        \right) 
    \end{equation}
    Since the distribution of $X$ is symmetric around 
    $\omega^*_Y$, the random variable 
    $2\omega^*_Y - X$ has the same distribution and so 
    \begin{equation}
        \label{eq:gen_error_2omega-X}
        \mathbb{P}\left( D_X \geq -\epsilon , 
        Z_X \geq \Delta_\epsilon
        \right) = \mathbb{P}\left( D_{2\omega_Y^*-X} \geq -\epsilon , 
        Z_{2\omega_Y^*-X} \geq \Delta_\epsilon
        \right). 
    \end{equation}
    It will be proved that 
    \begin{align}
        Z_{2\omega_Y^*-X} &= Z_{X}, \label{eq:Z} \\ 
        D_X + D_{2\omega_Y^*-X} &\leq 0, \label{eq:D}
    \end{align}
    and so 
    $\{D_{2\omega_Y^*-X} \geq -\epsilon , 
        Z_{2\omega_Y^*-X} \geq \Delta_\epsilon\} \subseteq 
    \{D_{X} \leq \epsilon , 
    Z_{X} \geq \Delta_\epsilon\}.
    $
    Then, using equation \eqref{eq:gen_error_2omega-X}, 
    \begin{equation*}
    \mathbb{P}\left( D_X \geq -\epsilon , 
        Z_X \geq \Delta_\epsilon
        \right)
    \leq  
    \mathbb{P}\left(D_{X} \leq \epsilon , 
    Z_{X} \geq \Delta_\epsilon\right),
    \end{equation*}
    and substituting it to inequality 
    \eqref{eq:gen_error_leq_DXZX},
    \begin{align*}
        \mathbb{P}(C(X) \neq Y)) = & 
        \frac{1}{2} ( \mathbb{P}\left( D_X \geq -\epsilon , 
        Z_X \geq \Delta_\epsilon
        \right)  \\ + & \,\,\,\,
        \mathbb{P}\left( D_X \leq \epsilon , 
        Z_X \geq \Delta_\epsilon
        \right) 
	) \\ = &  
        \mathbb{P}\left( 
            Z_X \geq \Delta_\epsilon
        \right) + h(\epsilon) 
    \end{align*}
    where $h(\epsilon) = 
    \mathbb{P}(Z_X \geq \Delta_\epsilon, |D_X| \leq \epsilon)$
    is an increasing function with respect to $\epsilon$, 
    which completes the first part of the proof.\\
    Equation \eqref{eq:Z} follows from the observation that
    \[d_{\rm tr}(2\omega_Y^*-x,\omega_Y^*) = 
    d_{\rm tr}(x,\omega_Y^*).\]
    For equation \eqref{eq:D},
    \begin{align*}
        D_{2\omega_Y^*-X} + D_X &=
        Z_{2\omega_Y^*-X} 
        - d_{\rm  tr}( 2 \omega_Y^* - X , \omega_{-Y}^* )  \\&
        + Z_X
        - d_{\rm  tr}(X,\omega_{-Y}^*) \\ &
        \overset{\eqref{eq:Z}}{=}
        2Z_{2\omega_Y^*-X}  -  
            d_{\rm  tr}( 2 \omega_Y^* - X,  \omega_{-Y}^* ) -
            d_{\rm  tr}(\omega_{-Y}^*,X)
         \\ 
        & \leq 
        2Z_{2\omega_Y^*-X}  -  
            d_{\rm  tr}( 2 \omega_Y^* - X,  X ) 
        = 0,
    \end{align*}
    where the last inequality comes from the triangular inequality.
    Finally, the consistency of the learning algorithm is proved.
    Under the conditions of Theoreom \ref{prop:consistency_of_estimators}, the maximum likelihood estimator
    $\Hat{\omega} = (\Hat{\omega}_0, \Hat{\omega}_1) 
    \overset{\rm p}{\to}(\omega_0^*,\omega_1^*)$ as $n \to \infty$ 
    where $(X_1,Y_1), \dots, (X_n,Y_n)$ is the sample. 
    For the rest of the proof, the test covariate-response pair
    $(X,Y)$ is independent from the afore training sample. 
    Define the classifier, 
    \[
        C_{\omega}(x) = {\rm sgn}\left( 
            d_{\rm tr}(x,\omega_0) - d_{\rm tr}(x,\omega_1)
        \right)
    \]
    where $\omega = (\omega_0,\omega_1)$.
    The Bayes predictor is $C_{\omega_0^*,\omega_1^*}$.
    Noting that $C_{\omega_0^*,\omega_1^*}(X) = {\rm sgn}(D_X)Y$,
    the Bayes (or irreducible) error is 
    \begin{equation*}
        {\rm BE} = \mathbb{P}(  {\rm sgn}(D_X)Y \neq Y ) =
        \mathbb{P}( D_X > 0) =  
        \mathbb{P}( D_X \geq 0), 
    \end{equation*}
    since it is assumed that $\mathbb{P}(D_X =0)=0$.
    Using inequality \ref{ineq:gen_error} derived earlier, 
    it follows that the generalization error is
    bounded by
    \begin{equation*}
        \mathbb{P}( D_X \geq 0) = 
        {\rm BE} 
        \leq \mathbb{P}(C_{\Hat{\omega}}(X) \neq Y)
        \leq \mathbb{P}(D_X \geq -\epsilon(\Hat{\omega})),
    \end{equation*}
    where $\epsilon(\Hat{\omega}_0,\Hat{\omega}_1)=
    d_{\rm tr}(\omega_0,\omega_0^*) + 
    d_{\rm tr}(\omega_1,\omega_1^*) \overset{\rm p}{\to} 0$.
    as the training sample size $n \to \infty$ according to 
    Theorem \ref{prop:consistency_of_estimators}.
    The complementary CDF of $D_X$, defined as 
    $$F_{D_X}^C(x) = \mathbb{P}(D_X \geq x),$$ 
    is a continuous function and so it follows that 
    $F_{D_X}^C(\epsilon(\Hat{\omega})) \overset{\rm p}{\to} 
    F_{D_X}^C(0) = {\rm BE}$ as $n \to \infty$.
    From the probability squeeze theorem, 
    \begin{equation*}
        \mathbb{P}(C_{\Hat{\omega}}(X)\neq Y|
        (X_1,Y_1), \dots, (X_n,Y_n) ) 
        \overset{\rm p}{\to} 
        {\rm BE} 
        \text{ as }
        n \to \infty.
    \end{equation*}
    This concludes the proof of the consistency of the algorithm.
\end{proof}

\begin{proof}[{\bf Proof of Proposition 
\ref{prop:consistency_of_Fermat_Weber_points}}]
Consider the random variable 
$d_{\rm tr}(X,\alpha)$. From the triangular inequality 
\begin{equation*}
    d_{\rm tr}(X,\alpha) \leq 
    d_{\rm tr}(X,\omega^*) + 
    d_{\rm tr}(\alpha,\omega^*),
\end{equation*}
it is deduced that $d_{\rm tr}(X,\alpha)$ is integrable, bounded
above by an integrable random variable. 
\par 
Now consider the function 
$F: \trtor \to \R$,
\[
    F(x) = d_{\rm tr}(x,\omega) + 
    d_{\rm tr}(2\omega^* - x,\omega) - 
    2d_{\rm tr}(x,\omega^*).
\]
Noting that 
$d_{\rm tr}(2\omega^* - x,\omega) 
= d_{\rm tr}(x,2\omega^* - \omega)$,
it follows that $F(X)$ is integrable as the sum of integrable 
random variables.
\par 
From triangular inequality 
and the fact that $d_{\rm tr}(2\omega^* - x,x) = 2d_{\rm tr}(x,\omega^*)$
it follows that 
$F(x) \geq 0$ for all $x \in \trtor$.
Furthermore, $F(\omega^*)>0$ and since $F$ is continuous, there 
exists a neighbourhood $U$ that contains $\omega^*$ such that 
$F(x) > 0$ for all $x \in U$. Moreover, 
the function has positive density in a neighbourhood $V$ 
that contains the centre $\omega^*$. Therefore, there exists 
a neighbourhood $W = U \cap V$ such that $F(x) > 0$ for all 
$x \in W$ and $\mathbb{P}(X \in W) > 0$. Hence, since 
$F(X) \geq 0$, 
\begin{equation*}
    \mathbb{E}(F(X)) 
    \geq 
    \mathbb{E}(F(X)| X \in W) \mathbb{P}(X \in W ) > 0. 
\end{equation*}
In other words, 
\begin{equation}
    \label{ineq:EF>0}
    \mathbb{E}\left(d_{\rm tr}(X,\omega)\right) + 
    \mathbb{E}\left(d_{\rm tr}(2\omega^*-X,\omega)\right) >  
    2 \mathbb{E}(d_{\rm tr}(x,\omega^*))
\end{equation}
Moreover, consider the isometry $y = 2\omega^*-x$ 
and note that for symmetric probability 
density functions around $\omega^*$, 
$f(\omega^*-\delta) = f(\omega^* + \delta)$ and so for 
$\delta = \omega^* - x$, we have $f(y)=f(x)$. 
Applying this transformation to the following integral yields
\begin{align}
    \mathbb{E}(d_{\rm tr}(2\omega^* - X,\omega)) &= 
    \int_{\trtor} d_{\rm tr}(2\omega^* - x,\omega) f(x) \, dx  
    \label{eq:2omega-x} \\ & =
    \int_{\trtor} d_{\rm tr}(y,\omega) f(y) \, dy 
    = \mathbb{E}(d_{\rm tr}(X,\omega)). \nonumber
\end{align}
Combining equation \eqref{eq:2omega-x} with inequality 
\eqref{ineq:EF>0} shows that the function 
$Q(\omega) = \mathbb{E}(d_{\rm tr}(X,\omega))$ 
has a global minimum at $\omega^*$.
\par 
From Theorem \ref{thm:uniform_law_of_large_numbers} (uniform
law of large numbers), set 
$f(x,\omega) = d_{\rm tr}(x,\omega)$ and observe that 
$f(x,\omega)$ is always continuous w.r.t. $\omega$. 
Setting 
$r(x) = \sup_{\omega \in \Omega} d_{\rm tr}(x,\omega)$, which 
is finite since $\Omega$ is compact, observe that
\[
    r(x) := \sup_{\omega \in \Omega} d_{\rm tr}(x,\omega)
    \leq d_{\rm tr}(x,\omega^*) + 
    \sup_{\omega \in \Omega} d_{\rm tr}(\omega,\omega^*).
\]
Since $\Omega$ is compact, the second term is finite and hence 
$r(X)$ is integrable, since $d_{\rm tr}(X,\omega^*)$ is
integrable. All conditions of the theorem are satisfied so 
$Q(\omega) = \mathbb{E}(d_{\rm tr}(x,\omega))$ is continuous 
with respect to $\omega$ and 
\begin{equation*}
        \sup_{\omega \in \Omega} 
        |Q_n(\omega) - Q(\omega)| 
        \overset{p}{\to}
        0 \text{ as }
        n \to \infty, 
\end{equation*}
where $Q_n(\omega) = n^{-1} \sum_{i=1}^n d_{\rm tr}(X_i,\omega)$. 
Since $Q(\omega)$ has a unique minimum at $\omega^*$, all 
conditions of Theorem 
\ref{thm:convergence_of_theta_n} are satisfied and so 
$\Tilde{\omega}_n \to \omega^*$ as $n \to \infty$.
\end{proof}

\begin{proof}[{\bf Proof of Proposition \ref{prop:vanishing_gradient}}]
\begin{enumerate}[i.]
    \item If $\omega - X_i$ has a unique maximum 
    $M_i = \argmax_j\{\omega_j - (X_i)_j\}$ and 
    unique minimum 
    $m_i = \argmin_j\{\omega_j - (X_i)_j\}$, then the 
    gradient is
    \begin{equation}
        \label{eq:gradient_f}
        (\nabla f(x))_j = |\{i: M_i=j\}| - |\{i: m_i=j\}|.
    \end{equation}
    For the converse, assume that the gradient is 
    well-defined. From equations 
    \eqref{eq:dtr_pos}--\eqref{eq:dtr_neg} and following the 
    first few sentences of Lemma 
    \ref{lemma:uniqueness_of_optimum}
    \[d_{\rm tr}(x+\epsilon E_j,y) +
    d_{\rm tr}(x-\epsilon E_j,y) - 
    2d_{\rm tr}(x,y) = \epsilon s_j(x-y), 
    \]
    where $s_j$ is defined in equation \eqref{eq:s_i} of 
    Lemma \ref{lemma:uniqueness_of_optimum}.
    Consequently,
    \[
    f(x+\epsilon E_j) + f(x- \epsilon E_j) -
    2f(x) = \epsilon \sum_{i=1}^n s_j(X_i - \omega_i)
    \]
    Since $f$ has a well-defined gradient, 
    $\sum_{i=1}^n s_j(X_i - \omega) = 0$
    i.e. $s_j(X_i - \omega) = 0$ for all 
    $(i,j) \in [n] \times [e]$. This can only happen iff 
    $X_i - \omega$ has unique maximum and minimum component 
    for all $i \in [n]$.
    \item Using equation \eqref{eq:gradient_f}, the gradient 
    of $f$ vanishes at $x=\omega$ if and only if
    \begin{equation}
        \label{eq:vanishing_gradient}
        |\{i: M_i=j\}| = |\{i: m_i=j\}|.
    \end{equation}
    Moreover,
    \begin{align*}
        f(\omega+v) &= \sum_{i=1}^n 
        \max_k \left\{ \omega_k - (X_i)_k + v_k \right\} - 
        \min_k \left\{ \omega_k - (X_i)_k + v_k \right\}\\ 
        & \geq 
        \sum_{i=1}^n 
        \omega_{M_i} - (X_i)_{M_i} + v_{M_i} - 
        \omega_{m_i} + (X_i)_{m_i} - v_{m_i} \\ 
        & = 
        f(\omega) + \sum_{i=1}^n v_{M_i} - v_{m_i}
    \end{align*}
    Finally, note that because of equation 
    \eqref{eq:vanishing_gradient}, 
    \begin{align*}
        \sum_{i=1}^n v_{M_i} &= \sum_{j=1}^e 
        v_j |\{i \in [n]:M_i=j\}| \\ &
        \overset{\eqref{eq:vanishing_gradient}}{=} 
        \sum_{j=1}^e 
        v_j |\{i \in [n]:m_i=j\}| = 
        \sum_{i=1}^n v_{m_i},
    \end{align*}
    and so $f(\omega+v) \geq f(\omega)$ for all 
    $v \in \trtor$. 
\end{enumerate}

\end{proof}



\section{Space of ultrametrics} \label{sec:ult_thm}
\begin{theorem}[explained in \cite{AK,10.1093/bioinformatics/btaa564}] \label{thm}
Suppose we have a {\em classical} linear subspace $L_m \subset \RR^e$ defined by the linear
equations $x_{ij} - x_{ik} + x_{jk}=0$ for $1\leq i < j <k \leq m$. Let $\mbox{Trop}(L_m)\subseteq \RR^e/\RR {\bf 1}$ be the {\em tropicalization} of the linear space $L_m \subset \RR^e$, that is, 
classical operators are replaced by tropical ones (defined in Section \ref{Sec:tro:inner:prod} in the supplement)
in the equations defining the linear subspace $L_m$, so that all points $(v_{12},v_{13},\ldots, v_{m-1,m})$ in $\Trop(L_m)$ satisfy the condition that 
\[
\max_{i,j,k\in [m]}\{v_{ij},v_{ik},v_{jk}\}.
\]
is attained at least twice.
Then, the image of $\Tn$ inside of the tropical projective torus $\RR^e/\RR {\bf 1}$ is equal to $\Trop(L_m)$. 
\end{theorem}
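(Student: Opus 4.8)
The plan is to unwind the definition of $\Trop(L_m)$ and then establish two inclusions. By construction, a class $v = (v_{12},\dots,v_{m-1,m}) \in \RR^e/\RR{\bf 1}$ lies in $\Trop(L_m)$ exactly when, for every triple $i<j<k$, the tropicalization of the defining equation $x_{ij}-x_{ik}+x_{jk}=0$ holds, i.e.\ $\max\{v_{ij},v_{ik},v_{jk}\}$ is attained at least twice --- tropically only the support $\{ij,ik,jk\}$ of each linear form matters, not the signs of its coefficients. This is exactly the three-point condition characterizing ultrametrics. Hence the inclusion of the image of $\mathcal{U}_m$ into $\Trop(L_m)$ is immediate: if $D$ is an ultrametric then, by its defining property (equivalently by Theorem~\ref{thm:3pt}), its vectorization satisfies the three-point condition for every triple, so its class in $\RR^e/\RR{\bf 1}$ satisfies all of the tropicalized equations.

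For the reverse inclusion, given $v \in \Trop(L_m)$ I would take the representative $w = v + c{\bf 1}$ with $c > -\min_{p<q} v_{pq}$, so that the associated symmetric matrix has zero diagonal and strictly positive off-diagonal entries. The key observation is that the three-point condition is invariant under adding a constant to all coordinates, since $\max\{w_{ij},w_{ik},w_{jk}\}$ is attained at the same indices as $\max\{v_{ij},v_{ik},v_{jk}\}$; thus $w$ still satisfies it for every triple and is therefore a genuine ultrametric, with $w \in \mathcal{U}_m$ mapping to $v$ in the tropical projective torus. Combining the two inclusions yields the claimed equality, and by Theorem~\ref{thm:3pt} each such $w$ is in fact the distance matrix of an equidistant tree, which is what legitimizes identifying $\mathcal{U}_m$ with the space of phylogenetic trees on $[m]$.

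The only real subtlety --- and the step I would treat most carefully --- is the bookkeeping around the quotient by $\RR{\bf 1}$: one must invoke the translation-invariance of the three-point condition to normalize an arbitrary class of $\Trop(L_m)$ to a representative with nonnegative (indeed positive off-diagonal) entries, so that it corresponds to an honest ultrametric and not merely to a vector satisfying the combinatorial inequalities. It is also worth remarking that, although $\Trop(L_m)$ has been described here directly via the generators $x_{ij}-x_{ik}+x_{jk}$, this agrees with the intrinsic tropicalization of the variety $L_m$ because these three-term relations form a tropical basis of the ideal $I(L_m)$; this can be cited from \cite{AK,SS} but is not needed for the argument above.
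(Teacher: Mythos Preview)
Your proof is correct and is essentially the standard argument for this equivalence. Note, however, that the paper does \emph{not} give its own proof of this theorem: the statement is placed in the supplement (Section~\ref{sec:ult_thm}) with the annotation ``explained in \cite{AK,10.1093/bioinformatics/btaa564}'' and no accompanying proof environment, so there is nothing in the paper to compare your argument against beyond the references. Your write-up supplies exactly the argument one finds in those sources: unwind the tropicalized three-term relations to the three-point condition, observe that this is precisely the paper's Definition of an ultrametric, and handle the quotient by $\RR{\bf 1}$ via a positive shift, which is legitimate because the three-point condition is invariant under adding a constant to all coordinates. Your closing remark that the three-term relations form a tropical basis of $I(L_m)$ (so that the generator-wise description of $\Trop(L_m)$ agrees with the intrinsic tropicalization of the variety) is the only point that genuinely requires an external citation, and you correctly flag it as such.
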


\section{Tropical Arithmetics and Tropical Inner Product}\label{Sec:tro:inner:prod}

In tropical geometry, addition and multiplication are different than
regular arithmetic. 
The arithmetic operations are performed
in the
max-plus tropical semiring $(\,\mathbb{R} \cup \{-\infty\},\oplus,\odot)\,$
as defined in \cite{pin1998tropical}. 
\begin{definition}[Tropical Arithmetic Operations]
  In the tropical semiring, the basic tropical
  arithmetic operations of addition and multiplication are defined as:
  $$a \oplus b := \max\{a, b\}, ~~~~ a \odot b := a + b, ~~~~\mbox{  where } a, b \in \mathbb{R}\cup\{-\infty\}.$$
  The element $-\infty$ ought to be included as it is the 
  identity element of tropical addition.
  Tropical subtraction is not well-defined and tropical division is classical subtraction.
\end{definition}

The following definitions are necessary for the definition of the tropical inner product
\begin{definition}[Tropical Scalar Multiplication and Vector Addition]
  For any scalars $a,b \in \mathbb{R}\cup \{-\infty\}$ and for any vectors 
  $v,w \in (\mathbb{R}\cup\{-\infty\})^e$, 
  where $e \in \mathbb{N}$, 
    \begin{align*}
        a \odot & v:= (a + v_1,  \ldots ,a + v_e), \\ 
        a \odot v  \oplus b \odot w &:= (\max\{a+v_1,b+w_1\}, \ldots, \max\{a+v_e,b+w_e\}).
    \end{align*}
\end{definition}
From the definitions above, it follows that the tropical inner product is
$\omega^T \odot x = \max \{ \omega + x \}$ 
for all vectors $\omega,x \in \trtor$.
In classical logistic regression a linear function in the form of a classical inner product 
$h_{\omega}(x) = \omega^T x, \, \omega \in \R^n $ is used.
The tropical symbolic equivalent is 
\begin{equation}
    \label{eq:h_omega_trop}
    h_{\omega}(x) = \omega^T \odot x = \max_{l \in [e]} \{ \omega_l + x_l \}.
\end{equation}
This expression is not well-defined, since the statistical parameter and covariate
vectors $\omega, u \in \mathbb{R}^e / \mathbb R {\rm 1} $ are only defined up to 
addition of a scalar multiple of the vector $(1,\dots,1)$.
To resolve this issue, we fix 
\begin{equation}
    \label{eq:fix_min}
    - \min_{l \in [e]} \{ \omega_l + x_l  \} = c, 
\end{equation}  
where $c \in \R$ is a constant for all observations. 
Combining equations \eqref{eq:fix_min}, \eqref{eq:h_omega_trop}, and the definition 
of tropical distance \eqref{eq:tropmetric}, 
\begin{equation*}
    h_{\omega}(x) = d_{\rm tr}(x,-\omega) - c. 
\end{equation*}
For simplicity, under the transformation $-\omega \to \omega$ the expression becomes
\begin{equation}
    \label{eq:h_omega}
    h_{\omega}(x) = d_{\rm tr}(x,\omega) - c. 
\end{equation}

\section{Tropical Logistic Regression Algorithm} \label{sec:alg}

\begin{algorithm}
\caption{One-species tropical logistic regression}\label{alg:one}
\begin{algorithmic}
\State {\bf Input: } distance matrix $D \in \mathbb{R}_+^{N \times e}$, labels $Y \in \{0,1\}^{N}$ 
\State $\Tilde{\omega} = {\rm FW\_point}(D)$
\State $\Hat{\sigma}_0, \Hat{\sigma}_1 = \argmax_{\sigma_0,\sigma_1 > 0} l(\Tilde{\omega},\sigma_0,\sigma_1)| D,Y)$ with root solving.
\State {\bf Output: } $(\Tilde{\omega},\Hat{\sigma}_0,\Hat{\sigma}_0 )$
\end{algorithmic}
\end{algorithm}

\begin{algorithm}
\caption{Two-species tropical logistic regression}\label{alg:two}
\begin{algorithmic}
\State {\bf Input: } distance matrix $D \in \mathbb{R}_+^{N \times e}$, labels $Y \in \{0,1\}^{N}$ 
\State $\Tilde{\omega}_0 = {\rm FW\_point}(D[Y==0])$
\State $\Tilde{\omega}_1 = {\rm FW\_point}(D[Y==1])$
\State $\Hat{\sigma} = \argmax_{\sigma > 0} l(\Tilde{\omega}_0,\Tilde{\omega}_1,\sigma| D,Y)$ with root solving.
\State {\bf Output: } $(\Tilde{\omega}_0,\Tilde{\omega}_1,\Hat{\sigma})$
\end{algorithmic}
\end{algorithm}

\section{Fermat-Weber Point Visualization} \label{sec:fw_point_illustration}
As noted in Section \ref{sec:optimization}, the
gradient method is much faster than 
linear programming. Unfortunately, there is no guarantee
that it will guide us to a Fermat-Weber point. 
However, in practice, the gradient method tends to work 
well. Figure \ref{fig:fw_vis} illustrates just that.
Given, ten datapoint $X_1, \dots, X_{10} \in 
\mathbb{R}^3/\mathbb{R}{\bf 1} \cong \mathbb{R}^2$,
the Fermat-Weber set is found to be a trapezoid.  
This is in agreement with 
\cite{lin2018tropical}, which 
states that all Fermat-Weber sets are 
classical polytopes. The two-dimensional gradient 
vector, plotted as a vector field in Figure 
\ref{fig:fw_vis}, 
always points towards the Fermat-Weber set. 
Therefore, the gradient algorithm should always guide us
to a Fermat-Weber point. 
\begin{figure}[h]
    \centering
    \includegraphics[width=.9\linewidth]{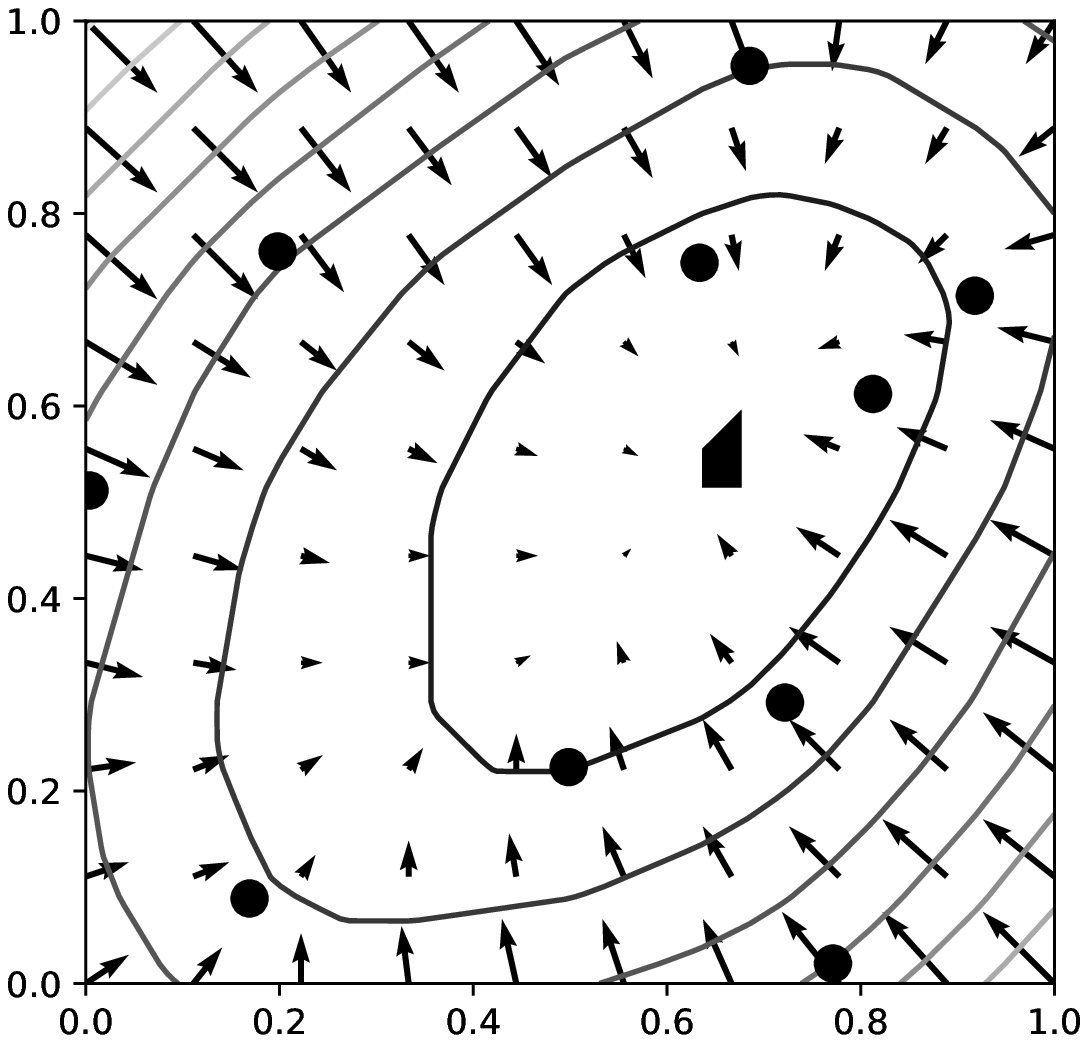}
    \caption{Visualization of the function 
    $f(\omega)= \sum_{i=1}^{10} d_{\rm tr}(X_i,\omega)$
    for $X_i$. The black circles are
    the datapoints $X_1, \dots, X_{10}$, 
    the solid lines are contours of $f$, the vector
    field is the gradient and the small black trapezoid
    at $(0.65,0.55)$ is the Fermat-Weber set.
    }
    \label{fig:fw_vis}
\end{figure}
\section{MLE Estimator for $\sigma$}
If $Z_i \overset{\rm iid}{\sim} {\rm Gamma}(n, k)$,  
where $n$ is constant and $k$ is a statistical parameter, then 
it is well-known that
the maximum likelihood estimator is 
\begin{equation}
    \label{eq:bar_k}
    \Hat{k} = \Bar{Z}/n,
\end{equation}
where $\Bar{Z}$ is the sample average. 
In our case $Z_i = d(X_i,\omega^*)$ and $k= i \sigma^i$.
From Proposition \ref{prop:tropical_radius_dist}, 
$Z_i \sim {\rm Gamma}(n/i,i\sigma^i)$ 
and by substituting these parameters in equation \ref{eq:bar_k},
it follows that the MLE for $\sigma$ is
\[
    \Hat{\sigma}^i = \Bar{Z} / n,
\]
where $\Bar{Z}$ is the average distance of the covariates (gene trees) 
from their mean (species tree). 
This results holds for all $i \in \mathbb{N}$ and both Euclidean and 
tropical metrics. The only difference is that 
for Euclidean spaces $X \in \mathbb{R}^e$ and so $n=e$, while for 
the tropical projective torus $\trtor$, $n=e-1$.
\section{Approximate BHV Logistic Regression}
Similar to the tropical Laplace distribution, in \cite{BHV} the following 
distribution was considered
\begin{equation*}
    f_{\lambda,\omega}(x) = K_{\lambda,\omega} \exp\left(-\lambda d_{\rm BHV}(x,\omega) \right),
\end{equation*}
where $\lambda = 1/\sigma$ is a concentration/precision parameter,
$d_{\rm BHV}$ is the BHV metric and $K_{\lambda,\omega}$ is the normalization 
constant that depends on $\lambda$ and $\omega$.
We consider an adaptation of the two-species model for this metric, where 
the data from the two classes have the same concentration rate but different 
centre.
If $X|Y \sim f_{\lambda,\omega^*_Y}$, then 
\begin{equation}
    \label{eq:bhv_logistic_regression}
    h_{\omega_0,\omega_1}(x) =
    \lambda \left(
        d_{\rm BHV}(x,\omega_0^*) -  d_{\rm BHV}(x,\omega_1^*)
    \right) + 
    \log{\frac{K_{\lambda,\omega^*_0 }}{K_{\lambda,\omega^*_1}}}.
\end{equation}
Unlike in the tropical projective torus or the euclidean space, in the BHV space
$K_{\lambda,\omega^*_0 } \neq K_{\lambda,\omega^*_1}$, because the space
is not translation-invariant.
However, if we assume that the two centres are far away from trees with 
bordering topologies, it may be assumed that the trees are mostly distributed 
in the Euclidean space and as a result 
$K_{\lambda,\omega^*_0 } \approx K_{\lambda,\omega^*_1}$. 
Under this assumption, equation \eqref{eq:bhv_logistic_regression}
becomes
\begin{equation*}
    h_{\omega_0,\omega_1}(x) \approx
    \lambda \left(
        d_{\rm BHV}(x,\omega_0^*) -  d_{\rm BHV}(x,\omega_1^*)
    \right). 
\end{equation*}
Therefore, the classification/decision boundary for the BHV is the 
BHV bisector $d_{\rm BHV}(x,\omega_0^*) =  d_{\rm BHV}(x,\omega_1^*)$
and the most sensible classifier is 
\begin{equation*}
    C(x) = \mathbb{I}\left(d_{\rm BHV}(x,\omega_0^*) >  
    d_{\rm BHV}(x,\omega_1^*)\right),
\end{equation*}
where $ \mathbb{I}$ is the indicator function.

\section{Graphs for Simulated Data under the Multi-Species Coalescent Model for different $R$}
\label{sec:coalescent_graphs}
\begin{figure}[h]
    \centering
    \includegraphics[width=.49\linewidth]
    {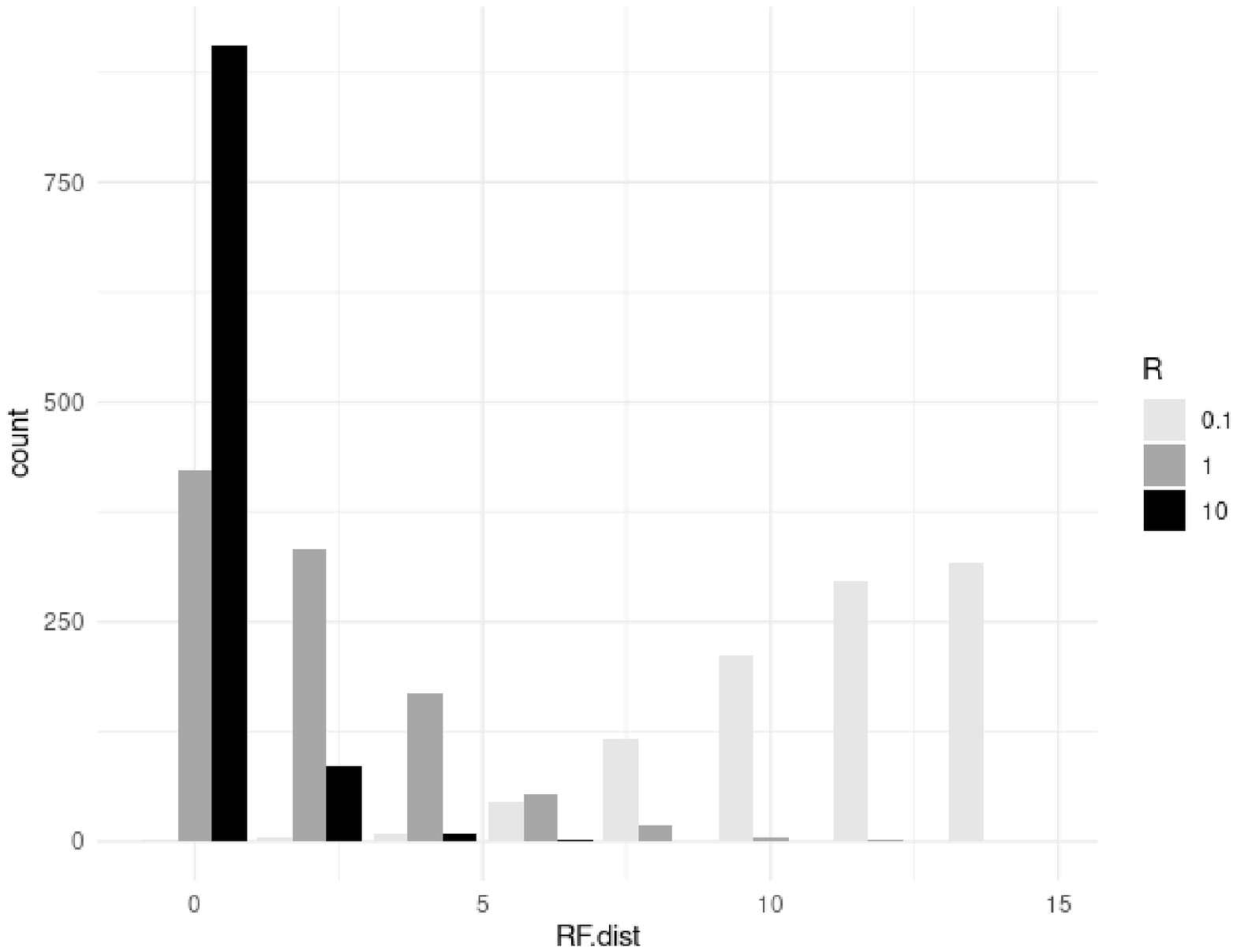}
    \includegraphics[width=.49\linewidth]
    {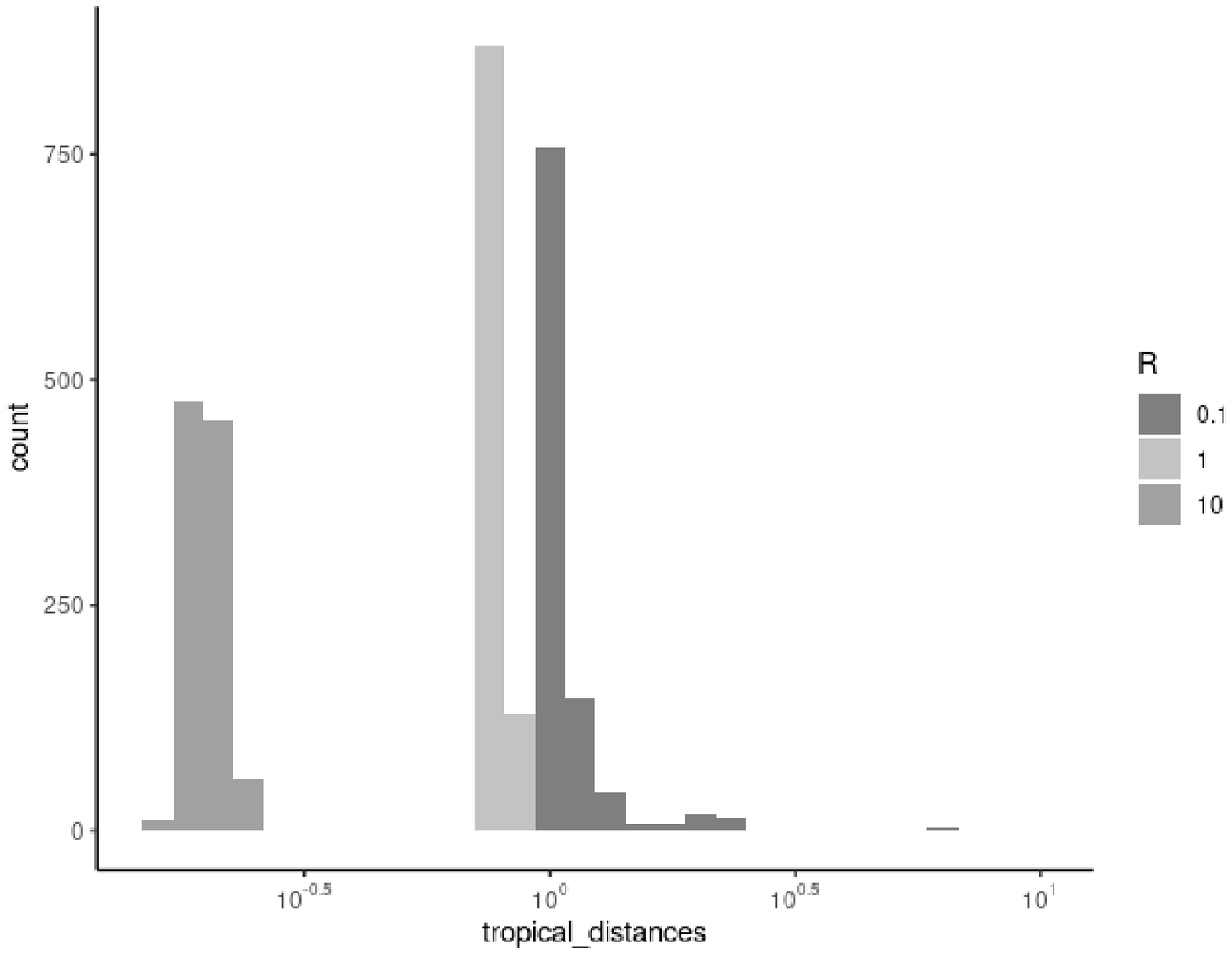}
    \caption{(left) Robinson-Foulds distances 
    and (right) tropical distances of inferred species
    trees $\Hat{\omega}$ from the 
    actual species trees $\omega^*$ for 
    $R=0.1,1,10$.}
    \label{fig:dist_hists}
\end{figure}

\begin{figure}
    \centering
    \includegraphics[width=.7\linewidth]{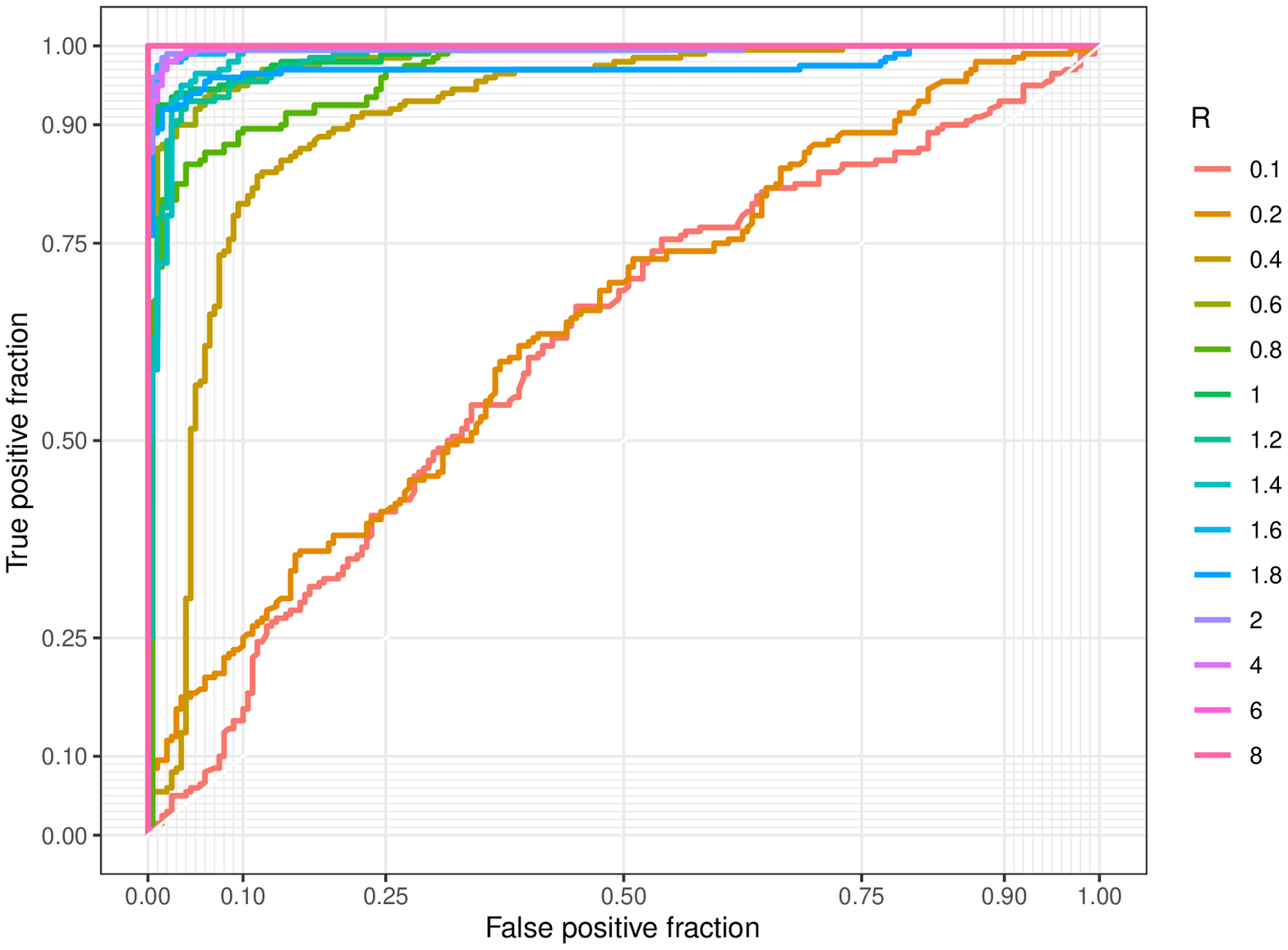}
    \caption{ROC curves for the tropical logistic regression with different values of $R$.  Higher the value of $R$ is the closer an estimated ROC curve for the tropical logistic regression model gets to the point $(0, 1)$. }
    \label{fig:rocs}
\end{figure}

\bibliographystyle{plain}
\bibliography{references}


\end{document}